    \theoremstyle{plain}
    \newtheorem{theorem}{Theorem}
    \newtheorem{corollary}[theorem]{Corollary}
    \newtheorem{lemma}[theorem]{Lemma}
    \newtheorem{proposition}[theorem]{Proposition}
    \theoremstyle{definition}
    \newtheorem{remark}[theorem]{Remark}
    \newtheorem*{remark*}{Remark}
    \newcommand{\rd}{\mathbb R^d}
    \newcommand{\R}{\mathbb R}
    \newcommand{\Z}{\mathbb Z}
    \newcommand{\pr}{\mathbf P}
    \newcommand{\e}{\mathbf E}
\begin{document}

 \title[Random walks in half space]
 {Green function for an asymptotically 
 stable random walk in a half space}
 \thanks{
        D. Denisov was supported by a Leverhulme Trust Research Project Grant  RPG-2021-105. 
        V. Wachtel was supported by DFG
    }
 \author[Denisov]{Denis Denisov}
 \address{Department of Mathematics, University of Manchester, Oxford Road, Manchester M13 9PL, UK}
 \email{denis.denisov@manchester.ac.uk}
 
\author[Wachtel]{Vitali Wachtel}
\address{Faculty of Mathematics, Bielefeld University, Germany}
\email{wachtel@math.uni-bielefeld.de}

    \begin{abstract}
        We consider an asymptotically stable  multidimensional random walk $S(n)=(S_1(n),\ldots, S_d(n) )$.
        Let  $\tau_x:=\min\{n>0: x_{1}+S_1(n)\le 0\}$  be the first time 
        the random walk $S(n)$ leaves the upper half-space. 
        We obtain  the asymptotics of $p_n(x,y):= \pr(x+S(n) \in y+\Delta, \tau_x>n)$ 
        as $n$ tends to infinity, where $\Delta$ is a fixed cube. 
        From that we obtain the local asymptotics for the Green function  $G(x,y):=\sum_n p_n(x,y)$, as 
        $|y|$ and/or $|x|$ tend to infinity.  
    \end{abstract}
    
    
    \keywords{Random walk, exit time, harmonic function, conditioned process}
    \subjclass{Primary 60G50; Secondary 60G40,60J45, 60F17}
    \maketitle

    \section{Introduction, main results and discussion}
    
\subsection{Notation and assumptions.}
 Consider a random walk $\{S(n),n\geq 0 \}$ on $\rd$, $d\geq1$, 
    where 
    \[
    S(n) =X(1)+\cdots+X(n), \quad n\ge 1
    \] 
    and $\{X(n), n\geq1\}$ are independent copies of a random vector   
    $X=(X_1,\ldots,X_d)$. 
    For  
    $x=(x_1, \ldots, x_d)$ 
    in the (non-negative) half space, that is for $x_1\ge 0$,
    let 
    \[
        \tau_x:=\min\{\, n\ge 1 : x+S(n)\notin \mathbb H^+ \,\}
        =\min\{\, n\ge 1 : x_1+S_1(n)\le 0 \,\}
    \]
be the first time the random walk exits the (positive) half space
\[
\mathbb H^+=\{(x_1,\ldots, x_d): x_1> 0\}.
\]
When $x=0$ we will omit the subscript and write 
\[
    \tau:=\tau_0=\min\{\, n\ge 1 : S(n)\notin \mathbb H^+  \,\}
    =\min\{\, n\ge 1 : S_1(n)\le 0 \,\}. 
\]
In  this paper we study the asymptotic, as $n\to\infty$, behaviour of the probability
\begin{equation}\label{eq:green.n}
    p_n(x,y):=\pr(x+S(n)\in y+\Delta,\tau_x>n) 
\end{equation}
and the Green function 
\[
    G(x, y) := \sum_{n=0}^\infty p_n(x,y).
\]
Here and throughout we denote $\Delta=[0,1)^d$ and 
    for $y=(y_1,\ldots,y_d)$, 
    \[
        y+\Delta = [y_1,y_1+1)\times [y_2,y_2+1)\times\cdots \times[y_d,y_d+1).
    \]

In this paper we will mostly concentrate on the case when the random walk
$S(n)$ has infinite second moments. 
More precisely, we shall assume that $S(n)$ is asymptotically stable of index $\alpha<2$ 
when we study large deviations for local probabilities 
and asymptotics for the Green function. 
The asymptotics for the Green function of walks with finite variances have already been  studied in the literature: (a) Uchiyama~\cite{U2014} has considered lattice walks in a half space; (b) Duraj et al~\cite{DRTW18} have derived asymptotics of Green functions for a wider class of convex cones.
It is worth mentioning that the authors of~\cite{DRTW18} analyse first the case of a half space and use the estimates for the Green function for a half space in the subsequent analysis of convex cones. This fact underlines the importance of the case of half spaces. 
    
    We will say that $S(n)$ belongs to the domain of attraction of a multivariate stable law, if 
    \begin{equation}\label{eq:domain.attraction}
        \frac{S(n)}{c_n}\stackrel{d}\to \zeta_\alpha,
    \end{equation}
    where $\alpha\in(0,2]$ and $\zeta_\alpha$ has a  multivariate stable law of index $\alpha$. 
    Note that we assume that $S(n)$ is already centred. 
This does not restrict generality, when $\alpha\neq 1$, as one 
can subtract the mean for $\alpha>1$ 
and the centering is not needed for $\alpha<1$. 
This assumption excludes, however, some walks with $\alpha=1$ from consideration. 

Necessary and sufficient conditions for the convergence in \eqref{eq:domain.attraction} are given in~\cite{Rvacheva62}. 
    When $\alpha\in(0,2)$, the convergence will take placeif $\pr(|X|>t)$ is regularly varying of index $-\alpha$ and there exists  a measure $\sigma$ on the unit sphere  $\mathbb S^{d-1} $ such that
    \[
        \frac{\pr\left(|X|>x, \frac{X}{|X|}\in A\right)}{\pr(|X|>x)}\to \sigma(A),
       \quad x\to \infty, 
    \]   
    for any measurable $A$ on $\mathbb S^{d-1}$. 
    We will write $X\in \mathcal  D(d, \alpha,\sigma)$ 
    when \eqref{eq:domain.attraction} holds, 
    where $\sigma$ stands for the above measure on the unit sphere. 
    Let $g_{\alpha,\sigma}$ be the density of $\zeta_\alpha$.

    For $\alpha\in(0,2)$ we will assume that 
    $\sigma(\mathbb S^{d-1}\cap \mathbb H^+)>0$
    and $\sigma(\mathbb S^{d-1}\cap\mathbb H^-)>0$, 
    where $\mathbb H^- = \R^d\setminus \mathbb H^+$. 
    This  assumption implies that the first coordinate $X_1$ belongs to the one-dimensional domain of attraction. 
Moreover, let
\begin{equation*} 
\mathcal{A}:=\{0<\alpha <1;\,|\beta |<1\}\cup \{1<\alpha <2;|\beta |\leq
1\}\cup \{\alpha =1,\beta =0\}\cup \{\alpha =2,\beta =0\}
\end{equation*}%
be a subset in $\mathbb{R}^{2}.$ 
For $(\alpha ,\beta )\in \mathcal{A}$ and a 
random variable $X_1$ write $X_1\in \mathcal{D}\left( \alpha ,\beta \right) $ if
the distribution of $X_1$ belongs to the domain of attraction of a stable law
with characteristic function%
\begin{equation}
G_{\alpha ,\beta }\mathbb{(}t\mathbb{)}:=\exp \left\{ -c|t|^{\,\alpha
}\left( 1-i\beta \frac{t}{|t|}\tan \frac{\pi \alpha }{2}\right) \right\}
=\int_{-\infty }^{+\infty }e^{itu}g_{\alpha ,\beta }(u)du,\ c>0,  \label{std}
\end{equation}%
and, in addition, $\e[X]=0$ if this moment exists.
Then, $X_1\in \mathcal D(\alpha,\beta)$.

We will consider the case when $S_1(n)$ is oscillating, 
that is when $\pr(\tau<\infty)=1$ and $\e[\tau]=\infty$. 
Recall 
that the random walk $S_1(n)$ oscillates if and only if 
\begin{equation*}
    \sum_{n=1}^{\infty }\frac{1}{n}
    \mathbf{P}(S_1(n)>0)
    =\sum_{n=1}^{\infty }\frac{%
    1}{n}\mathbf{P}(S_{1}(n)\leq 0)=\infty .
\end{equation*}%
Rogozin~\cite{Rog71} investigated properties of $\tau$ and demonstrated
that the Spitzer condition
\begin{equation}
    n^{-1}\sum_{k=1}^{n}
    \mathbf{P}\left( S_1(k)>0\right) \rightarrow \rho \in
\left( 0,1\right) \quad \text{as }n\rightarrow \infty  \label{Spit}
\end{equation}%
holds if and only if $\tau$ belongs to the domain of attraction of a
positive stable law with parameter $\rho $. 
In particular, if $X_1\in \mathcal{%
D}(\alpha ,\beta )$ then (see,$\ $\ for instance, \cite{Zol57}) condition (%
\ref{Spit}) holds with
\begin{align}
\nonumber
\displaystyle\rho
&:=\sigma(\mathbb{H^+}\cap\mathbb{S}^{d-1})\\
&=\int_{0}^{\infty }g_{\alpha ,\beta }(u)du=\left\{
\begin{array}{ll}
\frac{1}{2},\ \alpha =1, &  \\
\frac{1}{2}+\frac{1}{\pi \alpha }\arctan \left( \beta \tan \frac{\pi \alpha
}{2}\right) ,\text{ otherwise}. &
\end{array}%
\right.  \label{ro}
\end{align}

We will further assume that the Spitzer-Doney condition holds 
\begin{equation}\label{eq:Doney-Spitzer}
    \pr(S_1(n)>0)\to \rho,\quad n\to\infty, 
\end{equation}
which is known to be equivalent to~\eqref{Spit}.  

The scaling sequence $\{c_n\}$  can be defined as follows, 
see~\cite{Rvacheva62}.  
Denote $\mathbb{Z}:=\left\{ 0,\pm 1,\pm 2,..\right\} ,$ $\mathbb{Z}%
_{+}:=\left\{ 1,2,..\right\} $ and let $\left\{ c_{n},n\geq 1\right\} $ be a
sequence of positive numbers specified by the relation%
\begin{equation}
c_{n}:=\inf \left\{ u\geq 0:\mu (u)\leq n^{-1}\right\} ,  \label{Defa}
\end{equation}%
where
\begin{equation*}
\mu (u):=\frac{1}{u^{2}}\int_{-u}^{u}x^{2}\mathbf{P}(|X|\in dx).
\end{equation*}%
It is known (see, for instance,~\cite[Ch. XVII, \S 5]{FE}) that for every $%
X\in \mathcal{D}(d,\alpha ,\sigma )$ the function $\mu (u)$ is regularly
varying with index $(-\alpha )$. This implies that $\left\{ c_{n},n\geq
1\right\} $ is a regularly varying sequence with index $\alpha ^{-1}$, i.e.
there exists a function $l_{1}(n),$ slowly varying at infinity, such that
\begin{equation}
c_{n}=n^{1/\alpha }l_{1}(n).  \label{asyma}
\end{equation}%
Then, convergence~\eqref{eq:domain.attraction} 
holds with this sequence $\{c_n\}$. 
In addition, the scaled sequence 
$\frac{S_1(n)}{c_{n}}$
converges in distribution, as $n\rightarrow\infty ,$ to the stable law
given by (\ref{std}). 

In one-dimensional case the study of asymptotics~\eqref{eq:green.n} was initiated  in~\cite{VW2009}, 
where  normal and small deviations of $p_n(0,y)$ were considered. 
Asymptotics for $p_n(x,y)$ with a 
 general starting point $x$ was studied then in~\cite{CaravennaChaumont2013} and~\cite{Doney2012}. 
Our assumption on $X_{1}$ is the same as in these papers and 
we use a similar approach for small and normal deviations. 
We used a different approach to study 
large deviations  in the 
multidimensional case. 
Large deviations seem to be the most complicated part 
of the present paper.

    As the first coordinate plays a distinctive role, 
    we will adopt the following notation. 
    For  $X(n)$ we will write $X(n)=(X_{1}(n),X_{(2,d)}(n))$, where $X_{1}(n)$ 
    corresponds to the first coordinate and $X_{(2,d)}(n)$ corresponds to  the remaining coordinates. 
    Similarly  we write 
    $S(n)=(S_{1}(n),S_{(2,d)}(n)), n=0,2,\ldots$.

    The following conditional limit theorem will be crucial for the rest of this
    article. The weak convergence in this theorem can be proved 
    similarly to~\cite{Doney1985}.  Existence 
    of the density is shown in Theorem~\ref{NormalDev'} below, 
    but can also be found similarly to  Remark~2 in~\cite{VW2009}. 
       \begin{theorem}
    \label{Cru} If
    $X\in \mathcal{D}(d,\alpha ,\sigma)$, then there exists a
     random vector $M_{\alpha ,\sigma}$ on $\mathbb H^+$ with density $p_{M_{\alpha ,\sigma}}(v)$ such that, for all $v\in \R^d$,
    \begin{equation}
    \lim_{n\rightarrow \infty }\pr\left( \frac{S(n)}{c_{n}}\in u+\Delta \mid \tau>n\right) 
    =\mathbf{P}(M_{\alpha ,\sigma}\in u+\Delta)
    =\int_{u+\Delta}p_{M_{\alpha ,\sigma}}(v)dv.
    \label{meander1}
    \end{equation}
Moreover, for every bounded and continuous function $f$,
$$
\e\left[f\left(\frac{S(n)}{c_n}\right)|\tau_x>n\right]
\to \e[f(M_{\alpha,\sigma})]
$$
uniformly in $x$ with $x_1\le\delta_nc_n$, $\delta_n\to0$.
    \end{theorem}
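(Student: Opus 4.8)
The plan is to establish the weak convergence in \eqref{meander1} first, and then upgrade it to convergence of expectations of bounded continuous functionals, uniformly over starting points $x$ with $x_1\le\delta_nc_n$. For the weak convergence one follows Doney's argument~\cite{Doney1985}: the key input is the asymptotics of $\pr(\tau>n)$, which under the Spitzer--Doney condition \eqref{eq:Doney-Spitzer} behaves like $n^{\rho-1}\ell(n)$ for a slowly varying $\ell$, together with a tightness argument for the law of $S(n)/c_n$ conditioned on $\{\tau>n\}$. Tightness in the radial direction follows from the asymptotic stability of $S(n)$ itself (so $|S(n)|/c_n$ is tight even without conditioning, and conditioning on an event of polynomially small probability does not destroy this), while non-degeneracy of the limit on $\mathbb H^+$ comes from the fact that $\sigma(\mathbb S^{d-1}\cap\mathbb H^+)>0$. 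Identification of the limit $M_{\alpha,\sigma}$ is done via the finite-dimensional convergence of the conditioned process $(S(\lfloor nt\rfloor)/c_n)_{t\in[0,1]}$ to the stable meander, whose terminal value has the law of $M_{\alpha,\sigma}$; existence of the density $p_{M_{\alpha,\sigma}}$ is supplied by Theorem~\ref{NormalDev'} (or the argument of Remark~2 in~\cite{VW2009}), so no separate work is needed here.

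The substantive new content is the uniformity statement. Fix a bounded continuous $f$ with $\|f\|_\infty\le1$; by a standard approximation it suffices to treat Lipschitz $f$. The strategy is a coupling/decomposition at an intermediate time: write $n=m+k$ with $m=m_n\to\infty$ but $m_n=o(n)$, condition on $S(m)$ and use the Markov property, so that
\begin{equation*}
\e\left[f\left(\tfrac{S(n)}{c_n}\right)\mid\tau_x>n\right]
=\frac{\e\left[\ind\{\tau_x>m\}\,\e_{x+S(m)}\left[f\left(\tfrac{S(k)}{c_n}\right)\ind\{\tau>k\}\right]\right]}{\pr(\tau_x>n)}.
\end{equation*}
For the inner expectation, started from a point $z=x+S(m)$ with $z_1>0$, one applies the (non-uniform) convergence \eqref{meander1} together with the known asymptotics $\pr_z(\tau>k)\sim V(z)\,k^{\rho-1}\ell(k)$, where $V$ is the renewal-type harmonic function for the half-space, valid uniformly for $z_1=o(c_k)$. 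One then invokes the corresponding asymptotics of the numerator and denominator $\pr(\tau_x>n)\sim V(x)n^{\rho-1}\ell(n)$ — here the harmonicity identity $\e_x[V(x+S(m));\tau_x>m]=V(x)$ and the fact that $V(x)$ is bounded on the slab $x_1\le\delta_nc_n$ after suitable normalization are what make the ratio stable. The point of choosing $m_n\to\infty$ is that the conditioned walk $S(m)/c_m$ (or $S(m)$ conditioned to stay positive for time $m$) has, at time $m$, a first coordinate of order $c_m=o(c_n)$, so that from time $m$ onward the influence of the initial $x_1\le\delta_nc_n$ has been washed out and one is in the regime where the non-uniform theorem and the uniform renewal estimates apply; the contribution from the atypically large values of $x_1+S_1(m)$ is controlled by a crude bound on $\pr(\tau_x>m, |S(m)|>\varepsilon c_n)$ using asymptotic stability.

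The main obstacle is precisely this uniform control of the ratio on the slab $\{x_1\le\delta_nc_n\}$: one needs that $V(x)/V(0)$, or rather the relevant normalized harmonic function evaluated along the conditioned trajectory, converges and is bounded uniformly as $x_1$ ranges up to $\delta_nc_n$ with $\delta_n\to0$ — equivalently that the harmonic function $V$ has the right regularly varying behaviour and that $\pr(\tau_x>n)/\pr(\tau>n)$ stays bounded uniformly over this slab. This requires either citing sharp uniform estimates for $\pr_x(\tau_x>n)$ (of the type in~\cite{VW2009, CaravennaChaumont2013, Doney2012} in dimension one, extended to the marginal $S_1$ here) or proving them; given that the first coordinate $S_1(n)$ lies in the one-dimensional domain of attraction $\mathcal D(\alpha,\beta)$ and the remaining coordinates only enter through the bounded functional $f$, these one-dimensional fluctuation-theory estimates suffice, and the multidimensional part of the argument is genuinely soft (tightness plus the already-established convergence \eqref{meander1}).
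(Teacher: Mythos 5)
Your treatment of the $x=0$ weak convergence (follow Doney's argument, tightness from asymptotic stability, non-degeneracy from $\sigma(\mathbb S^{d-1}\cap\mathbb H^+)>0$, density supplied later) matches the paper, which likewise defers this to \cite{Doney1985}. The problem is in your uniformity argument, and it is a genuine gap rather than a stylistic difference: your Markov-property decomposition at an intermediate time $m=m_n\to\infty$, $m_n=o(n)$, requires you to evaluate the inner expectation $\e_{z}\bigl[f(S(k)/c_n)\ind\{\tau>k\}\bigr]$ at the random starting point $z=x+S(m)$, whose first coordinate is typically of order $c_m\to\infty$ (and can be anything up to $\varepsilon c_n$). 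Asymptotics for that quantity, uniformly over $z_1\le \delta'_k c_k$ with $\delta'_k\to0$, are exactly the statement of the theorem you are trying to prove; the ``non-uniform convergence \eqref{meander1}'' you invoke is only available for start at the origin, and pointwise convergence for each fixed $z$ does not give uniformity over a range of $z_1$ that grows with $n$. As written, the argument is circular.

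The paper breaks this circularity by decomposing not at a deterministic intermediate time but at the time $k$ of the running minimum of the first coordinate (the duality identity of Lemma~\ref{lem.arbitrary.starting point}): $p_n(x,dy)=\sum_{k}\int p_k^+(0,dz-x)\,p_{n-k}(0,dy-z)$. The crucial feature is that the post-minimum factor $p_{n-k}(0,\cdot)$ is the walk started at \emph{zero} killed at $\tau_0$, so the already-established $x=0$ convergence applies directly; the pre-minimum point $z$ enters only as a spatial shift of the test set, which is negligible once one shows $|S(k)|=o(c_n)$ on the relevant event (this is \eqref{eq:thm1.9}, proved by splitting $k\le R$ versus $k>R$ with $c_R\asymp x_1/\gamma$). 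Summing $\pr(S_1(k)\ge-x_1;\tau^+>k)$ over $k\le N_n$ then produces the factor $V(x_1)$ exactly, and the ranges $k\in(N_n,n/2]$ and $k\in(n/2,n]$ are shown to contribute $o(V(x_1)\pr(\tau>n))$ via Lemma~\ref{lem:lem20}. Your error-control step (bounding $\pr(\tau_x>m,\,|S(m)|>\varepsilon c_n)$) is the right instinct and corresponds to \eqref{eq:thm1.9}, but to make your proof non-circular you would need to replace the Markov decomposition at time $m$ by the decomposition at the minimum, or else supply an independent coupling argument showing that the walk conditioned to stay positive forgets a starting point of order $c_m$.
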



Our first result is an analogue of the classical local limit theorem, 
which is an extension of Theorems~3 and 5 in~\cite{VW2009} when $x=0$ 
and extends~\cite{Doney2012} and~\cite{CaravennaChaumont2013} 
for arbitrary starting point $x$.
\begin{theorem}\label{NormalDev'} 
    Suppose $X\in \mathcal{D}(d,\alpha ,\sigma)$. If the
distribution of $X$ is non-lattice then, for every $r >0$,
uniformly in $x$ with $x_1\le\delta_nc_n$, $\delta_n\to0$,
\begin{equation}
\sup_{y\in \mathbb{H}^+}\left|c_{n}^d\pr\left(x+S(n)\in y+r\Delta \mid \tau_x>n\right)-r^d {p_{M_{\alpha ,\sigma}}\left(\frac{y-x}{c_{n}}\right)}\right|\rightarrow 0.   \label{ND'}
\end{equation}
If the distribution of $X$ is lattice and if $\Z^d$ is the minimal lattice for $X$ then,
uniformly in $x\in \mathbb{H}^+\cap \Z^d$ with $x_1\le\delta_nc_n$, $\delta_n\to0$,
\begin{equation}
\sup_{y\in \mathbb{H}^+\cap \Z^d}\left|c_{n}^d\pr\left(x+S(n)= y \mid \tau_x>n\right)-{p_{M_{\alpha ,\sigma}}\left(\frac{y-x}{c_{n}}\right)}\right|\rightarrow 0.   \label{ND''}
\end{equation}
\end{theorem}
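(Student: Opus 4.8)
The plan is to reduce the conditional local limit theorem to the unconditional local limit theorem in the half space by a decomposition over an intermediate time $m=m(n)$ with $1\ll m\ll n$. Write, for the non-lattice case,
\[
\pr(x+S(n)\in y+r\Delta,\tau_x>n)
=\e\Bigl[\ind\{\tau_x>m\}\,\pr\bigl(z+S(n-m)\in y+r\Delta,\tau_z>n-m\bigr)\big|_{z=x+S(m)}\Bigr].
\]
On the event $\{\tau_x>m\}$ the first coordinate $z_1=x_1+S_1(m)$ is typically of order $c_m$, which is $o(c_n)$, so inside the inner probability the random walk of length $n-m\sim n$ starts at a point that is negligible on the scale $c_n$. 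If we had a local limit theorem of the form
\[
c_{n-m}^d\,\pr\bigl(z+S(n-m)\in y+r\Delta,\tau_z>n-m\bigr)\sim r^d\,V(z)\,h_n\Bigl(\frac{y}{c_{n-m}}\Bigr)
\]
valid uniformly for $z_1$ up to order $\delta c_n$ — with $V$ the positive harmonic function for the killed walk and $h_n$ a suitable profile converging to a density — then plugging this in and using that $\e[V(x+S(m))\ind\{\tau_x>m\}]$ captures the asymptotics of $\pr(\tau_x>m)$ as $m\to\infty$, together with the ratio $\pr(\tau_x>n)/\pr(\tau_x>m)\to$(the right constant), would give the claim. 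The self-referential flavour is resolved because the right-hand side profile $h_n$ is exactly what Theorem~\ref{Cru} identifies as $p_{M_{\alpha,\sigma}}$, and the harmonic function $V$ cancels against $\pr(\tau_x>n)$ in the conditional probability.

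Concretely I would proceed in the following steps. First, establish the statement for the fixed starting point $x=0$ (or $x$ in a bounded region): this is the analogue of Theorems 3 and 5 in \cite{VW2009} and of \cite{Doney2012}, \cite{CaravennaChaumont2013}, obtained by Fourier inversion in the last $d-1$ coordinates combined with the one-dimensional Caravenna–Chaumont–Doney local limit theorem for the first coordinate killed at $\tau$; the weak convergence input is Theorem~\ref{Cru}, and the existence of the density $p_{M_{\alpha,\sigma}}$ together with its continuity is extracted here (this is the claim ``existence of the density is shown in Theorem~\ref{NormalDev'}''). Second, for a general starting point $x$ with $x_1\le\delta_nc_n$, apply the Markov decomposition above at time $m=\varepsilon n$; on $\{\tau_x>m\}$ use Theorem~\ref{Cru} (its second, uniform-in-$x$ assertion) to control the law of $S(m)/c_m$, split the inner probability according to whether $z=x+S(m)$ lies in a ``good'' region (all coordinates $O(c_n)$, first coordinate $o(c_n)$) or a negligible one, and on the good region invoke the $x=0$-type result after a further shift. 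Third, let $\varepsilon\to0$ after $n\to\infty$ and use continuity of $p_{M_{\alpha,\sigma}}$ and the scaling $c_{n-m}/c_n\to1$ as $\varepsilon\to0$ to identify the limit; uniformity in $y$ comes from the uniformity already present in the $x=0$ statement and the tightness furnished by Theorem~\ref{Cru}. The lattice case is identical with Fourier inversion over the dual torus replacing the continuous inversion.

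The main obstacle is the uniformity over the starting point $x$ up to $x_1\le\delta_nc_n$: when $x_1$ is genuinely of order $c_n$ the inner walk no longer ``forgets'' its start and one cannot simply appeal to the $x=0$ result. Handling this requires the sharp two-sided bounds on $\pr(x+S(n)\in y+\Delta,\tau_x>n)$ uniform in $x$ (a Carne–Varopoulos / renewal-type estimate in the first coordinate, plus the truncation to cut off the heavy-tailed jumps responsible for the stable behaviour), and a careful argument that the contribution of starting points with $x_1$ comparable to $c_n$ is swept up into the harmonic function $V(x)$, which in turn must be shown to vary slowly enough on the $c_n$ scale. Controlling the error uniformly in $y$ in the heavy-tailed regime $\alpha<2$ — where a single big jump can move the walk across the whole relevant scale — is where most of the technical work lies; this is the ``large deviations'' difficulty flagged in the introduction, and it is handled by a one-big-jump decomposition isolating the maximal increment.
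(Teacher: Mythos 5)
There is a genuine gap in your passage from $x=0$ to a general starting point. Your decomposition conditions on the position $z=x+S(m)$ at a short time $m$ and leaves the inner factor $\pr\bigl(z+S(n-m)\in y+r\Delta,\ \tau_z>n-m\bigr)$, which is exactly an instance of the quantity being estimated, started from a point whose first coordinate is typically of order $c_m$ (and with your choice $m=\varepsilon n$ it is of order $\varepsilon^{1/\alpha}c_n$, hence not even $o(c_{n-m})$). You cannot ``invoke the $x=0$-type result after a further shift'': the killed walk is not shift-invariant in the first coordinate, because the constraint is $z_1+S_1(j)>0$, so the inner factor does not reduce to $b_{n-m}$. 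Appealing to the theorem itself for such $z$ is circular, and Theorem~\ref{Cru} supplies only integral (weak) convergence, which does not upgrade to a local statement for free. The paper avoids this by decomposing in the opposite way: Lemma~\ref{lem.arbitrary.starting point} splits the path at the minimum of the first coordinate and gives $p_n(x,dy)=\sum_{k}\int_{(0,x_1\wedge y_1]\times\R^{d-1}}p_k^+(0,dz-x)\,p_{n-k}(0,dy-z)$, where \emph{both} factors are anchored at the origin; the $x=0$ local limit theorem then applies directly to $b_{n-k}(y-z)$, the sum over $k$ reconstructs $V(x_1)$ via the renewal definition \eqref{ren.desc}, and this cancels against $\pr(\tau_x>n)\sim V(x_1)\pr(\tau>n)$. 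Some such duality identity (or the bounds of Lemma~\ref{lem:lem20} to kill the range $k\ge(1-\gamma)n$) is the missing ingredient; without it the induction over starting points never closes.

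A second, smaller issue is the base case $x=0$: Fourier inversion ``in the last $d-1$ coordinates combined with the one-dimensional local limit theorem for the first coordinate killed at $\tau$'' presupposes a factorization of the joint law of $(S_1(n),S_{(2,d)}(n))$ under the killing which is not available, since the coordinates are dependent. The paper instead uses the multidimensional Baxter--Spitzer recurrence (Lemma~\ref{lem:lema15}, identity \eqref{eq:40}), applying Stone's theorem to the unconditioned factor $\pr(S(k)\in y-z+\Delta,\,S_1(k)>0)$ and the weak convergence of Theorem~\ref{Cru} to $dB_{n-k}(z)$, and only then identifies the density $p_{M_{\alpha,\sigma}}=f(0,1;\cdot)$. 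Finally, note that the paper's second proof does use a forward Markov decomposition, but at $m=[(1-\gamma)n]$, so that the last $\gamma n$ steps are handled \emph{without} the killing via Stone's theorem plus the local large-deviation bounds available under the extra hypothesis \eqref{eq:local.heavy} --- the opposite orientation to yours, and it needs an assumption your sketch does not impose.
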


If the ratio $y/c_{n}$ varies with $n$ in such a way that $y_1/c_{n}\in
(b_{1},b_{2})$ for some $0<b_{1}<b_{2}<\infty $
and $|y_{(2,d)}|=O(c_n)$, we can rewrite (\ref{ND'}) 
as
\begin{equation*}
c_{n}^d\mathbf{P}(S(n)\in y+r\Delta \mid \tau>n)\sim r^d {%
p_{M_{\alpha,\sigma}}(y/c_{n})}\quad \text{as }n\rightarrow \infty .
\end{equation*}%
However, if $y_1/c_{n}\rightarrow 0$, then, in view of
\begin{equation*}
    \lim_{z_1\downarrow 0}{p_{M_{\alpha,\sigma}}(z)=0},
\end{equation*}%
relation (\ref{ND'}) gives only
\begin{equation}
c_{n}^d\pr(S(n)\in y+\Delta \mid \tau>n)=o\left( 1\right)
\quad \text{as }n\rightarrow \infty .  \label{Out1}
\end{equation}

Our next theorem refines (\ref{Out1}) in the mentioned domain of small
deviations, i.e. when $y_1/c_{n}\rightarrow 0.$ 
Let 
\[
    \tau^+:=\min\{k\ge1\colon S(k)\in\mathbb{H}^+\}=\min\{k\ge 1\colon S_{1}(k)>0\}. 
\]
Let $\chi ^{+}:=S_{1}(\tau ^{+}) (\chi ^{-}:=-S_{1}(\tau))$ be the first ascending(descending) ladder height and 
let $(\chi^+_n)_{n=1}^\infty ((\chi_n^-)_{n=1}^\infty )$ be  a sequence of i.i.d. copies of  $\chi^+ (\chi^-)$. 
Let 
\begin{align}
H(u):=\mathrm{I}\{u>0\}+\sum_{k=1}^{\infty }\mathbf{P}(\chi _{1}^{+}+\ldots
+\chi _{k}^{+}<u)  \label{ren}
\\ 
V(u):=\mathrm{I}\{u\ge 0\}+\sum_{k=1}^{\infty }\mathbf{P}(\chi _{1}^{-}+\ldots
+\chi _{k}^{-}\le u)\label{ren.desc}
\end{align}%
be the renewal function of the ascending (descending) ladder height process.
Clearly, $H$ is a left-continuous function.

\begin{theorem}\label{thm:small.deviations.arbitrary.lattice} 
    Suppose $X\in \mathcal{D}(d,\alpha ,\sigma)$. If the distribution of $X$ is lattice and if $\mathbb{Z}^d$ is the minimal lattice, then
    \[
        \pr(x+S(n)= y; \tau_x>n)\sim 
        V(x_1)H(y_1)\frac{g\left(0,\frac{y_{2,d}-x_{2,d}}{c_n}\right)}{nc_n^d}
        \]
        uniformly in $x,y \in \mathbb H^+\cap \mathbb Z^d$with $x_1,y_{1}\in (0,\delta _{n}c_{n}]$ such that $|x-y|\le A c_n $, where $\delta _{n}\rightarrow 0$
        as $n\rightarrow \infty $ and  $A$ is a fixed constant.
        
        If the
        distribution of $X$ is non-lattice then
            \[
                \pr(x+S(n)\in y+\Delta; \tau_x>n)\sim 
                V(x_1)\int_{y_1}^{y_1+1}H(u)du\frac{g\left(0,\frac{y_{2,d}-x_{2,d}}{c_n}\right)}{nc_n^d}
                \]
                uniformly in $x_1,y_{1}\in (0,\delta _{n}c_{n}]$ such that $|x-y|\le A c_n $, where $\delta _{n}\rightarrow 0$
                as $n\rightarrow \infty $ and  $A$ is a fixed constant. \end{theorem}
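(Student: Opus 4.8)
The plan is to cut the trajectory at the midpoint and combine Theorem~\ref{NormalDev'} (which governs the walk in the bulk, i.e.\ at first coordinate of order $c_n$, and is uniform down to starting points whose first coordinate is $o(c_n)$) with one–dimensional fluctuation theory for the first coordinate near the boundary. I describe the non–lattice case; the lattice case is identical with sums over $\mathbb Z^d$ in place of integrals over unit cubes and with $\int_{y_1}^{y_1+1}H(u)\,du$ replaced by $H(y_1)$. Put $N:=\lceil n/2\rceil$, $N':=n-N$, so $N\sim N'\sim n/2$ and $c_N\sim c_{N'}\sim 2^{-1/\alpha}c_n$. By the Markov property at time $N$ and a discretisation of the intermediate position into unit cubes, $p_n(x,y)=\sum_{z\in\mathbb Z^d}\int_{z+\Delta}\pr(x+S(N)\in dw,\tau_x>N)\,\pr(w+S(N')\in y+\Delta,\tau_w>N')$, and for $w\in z+\Delta$ the second factor may be replaced by its value $p_{N'}(z,y)$ at the corner at the cost of a uniformly negligible error, giving
\begin{equation*}
p_n(x,y)\approx\sum_{z\in\mathbb Z^d}p_N(x,z)\,p_{N'}(z,y).
\end{equation*}

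Next I would expand each one–sided factor. For the left factor, Theorem~\ref{NormalDev'} at time $N$ together with the uniform estimate $\pr(\tau_x>N)\sim V(x_1)\pr(\tau>N)$ for $x_1=o(c_N)$ (the one–dimensional input, available as in~\cite{VW2009} and valid in particular for $x_1\le\delta_nc_n$) gives
\begin{equation*}
p_N(x,z)=\bigl(1+o(1)\bigr)\,V(x_1)\,\pr(\tau>N)\,c_N^{-d}\,p_{M_{\alpha,\sigma}}\!\Bigl(\tfrac{z-x}{c_N}\Bigr)
\end{equation*}
uniformly over $z$ in a fixed region $\mathcal R_K:=\{c_n/K\le z_1,\ |z-x|\le Kc_n\}$, on which $p_{M_{\alpha,\sigma}}$ is bounded away from $0$. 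For the right factor I would reverse time on $[N,n]$: the reversed increments form a random walk $\hat S$, again asymptotically stable but with positivity parameter $1-\rho$, whose exit time $\hat\tau$ satisfies $\pr(\hat\tau_u>N')\sim H(u_1)\pr(\hat\tau>N')$ for $u_1=o(c_{N'})$ and whose conditional bulk behaviour is governed by a meander density $p_{\hat M}$; applying Theorem~\ref{NormalDev'} to $\hat S$ and undoing the reversal yields
\begin{equation*}
p_{N'}(z,y)=\bigl(1+o(1)\bigr)\Bigl(\int_{y_1}^{y_1+1}\!\!H(u)\,du\Bigr)\pr(\hat\tau>N')\,c_{N'}^{-d}\,p_{\hat M}\!\Bigl(\tfrac{z-y}{c_{N'}}\Bigr)
\end{equation*}
uniformly over $z\in\mathcal R_K$.

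Inserting both expansions, the part of the sum over $z\in\mathcal R_K$ equals $\bigl(1+o(1)\bigr)V(x_1)\bigl(\int_{y_1}^{y_1+1}H(u)\,du\bigr)\pr(\tau>N)\pr(\hat\tau>N')$ times $c_N^{-d}c_{N'}^{-d}\sum_{z\in\mathcal R_K}p_{M_{\alpha,\sigma}}\bigl(\tfrac{z-x}{c_N}\bigr)p_{\hat M}\bigl(\tfrac{z-y}{c_{N'}}\bigr)$, which is a Riemann sum of mesh $c_N^{-1}$ equal, because $x_1,y_1=o(c_n)$, to $(1+o(1))\,c_{N'}^{-d}F\bigl((x-y)_{(2,d)}/c_N\bigr)$, where $F(a):=\int_{\mathbb R^d}p_{M_{\alpha,\sigma}}(\eta_1,\eta_{(2,d)})\,p_{\hat M}(\eta_1,\eta_{(2,d)}+a)\,d\eta$. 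For the temporal factors, the Sparre–Andersen identity $\sum_{k=0}^m\pr(\tau>k)\pr(\hat\tau>m-k)\equiv1$ together with Karamata's theorem forces $\pr(\tau>m)\pr(\hat\tau>m)\sim\frac{\sin\pi\rho}{\pi m}$, hence $\pr(\tau>N)\pr(\hat\tau>N')\sim\frac{2\sin\pi\rho}{\pi n}$. Putting the pieces together, the assertion of the theorem follows once one establishes an identity of the form
\begin{equation*}
\frac{2^{1+d/\alpha}\sin\pi\rho}{\pi}\,F(a)=g_{\alpha,\sigma}\bigl(0,-2^{-1/\alpha}a\bigr),
\end{equation*}
the constants and scalings being dictated by the bookkeeping above; this identity expresses the concatenation of the stable meander and the reversed dual meander, both pinned at the hyperplane $\{z_1=0\}$, as the free stable density restricted to that hyperplane, and I would prove it either by direct computation from the explicit form of $p_{M_{\alpha,\sigma}}$ produced in the proof of Theorem~\ref{NormalDev'} and the $\alpha$–scaling relation for $g_{\alpha,\sigma}$, or via the It\^o excursion decomposition of the stable process at $\{z_1=0\}$; for $d=1$ it reduces to a known identity for one–dimensional stable meanders, which also fixes the constant.

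The genuine difficulty is not the bookkeeping above but showing that the contribution of the intermediate positions $z\notin\mathcal R_K$ is negligible as $K\to\infty$, uniformly in $n$ and in $x,y$. This requires uniform local upper bounds for the killed walk, of the form $p_N(x,z)\le C\,V(x_1)\pr(\tau>N)\,c_N^{-d}\,\psi\bigl((z-x)/c_N\bigr)$ with $\psi$ integrable and vanishing at the boundary hyperplane, and the analogous bound for $\hat S$; the excluded contribution is then controlled by the tails and the near–boundary behaviour of $\psi$. These uniform killed–walk local bounds, uniform in the starting point, are the technical core of the proof, and I would take them from the estimates developed elsewhere in the paper. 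Granting them, all the limit relations and the final asymptotic are uniform in $x_1,y_1\in(0,\delta_nc_n]$ with $|x-y|\le Ac_n$, since every ingredient used above is itself uniform in these parameters.
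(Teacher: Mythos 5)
Your route is genuinely different from the paper's: you split the trajectory at the midpoint and apply the normal-deviation local limit theorem to both halves (the second after time reversal), whereas the paper decomposes the path at the minimum of the first coordinate (Lemma~\ref{lem.arbitrary.starting point}), which isolates the factor $V(x_1)$ from the pre-minimum piece, reduces everything to the start-at-zero small-deviation asymptotics of Proposition~\ref{SmallDev'} (itself proved via the Baxter--Spitzer recursion, where $g_{\alpha,\sigma}(0,\cdot)$ drops out of the Stone theorem applied to the \emph{free} walk over the bulk of the time interval), and then assembles $V(x_1)H(y_1)$ by a discrete integration by parts as in Doney's Proposition~11. That architecture never needs to identify a convolution of two meander densities. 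Your architecture does, and this is where the proposal has a genuine gap: the identity
\begin{equation*}
\frac{2^{1+d/\alpha}\sin\pi\rho}{\pi}\,F(a)=g_{\alpha,\sigma}\bigl(0,-2^{-1/\alpha}a\bigr),\qquad F(a)=\int_{\mathbb R^d}p_{M_{\alpha,\sigma}}(\eta)\,p_{\hat M}(\eta_1,\eta_{(2,d)}+a)\,d\eta,
\end{equation*}
is asserted, not proved. In dimension one this is a known meander-concatenation fact, but for $d\ge 2$ with a general spectral measure $\sigma$ it is not in the paper and not obviously in the literature; the only handle the paper gives on $p_{M_{\alpha,\sigma}}$ is the implicit representation $f(0,1;\cdot)$ from \eqref{DenF}, and extracting the identity from that by ``direct computation'' is essentially a theorem of comparable difficulty to the one you are proving. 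Without it your argument produces the correct structure $V(x_1)H(y_1)\cdot(\text{const})\cdot F\bigl((x-y)_{(2,d)}/c_N\bigr)/(nc_n^d)$ but not the stated limit.

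The second gap is the one you flag yourself but misdiagnose as available off the shelf. You need a bound $p_N(x,z)\le C\,V(x_1)\pr(\tau>N)c_N^{-d}\psi\bigl((z-x)/c_N\bigr)$ with $\psi$ simultaneously integrable in the transverse directions and vanishing at the boundary. The paper does not supply such a profile under the hypotheses of this theorem: Corollary~\ref{Cmin} gives the boundary decay $H(z_1\wedge c_n)$ but no decay in $|z-x|$, and the tail bounds of Theorem~\ref{thm.large.deviations} require the extra local assumption \eqref{eq:local.heavy}, which this theorem does not assume. If you bound the two factors separately by Corollary~\ref{Cmin} and sum over the slab $\{z_1<c_n/K,\,|z-x|\le Kc_n\}$, the transverse lattice count contributes $(Kc_n)^{d-1}$ and the resulting error is of order $K^{d-2-\alpha}$ times the main term, which does not vanish as $K\to\infty$ for $d\ge 3$. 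The region can in fact be handled, but by a different argument: use $\sum_z p_N(x,z)\,\mathbf{1}\{z_1<c_n/K\}=o_K(1)V(x_1)\pr(\tau>N)$ (from Theorem~\ref{Cru} and the absence of mass of $M_{\alpha,\sigma}$ at the boundary) together with the uniform bound $\sup_z p_{N'}(z,y)\le C\,H(y_1)V(c_{N'})/(N'c_{N'}^d)$ and the relation $\pr(\tau>N)V(c_{N'})\asymp 1$. You should either carry out an argument of this kind explicitly or switch to the paper's decomposition at the minimum, where this difficulty does not arise in the same form.
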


    To obtain the asymptotics for the Green function of $S(n)$ killed at leaving $\mathbb{H}^+$ 
    one has to estimate probabilities of local large deviations for $S(n)$.
    To this end we assume that there exists a regularly 
    varying $\phi$ of index $-\alpha$ 
    such that 
                \begin{equation}\label{eq:local.heavy}
                    \pr(X\in x+\Delta)\le \frac{\phi(|x|)} {|x|^{d}}=: g(|x|).
                \end{equation}
                and 
                \begin{equation}\label{eq:local.heavy.1}
                    a_1\phi(t)\le \pr(|X|> t)\le a_2\phi(t),\quad t>0, 
                \end{equation}
                for some positive constants $a_1,a_2$. 
    The fact that   global assumptions might in general give different 
    asymptotics  
    if the tails are too heavy is known in the multidimensional case 
    since Williamson~\cite{Williamson68} 
    who constructed a counterexample 
    for the Green functions on the whole space, which 
    has a different asymptotic behaviour without any local assumptions. 
                
    We now ready to formulate our bound for local large deviations.
                \begin{theorem}\label{thm.large.deviations}
                        
                    If~\eqref{eq:local.heavy} 
                    and~\eqref{eq:local.heavy.1} hold with some 
                    $\alpha<2$ then 
                    \begin{equation}\label{eq:arb.heavy.main.local.alpha2}
                        p_n(x,y) \le C_0
                        g(|x-y|)
                    H(y_1+1)V(x_1+1). 
                    \end{equation}    
    \end{theorem}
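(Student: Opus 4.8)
The plan is to argue separately on two ranges. When $|x-y|\le Kc_{n}$ for a suitably large fixed constant $K$, the estimate follows from what is already proved: Theorem~\ref{NormalDev'} and Theorem~\ref{thm:small.deviations.arbitrary.lattice} control $c_{n}^{d}\,\pr(x+S(n)\in y+\Delta\mid\tau_x>n)$, while $\pr(\tau_x>n)\le C\,V(x_1+1)(1+n)^{\rho-1}\ell(n)$ is a standard consequence of the Spitzer--Doney condition~\eqref{eq:Doney-Spitzer}; since $g(|x-y|)\ge g(Kc_{n})$ is of order $(nc_{n}^{d})^{-1}$ on this range while $H(y_1+1),V(x_1+1)\ge1$, one checks directly that the right-hand side of \eqref{eq:arb.heavy.main.local.alpha2} dominates. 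Hence it suffices to prove the bound on the large-deviation range $|x-y|\ge Kc_{n}$; there one proceeds by induction on $n$, the range of bounded $n$ serving as the base case (for $n\le n_{0}$ one simply has $p_n(x,y)\le\pr(S(n)\in y-x+\Delta)\le C_{n_{0}}\,g(|x-y|)$, using one big jump among finitely many).

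For the inductive step, fix a small $\varepsilon\in(0,1)$ (independent of $n$) and call a jump \emph{large} if its modulus is $\ge\varepsilon|x-y|$. Summing over the index $k$ of the first large jump and using the Markov property at time $k$, together with $\{\tau_x>n\}=\{\tau_x>k\}\cap\{\tau_{x+S(k)}>n-k\}$, we get
\[
p_n(x,y)\le\sum_{k=1}^{n}\e\Bigl[\ind\{\tau_x>k-1\}\,\Psi_k\bigl(x+S(k-1)\bigr)\Bigr]+R_n(x,y),
\]
where $R_n(x,y):=\pr\bigl(x+S(n)\in y+\Delta,\ \tau_x>n,\ \max_{j\le n}|X(j)|<\varepsilon|x-y|\bigr)$ and, for $z\in\mathbb H^+$,
\[
\Psi_k(z):=\sum_{w:\,|w|\ge\varepsilon|x-y|}\ind\{z+w\in\mathbb H^+\}\,\pr(X\in w+\Delta)\,p_{n-k}(z+w,y).
\]
By~\eqref{eq:local.heavy} and the eventual monotonicity of $g$, $\pr(X\in w+\Delta)\le C_{\varepsilon}\,g(|x-y|)$ for all $|w|\ge\varepsilon|x-y|$, so $\Psi_k(z)\le C_{\varepsilon}\,g(|x-y|)\sum_{x'\in\mathbb H^+}p_{n-k}(x',y)$. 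The key identity, obtained by reversing the time direction in a path of length $m$ (summing over the minimal lattice in the lattice case, integrating in the non-lattice case), is
\[
\sum_{x'\in\mathbb H^+}p_m(x',y)=\pr\bigl(\textstyle\max_{1\le j\le m}S_1(j)<y_1\bigr),
\]
which turns the inner sum into a level-crossing probability governed by the ascending ladder heights. Plugging in the classical bounds $\pr(\tau_x>k)\le C\,V(x_1+1)(1+k)^{\rho-1}\ell_2(k)$ and $\pr(\max_{j\le m}S_1(j)<y_1)\le C\,H(y_1+1)(1+m)^{-\rho}\ell_3(m)$ reduces the big-jump part to
\[
C_{\varepsilon}\,g(|x-y|)\,V(x_1+1)\,H(y_1+1)\sum_{k=1}^{n}k^{\rho-1}(n-k+1)^{-\rho}\,\ell_2(k)\,\ell_3(n-k),
\]
and this sum is bounded \emph{uniformly in $n$}: $\sum_{k=1}^{n}k^{\rho-1}(n-k+1)^{-\rho}$ is a convergent discrete Beta-type sum, and the slowly varying functions $\ell_2,\ell_3$ arising from the ladder-epoch asymptotics are, up to constants, reciprocal --- this is exactly the Wiener--Hopf/Spitzer relation $\pr(\tau>n)\,\pr(\tau^{+}>n)\asymp n^{-1}$ read at the relevant scales. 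So the big-jump part is $\le C_{\varepsilon}\,g(|x-y|)V(x_1+1)H(y_1+1)$, \emph{unconditionally}.

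It remains to absorb $R_n(x,y)$, and this is where the inductive hypothesis enters. Splitting the path at time $n/2$ and dropping the smallness constraint on the second half, $R_n(x,y)\le\sum_{z}q(z)\,p_{n/2}(z,y)$, where $q(z):=\pr(x+S(n/2)=z,\ \tau_x>n/2,\ \max_{j\le n/2}|X(j)|<\varepsilon|x-y|)$ (resp.\ its non-lattice analogue). By the inductive hypothesis $p_{n/2}(z,y)\le C_0\,g(|z-y|)H(y_1+1)V(z_1+1)$; splitting the $z$-sum according to whether $z$ is near $x$, near $y$, or in between, using the eventual monotonicity of $g$ together with the subadditivity of the renewal function $V$ for the ``near $x$'' part, a crude bound ($p_{n/2}(z,y)\le1$, $q$ super-polynomially small) for the ``near $y$'' part, and a two-sided local Fuk--Nagaev inequality applied to the truncated walk (at level $\varepsilon|x-y|$; note that on the range $|x-y|\ge Kc_{n}$ the quantity $n\,\pr(|X|>|x-y|)$ is bounded of order $K^{-\alpha}$) for the ``in between'' part, one obtains $R_n(x,y)\le\delta(\varepsilon)\,C_0\,g(|x-y|)V(x_1+1)H(y_1+1)$ with $\delta(\varepsilon)\to0$ as $\varepsilon\to0$. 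Fix $\varepsilon$ so small that $\delta(\varepsilon)\le1/2$. Combining the two parts, $p_n(x,y)\le\bigl(C_{\varepsilon}+\tfrac12C_0\bigr)g(|x-y|)V(x_1+1)H(y_1+1)$, which closes the induction provided $C_0\ge2C_{\varepsilon}$ (and $C_0\ge C_{n_{0}}$). Finally the two ranges are glued, taking $C_0$ to be the maximum of the constants produced. The main obstacle is the control of $R_n$ --- carrying out the Fuk--Nagaev estimate uniformly over all the sub-configurations (which coordinate carries the deviation, the sizes of $x_1,y_1$ relative to $c_{n}$) so as to genuinely extract the small factor $\delta(\varepsilon)$ --- together with establishing the uniform-in-$n$ bound for the renewal sum above; both rest on the interplay, via the Wiener--Hopf factorization, between the tails of $\tau$ and $\tau^{+}$ and the regular variation of $c_{n}$.
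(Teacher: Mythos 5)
Your strategy is genuinely different from the paper's: the paper first proves the bound for the origin-started walk by induction on $n$ using the Baxter--Spitzer recursion $nb_n(y)=\pr(S(n)\in y+\Delta)+\sum_{k<n}\int\pr(S(k)\in y-z+\Delta,S_1(k)>0)\,dB_{n-k}(z)$, and then passes to a general starting point by decomposing at the minimum of the first coordinate (Lemma~\ref{lem.arbitrary.starting point}); you instead run an induction directly for general $x$ via a first-large-jump decomposition. Your big-jump part is correct and does not need the induction: the reversal identity $\sum_{x'}p_m(x',y)\le\pr(\max_{j\le m}S_1(j)<y_1+1)\le CH(y_1+1)\pr(\tau^+>m)$ is exactly the duality used in the proof of Lemma~\ref{lem:lem20}, and the sum $\sum_{k=1}^nk^{\rho-1}(n-k+1)^{-\rho}l(k)/l(n-k+1)$ is indeed $O(1)$ uniformly in $n$ by Karamata. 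The small range $|x-y|\le Kc_n$ is also fine, though the right tool there is the uniform bound of Lemma~\ref{lem:lem20} rather than the asymptotic Theorems~\ref{NormalDev'} and~\ref{thm:small.deviations.arbitrary.lattice}.

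The gap is in the remainder $R_n$, specifically in the ``near $x$'' region. Having dropped the smallness constraint on the second half of the path, you bound $p_{\lfloor n/2\rfloor}(z,y)$ by the inductive hypothesis, which for $|z-x|\le|x-y|/2$ gives $C\,C_0\,g(|x-y|)V(z_1+1)H(y_1+1)$, and then sum $q(z)V(z_1+1)$ over such $z$. But $q(z)\le\pr(x+S(\lfloor n/2\rfloor)\in z+\Delta,\tau_x>\lfloor n/2\rfloor)$ and $V$ is harmonic for the killed one-dimensional walk, so $\sum_zq(z)V(z_1+1)$ is of order $V(x_1+1)$ --- there is no factor $\delta(\varepsilon)$ to be had from monotonicity of $g$ or subadditivity of $V$. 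The resulting recursion is $C_0^{(n)}\le C_\varepsilon+(1+o(1))\,C_0^{(\lfloor n/2\rfloor)}$, whose solution grows like $C_\varepsilon\log_2n$, so the induction does not close with a uniform constant. The underlying issue is that for $z$ near $x$ the whole displacement $|x-y|$ must be produced by the second half of the path, so the no-big-jump constraint must be kept there: the event $\{\max_{j\le n}|X(j)|<\varepsilon|x-y|\}$ has to be estimated in one piece, combining a Fuk--Nagaev bound (which on $|x-y|\ge Kc_n$ yields a factor $(Cn\phi(|x-y|))^{c/\varepsilon}$ that beats $n\,g(|x-y|)c_n^d$ once $\varepsilon$ is small and $K=K(\varepsilon)$ is large) with the boundary factors extracted as in Lemma~\ref{lem:lem20}. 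The paper sidesteps this entirely: in the recursion $nb_n(y)=\cdots$ the factor $n$ sits on the left-hand side, so the diagonal contribution only needs the bound $\mathrm{e}^{\delta}\rho_{n-1}(n-1)\gamma_{n-1}gH$ with $\mathrm{e}^{\delta}\rho_{n-1}+\delta\mathrm{e}^{\delta}<1$, and no small factor has to be manufactured.
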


Having asymptotics for $p_n(x,y)$ for all possible ranges (for small, normal and large deviations), one can easily derive asymptotics for the Green function.
We start with the lattice case, where 
we combine Theorem~{\ref{thm:small.deviations.arbitrary.lattice} 
and Theorem~\ref{thm.large.deviations}  
to obtain the asymptotics of the Green function near the boundary.  

\begin{theorem}\label{thm.green.near.boundary}
    Assume $X\in\mathcal D(d,\alpha,\sigma)$ with some $\alpha<2$. 
    Suppose there exists a regularly varying  $\phi$ 
                    such that~\eqref{eq:local.heavy} 
                    and~\eqref{eq:local.heavy.1} hold.   
    \begin{enumerate}  
    \item
     If the distribution of $X$ is lattice and if $\Z^d$ is minimal for $X$               
then we have
    \[ 
        G(x,y) 
        \sim  C  \frac{H(y_1)V(x_1)}{|x-y|^d}  
        \int_0^{\infty} 
        g_{\alpha,\sigma}\left(0, \frac{y_{2,d}-x_{2,d}}{|x-y|} t\right) 
        t^{d-1} dt   
    \]
    for $x_1,y_1=o(|x-y|)$. 
    In particular,  in the isotopic case,  
    that is when the limiting density $\sigma$ is uniform on the unit sphere,
    \[
        G(x,y) \sim C_{\alpha} \frac{H(y_1)V(x_1)}{|x-y|^d}, \quad |x-y|\to \infty,     
    \]
    for $x_1,y_1=o(|x-y|)$. 
  \item  
    If $X$ is non-lattice then
     
    \[ 
        G(x,y) 
        \sim  C  \frac{\int_{y_1}^{y_1+1}H(u)duV(x_1)}{|x-y|^d}  
        \int_0^{\infty} 
        g_{\alpha,\sigma}\left(0, \frac{y_{2,d}-x_{2,d}}{|x-y|} t \right) 
        t^{d-1} dt   
    \]
    for $x_1,y_1=o(|x-y|)$. 
    In particular,  in the isotopic case,  
    that is when the limiting density $\sigma$ is uniform on the unit sphere,
    \[
        G(x,y) \sim C \frac{\int_{y_1}^{y_1+1}H(u)duV(x_1)}{|x-y|^d}, \quad |x-y|\to \infty,     
    \]
    for $x_1,y_1=o(|x-y|)$. 
\item
    In addition, there exists a constant $C$ such that 
    for all $x,y\in\mathbb{H}^+$, 
    \begin{equation}\label{eq.g.upper.bound}
      G(x,y) \le   C \frac{H(y_1)V(x_1)}{|x-y|^d}.
    \end{equation}
    \end{enumerate}
\end{theorem}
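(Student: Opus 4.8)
The plan is to split the series $G(x,y)=\sum_{n\ge0}p_n(x,y)$ at the scale $N=N(r):=\min\{n\ge1:c_n\ge r\}$, where $r:=|x-y|$, using Theorem~\ref{thm:small.deviations.arbitrary.lattice} on the range $c_n\gtrsim r$ and Theorem~\ref{thm.large.deviations} on the complementary large-deviation range, and then to recognise the resulting truncated sum as a Riemann sum. First I would record a few elementary facts. Since $c_n$ is non-decreasing with $c_n\sim n^{1/\alpha}l_1(n)$, the function $u\mapsto\#\{n:c_n\le u\}$ is regularly varying of index $\alpha$, and hence
\[
\sum_{n\ge1}\frac1n\,\mathrm{I}\{c_n/r\in(a,b]\}\longrightarrow\alpha\log(b/a)\qquad(0<a<b,\ r\to\infty);
\]
in particular $\sum_{n>N(r)}(nc_n^d)^{-1}\asymp c_{N(r)}^{-d}\asymp r^{-d}$. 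As $\alpha<2$, Karamata's theorem gives $\mu(t)\asymp\pr(|X|>t)$, which together with \eqref{eq:local.heavy.1} and the definition of $c_n$ yields $N(r)\phi(r)\asymp1$, hence $N(r)g(r)=N(r)\phi(r)r^{-d}\asymp r^{-d}$. Finally $H,V\ge1$ on $\mathbb H^+$ and the renewal measures of unit intervals are bounded, so $H(y_1+1)\asymp H(y_1)$ and $V(x_1+1)\asymp V(x_1)$ uniformly.

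For parts (i) and (ii) I would fix a large constant $A$ and write $G(x,y)=\Sigma_{<}+\Sigma_{\ge}$ with $\Sigma_{<}=\sum_{n:\,c_n<r/A}p_n(x,y)$ and $\Sigma_{\ge}=\sum_{n:\,c_n\ge r/A}p_n(x,y)$. By Theorem~\ref{thm.large.deviations} and the facts just recorded, $\Sigma_{<}\le C_0\,g(r)H(y_1+1)V(x_1+1)\,\#\{n:c_n<r/A\}\le C\,A^{-\alpha}\,H(y_1)V(x_1)\,r^{-d}$. On the range $c_n\ge r/A$ one has $|x-y|\le Ac_n$ and, because $x_1,y_1=o(r)$, also $x_1,y_1=o(c_n)$, so Theorem~\ref{thm:small.deviations.arbitrary.lattice} applies with its constant equal to $A$ and gives, uniformly over this range,
\[
\Sigma_{\ge}=(1+o(1))\,V(x_1)H(y_1)\sum_{n:\,c_n\ge r/A}\frac{g_{\alpha,\sigma}\big(0,\tfrac{y_{2,d}-x_{2,d}}{c_n}\big)}{n\,c_n^d}.
\]
Writing $c_n^d=r^d(c_n/r)^d$ and using the Riemann-sum convergence above --- legitimate because $g_{\alpha,\sigma}$ is bounded, continuous and decays like $|z|^{-d-\alpha}$ at infinity, so the integral below converges --- this sum equals
\[
\frac{\alpha+o(1)}{r^d}\int_{1/A}^{\infty}g_{\alpha,\sigma}\Big(0,\tfrac{y_{2,d}-x_{2,d}}{r}\,\tfrac1t\Big)\frac{dt}{t^{d+1}}=\frac{\alpha+o(1)}{r^d}\int_{0}^{A}g_{\alpha,\sigma}\Big(0,\tfrac{y_{2,d}-x_{2,d}}{r}\,s\Big)s^{d-1}\,ds.
\]
Letting $r\to\infty$ and then $A\to\infty$ --- so that $\Sigma_{<}=O(A^{-\alpha})H(y_1)V(x_1)r^{-d}$ is negligible against the main term and the truncated integral fills up to $\int_0^\infty$ --- gives the claimed equivalence with $C=\alpha$, any normalising constant from Theorem~\ref{thm:small.deviations.arbitrary.lattice} being absorbed into $C$. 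In the isotropic case $g_{\alpha,\sigma}(0,v)$ depends only on $|v|$, so (using $|y_{2,d}-x_{2,d}|/r\to1$) the integral tends to a finite constant, which yields $G(x,y)\sim C_\alpha H(y_1)V(x_1)|x-y|^{-d}$.

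For part (iii) I would split at $N(r)$ once more. For bounded $|x-y|$ the bound is trivial after possibly enlarging $C$, since the pointwise estimate $p_n(x,y)\le C\,H(y_1)V(x_1)(nc_n^d)^{-1}$ already gives $G(x,y)\le C'H(y_1)V(x_1)$ with $C'<\infty$. For large $|x-y|$, Theorem~\ref{thm.large.deviations} gives $\sum_{n\le N(r)}p_n(x,y)\le C_0N(r)g(r)H(y_1+1)V(x_1+1)\le C\,H(y_1)V(x_1)r^{-d}$, exactly as for $\Sigma_{<}$, while the uniform bound $p_n(x,y)\le C\,H(y_1)V(x_1)(nc_n^d)^{-1}$ (the global heat-kernel estimate for the killed walk, valid for all $n$ and all $x,y\in\mathbb H^+$ under \eqref{eq:local.heavy}) gives $\sum_{n>N(r)}p_n(x,y)\le C\,H(y_1)V(x_1)\sum_{n>N(r)}(nc_n^d)^{-1}\le C\,H(y_1)V(x_1)r^{-d}$. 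Adding the two bounds proves \eqref{eq.g.upper.bound}.

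Once Theorems~\ref{thm:small.deviations.arbitrary.lattice} and \ref{thm.large.deviations} are in hand the argument above is essentially bookkeeping with regular variation, and I expect two points to require real care. The first is the uniformity in the endpoint pair $(x,y)$ when Theorem~\ref{thm:small.deviations.arbitrary.lattice} is applied all the way down to $n\asymp(r/A)^\alpha$: when $x_1/|x-y|$ or $y_1/|x-y|$ decays slowly one must match the $\delta_n$-constraint on the starting and ending points by a short diagonal argument, as in the one-dimensional treatments~\cite{VW2009,Doney2012,CaravennaChaumont2013}. The second --- which I expect to be the genuine obstacle --- is the uniform bound $p_n(x,y)\le C\,H(y_1)V(x_1)(nc_n^d)^{-1}$ needed in part (iii): this is \emph{not} formally implied by \eqref{eq:arb.heavy.main.local.alpha2} (whose right-hand side is constant in $n$), and obtaining it requires interpolating the local large-deviation estimate with the local limit theorem of Theorem~\ref{NormalDev'} while keeping track of the gain $H(y_1)$ produced by the endpoint lying near the boundary; the slowly varying factors attached to $\pr(\tau>n)$, to $H$, and to $c_n$ must be shown to match, and that is where most of the work lies.
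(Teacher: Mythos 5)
Your proposal is correct and follows essentially the same route as the paper: the same split of the series at the scale $c_n\asymp|x-y|/A$, Theorem~\ref{thm.large.deviations} on the large-deviation range, Theorem~\ref{thm:small.deviations.arbitrary.lattice} plus a regular-variation/Riemann-sum computation on the remaining range, and the uniform bound in place of the asymptotics for part (iii). The one point you flag as a ``genuine obstacle'' --- the estimate $p_n(x,y)\le C\,H(y_1)V(x_1)(nc_n^d)^{-1}$ uniformly in $n$ and $x,y\in\mathbb{H}^+$ --- is not an obstacle at all: it is exactly Lemma~\ref{lem:lem20} (inequality \eqref{eq:49}), already established under the domain-of-attraction assumption alone, and the paper's proof of \eqref{eq.g.upper.bound} invokes precisely this lemma for the tail of the series.
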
  

\begin{remark} 
    For stable L\'evy processes 
    an exact formula (which can be analysed asymptotically) 
    for the Green function $g(x,y)$ was obtained in~\cite{TT2008}. 
    We are not aware of any result of this kind 
    for asymptotically stable random walks when $\alpha<2$. 
\end{remark}
\begin{remark}
    As we have already mentioned, walks with finite variances, which are a particular case of asymptotically stable walks with $\alpha=2$, were considered in~\cite[Theorem 2]{DRTW18} and~\cite{U2014} in the lattice case. 
In~\cite{DRTW18}  estimates 
of the behaviour of the Green function 
relied on~\cite{DW15,DuW15} and 
were obtained in a more general situation of convex cones. 
Using our methods we have  obtained  
asymptotics for the Green function in a half space 
for all asymptotically stable walks with $\alpha=2$.
Specialising this result to the case of finite variances,
we can obtain asymptotics under weaker moment conditions than 
in~\cite{DRTW18} and 
stronger than in~\cite{U2014}. 
The method we use is different from~\cite{U2014} 
who approached the problem using the potential kernel.

The only difference between $\alpha<2$ and $\alpha=2$ are estimates for local large deviations. In the case $\alpha=2$ one has to take care of the Gaussian component, 
what leads to very long calculations. For that reason we have decided not to include walks with $\alpha=2$ in the present paper and to consider 
this case in a separate paper. 

\end{remark}    

\begin{remark} 
It seems to be possible to extend 
the estimates for the Green function 
for asymptotically isotropic random walk in cones. 
This will be considered elsewhere 
and should also allow one to extend 
the results of~\cite{DW10,DW15,DW19,DW21,DuW15,DRTW18} 
to the stable case. 

\end{remark}

Since exit times from a half space can be considered as exit times for one-dimensional random walks, it is quite natural to use the methods, which are typical for walks on the real line.
In the proofs of our results on the asymptotic behaviour of the probabilities we follow this strategy and use a lot 
methods from \cite{Doney2012} and \cite{VW2009}. 
However is is worth mentioning that additional dimensions cause many additional technical problems, 
because of  the possibility of big jumps of random 
the walk which are still close to the boundary hyperplane $\{x:x_1=0\}$. 
While in the finite variance case one can try 
to control these jumps by assuming existence of  additional moments, 
this is not possible in the infinite variance case.  
For these reasons our estimates for $p_n(x,y)$ can not be considered as a straight forward generalisation of one-dimenisonal results.

\section{Preliminary upper bounds}
In this section we find bounds for $\pr(\tau_x>n)$.
Since $\tau_x$ is actually a stopping time for the one-dimensional walk $S_1(k)$, we may apply Lemma 3 from~\cite{DW16}, which gives us the following estimate. 
\begin{lemma}
\label{thm:tail_bound_1}
Assume that \eqref{eq:Doney-Spitzer} holds. 
Then there exists $C_0$ such that for every  
$x\in \mathbb H^+$ one has 
\begin{equation} 
\label{eq:Tg_bound.2}
\frac{\pr(\tau_x>n)}{\pr(\tau_0>n)}\le C_0V(x_1),\quad n\ge1.
\end{equation}
\end{lemma}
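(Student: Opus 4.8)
The plan is to reduce the statement to a one-dimensional fact about the walk $S_1$ and then invoke the known half-line estimate. By definition $\tau_x=\min\{k\ge1:x_1+S_1(k)\le0\}$ depends on $x$ only through $x_1$ and is the first time the real-valued random walk $k\mapsto x_1+S_1(k)$ leaves $(0,\infty)$; therefore $\pr(\tau_x>n)$ is the probability that the one-dimensional walk $S_1$, started from $x_1$, stays positive up to time $n$. By \eqref{eq:Doney-Spitzer} this walk is oscillating and satisfies Spitzer's condition, so \eqref{eq:Tg_bound.2} is exactly the comparison estimate \cite[Lemma~3]{DW16} applied to $S_1$, with $V$ the renewal function of its descending ladder heights.

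For completeness, here is how I would argue the one-dimensional bound. For $n$ in any fixed bounded range it is trivial, since $\pr(\tau_0>n)>0$ and $V(x_1)\ge1$; so assume $n$ large and split according to the size of $x_1$ relative to $c_n$. If $x_1>\varepsilon c_n$ for a fixed small $\varepsilon$, I would bound $\pr(\tau_x>n)\le1$, use monotonicity and regular variation of $V$ to get $V(x_1)\ge V(\varepsilon c_n)\ge c\,V(c_n)$, and invoke the classical fact --- a consequence of the conditional stable limit theorem for $S_1$ together with the renewal theorem for its descending ladder process --- that $V(c_n)\,\pr(\tau_0>n)$ stays bounded away from $0$; this gives \eqref{eq:Tg_bound.2} in this range. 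If $x_1\le\varepsilon c_n$, I would exploit the harmonicity of $V$ for the walk killed on leaving $(0,\infty)$: the identity $\e[V(x_1+S_1(k));\tau_x>k]=V(x_1)$ converts survival up to an intermediate time $m$ into the factor $V(x_1)$, while the Markov property at a time $m=m(x_1)$ with $c_m\asymp x_1$ (so $m<n$ once $\varepsilon$ is small, hence $n-m\asymp n$) transfers the remaining survival to the ``bulk'' regime, where the relevant probability is comparable to $\pr(\tau_0>n)$.

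The main obstacle is precisely this last step: making the small-$x_1$ estimate uniform in both $x$ and $n$ requires breaking the circularity --- that the bound for the restarted walk is itself the inequality being proved --- by an induction on $n$ or by a separate, rougher fluctuation estimate for the walk killed on $(-\infty,0]$, and then tracking constants carefully. It is worth stressing that nothing here involves the coordinates $S_{(2,d)}$: since $\tau_x$ is measurable with respect to $S_1$ alone, the multidimensionality of $S$ is immaterial for this lemma and enters only in the subsequent sections.
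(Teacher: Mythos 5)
Your first paragraph is exactly the paper's proof: the lemma is reduced to the one-dimensional walk $S_1$ (of which $\tau_x$ is a stopping time) and then follows from Lemma~3 of \cite{DW16}; the paper gives no further argument. Your additional sketch of how one would prove the one-dimensional estimate from scratch is extra (and you correctly flag where the real work lies), but it is not needed for agreement with the paper.
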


Next lemma is an extension of~\cite[Lemma~20]{VW2009} to the case of half spaces.
We will give a  proof following a different approach, 
which relies on Lemma~\ref{thm:tail_bound_1}. 
This proof works in one-dimensional case as well, thus simplifying the corresponding arguments of~\cite{VW2009}. 

For $y=(y_1,\ldots, y_d),z=(z_1,\ldots, z_d)\in \R^d$  
    we will write $y\le z$ if $y_k\le z_k$ for all $1\le k\le d$. 

\begin{lemma}\label{lem:lem20}
    Assume that $X\in\mathcal{D}(d,\alpha,\sigma)$. 
    Then, there exists $C>0$ such that for all
    $x,y\in \mathbb H^+$ and all $n\ge 1$ we have
    \begin{equation}\label{eq:49}
        p_n(x,y) \le \frac{CV(x_1)H(y_1)}{n c_n^d}.
    \end{equation}
    Similar result holds for the stopping time $\tau^+$:
    $$
    \pr(S(n)\in x+\Delta;\tau^+>n)\le C\frac{V(x_1)}{nc_n^d}.
    $$
\end{lemma}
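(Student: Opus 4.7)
The plan is to split the path at $n_1 := \lfloor n/2 \rfloor$ (with $n_2 := n - n_1$), obtain a uniform one-sided estimate on each half, and combine them via the Markov property. The first half will be controlled directly using Lemma~\ref{thm:tail_bound_1}; the second half will be handled by a time-reversal argument that reduces it to a one-sided estimate for the dual walk $\tilde{S} := -S$.

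The first preliminary step is a \emph{uniform conditional density bound}: there exists $C>0$ such that for every $n\ge 1$, $x\in \mathbb{H}^+$ and $z\in \R^d$,
\begin{equation*}
\pr\bigl(x + S(n) \in z + \Delta \mid \tau_x > n\bigr) \le \frac{C}{c_n^d}.
\end{equation*}
When $x_1 = o(c_n)$ this follows from the weak form of Theorem~\ref{Cru} and boundedness of the meander density $p_{M_{\alpha,\sigma}}$; when $x_1 \gtrsim c_n$ the survival event has probability bounded away from zero and the bound reduces to Gnedenko's classical local limit theorem for $S(n)$. Combined with Lemma~\ref{thm:tail_bound_1} this produces the uniform one-sided estimate
\begin{equation*}
p_m(x, z) \le \frac{C V(x_1)\,\pr(\tau>m)}{c_m^d}, \qquad x,z\in \mathbb{H}^+,\ m\ge 1,
\end{equation*}
which already yields the $\tau^+$ part of the lemma when applied to the dual walk (whose exit time coincides with $\tau^+$ and whose $V$-function is $H$).

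For the second half of the path I use the path transformation $W(k) := S(m) - S(m-k)$, which satisfies $W \stackrel{d}{=} S$. Setting $u := z + S(m) \in y + \Delta$, the constraint $z + S(k) \in \mathbb{H}^+$ for $k = 1,\dots,m$ translates into $W_1(k) < u_1$ for $k = 0,\dots,m-1$, and in terms of the dual walk $\tilde{S} = -W$ this becomes the exit event $\tilde{\tau}_{u_1 e_1} > m-1$ for the dual walk started at height $u_1 \in [y_1, y_1+1)$. Applying the uniform one-sided estimate to $\tilde{S}$ (for which $\tilde{V} = H$ and $\tilde{\tau}$ is essentially $\tau^+$), I obtain
\begin{equation*}
q_m(z, y) := \pr\bigl(\tau_z > m,\, z + S(m) \in y + \Delta\bigr) \le \frac{C H(y_1)\,\pr(\tau^+>m)}{c_m^d}
\end{equation*}
uniformly in $z \in \mathbb{H}^+$.

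Finally, I put the two halves together through the Markov property at time $n_1$,
\begin{equation*}
p_n(x, y) = \sum_z p_{n_1}(x, z)\, q_{n_2}(z, y) \le \pr(\tau_x > n_1)\cdot \sup_{z}q_{n_2}(z, y),
\end{equation*}
and bound $\pr(\tau_x > n_1) \le C V(x_1)\pr(\tau > n_1)$ by Lemma~\ref{thm:tail_bound_1} and $\sup_z q_{n_2}(z,y)$ by the previous display. The proof is then completed by the fluctuation-theoretic identity $\pr(\tau > n)\pr(\tau^+ > n) \le C/n$, which holds under Spitzer-Doney since $\pr(\tau > n)$ is regularly varying of index $\rho-1$ and $\pr(\tau^+ > n)$ of index $-\rho$, with slowly varying parts that are reciprocal by Wiener-Hopf. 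The hardest part of the argument will be establishing the uniform conditional density bound in all regimes of $x_1$ (including $x_1 \gg c_n$) without circular dependence on Theorem~\ref{NormalDev'}, and verifying that the multidimensional time reversal correctly absorbs the shift in the orthogonal coordinates $u_{(2,d)}$.
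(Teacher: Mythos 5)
Your architecture is essentially the paper's: split the time interval, control the initial segment by Lemma~\ref{thm:tail_bound_1}, control the terminal segment by time reversal (turning the survival constraint into one for the dual walk started at height $\approx y_1$, whose renewal function is $H$), insert a concentration-function bound to produce the factor $c_n^{-d}$, and finish with $\pr(\tau>n)\pr(\tau^+>n)\asymp 1/n$ from \eqref{integ1}. The paper does this with a single three-part split $n/4+n/2+n/4$ (concentration in the middle block, reversal on the last block), whereas you do a two-part split whose second half is itself split inside your ``uniform one-sided estimate''; these are the same argument. Your time-reversal bookkeeping (the reversed walk must land in a prescribed unit cube, and the starting height $u_1$ ranges over $[y_1,y_1+1)$) is handled in the paper by enlarging $\Delta$ to $2\Delta$ and summing over lattice cubes; that detail is routine, and subadditivity of $H$ absorbs the resulting $H(y_1+1)$.

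The one step I would push back on is your justification of the ``uniform conditional density bound'' $\pr(x+S(n)\in z+\Delta\mid\tau_x>n)\le C c_n^{-d}$ in the regime $x_1=o(c_n)$: weak convergence in Theorem~\ref{Cru} together with boundedness of $p_{M_{\alpha,\sigma}}$ does \emph{not} yield a bound on probabilities of sets whose diameter is $O(c_n^{-1})$ after rescaling --- that is precisely the local-limit content you are trying to avoid assuming, and the conditional formulation would moreover require a matching lower bound $\pr(\tau_x>n)\ge cV(x_1)\pr(\tau>n)$, which is not available at this stage. Fortunately this detour is unnecessary: the unconditional estimate you actually use, $p_m(x,z)\le CV(x_1)\pr(\tau>m)c_m^{-d}$, follows directly from the Markov property at time $m/2$, the concentration function inequality $\sup_w\pr(S(m/2)\in w+\Delta)\le Cc_m^{-d}$, and Lemma~\ref{thm:tail_bound_1} --- exactly as in Lemma~\ref{lem:lem19} of the paper. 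With that repair the proposal is sound and there is no circularity with Theorem~\ref{NormalDev'}.
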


\begin{proof}
We prove the first statement only. The proof of the second estimate
requires only notational changes.

    Put $n_1=[n/4], n_2=[3n/4]-n_1, n_3=n-[3n/4]$. 
    We split the probability of interest into 3 parts, 
    \begin{align*}
        p_n(x,y)&=
        \int_{\mathbb H^+} \pr(\tau_x>n_1, x+S(n_1)\in du)
        \int_{\mathbb H^+}\pr(\tau_u>n_2, u+S(n_2)\in dz)\\
    &\hspace{1cm}\times        \pr(\tau_z>n_3, z+S(n_3)\in y+\Delta). 
    \end{align*}
    Now we will make  use of the time inversion. 
    Let 
    $$\widetilde X(n)=-X(1),
    \widetilde X(n-1)=-X(2),\ldots, 
    \widetilde X(1)=-X(n)
    $$ 
    and
    \begin{align*} 
        \widetilde S(k) 
        &= \widetilde X(1)+\cdots+\widetilde X(k)\\ 
        &=-X(n)-\cdots-X(n-k+1) = 
        S(n-k)-S(n), \quad k=1,\ldots,n. 
    \end{align*}
    Let $1_d=(1,\ldots,1)$. 
    Then, 
    \begin{align*}
        &\pr(z+S(n_3)\in y+\Delta; \tau_z>n_3)\\
        &\hspace{1cm}= \pr(z+S(n_3)\in y+\Delta; z_1+
        \min(S_1(1),,\ldots, S_1(n_3)>0)\\
        &\hspace{1cm}= \pr(y+1_d+\widetilde S(n_3)\in z+(0,1]^d; 
        z_1+\min(\widetilde S_1(n_3-1),\ldots, \widetilde S_1(1))>\widetilde S_1(n_3))\\
        &\hspace{1cm}\le \pr(y+1_d+\widetilde S(n_3)\in z+(0,1]^d;  \widetilde \tau_{y+1_d}>n_3).        
    \end{align*}
Then, using the concentration function inequalities, we can continue as follows, 
\begin{align*}
&\int_{\mathbb H^+}\pr(\tau_u>n_2, u+S(n_2)\in dz)\pr(\tau_z>n_3, z+S(n_3)\in y+\Delta)\\
&\le \int_{\mathbb H^+}\pr(\tau_u>n_2, u+S(n_2)\in dz)\pr(y+1_d+\widetilde S(n_3)\in z+(0,1]^d, \widetilde \tau_{y+1_d}>n_3)\\        
&\le     
\sum_{j_1=0}^\infty \sum_{j_2=-\infty}^\infty 
\sum_{j_d=-\infty}^\infty 
\pr(\tau_u>n_2, u+S(n_2)\in [j_1,j_1+1)\times\dots [j_d,j_d+1) )\\
&\times 
\pr(y+1_d+\widetilde S(n_3)\in (j_1,\ldots,j_d)+2\Delta, \widetilde \tau_{y+1_d}>n_3)\\        
&\le \frac{C}{c_n^d} 
\sum_{j_1=0}^\infty \sum_{j_2=-\infty}^\infty 
\sum_{j_d=-\infty}^\infty 
\pr(y+1_d+\widetilde S(n_3)\in (j_1,\ldots,j_d)+2\Delta, \widetilde \tau_{y+1_d}>n_3)\\        
&\le \frac{C2^d}{c_n^d}  
\pr(\widetilde \tau_{y+1_d}>n_3).\\        
\end{align*}
As a result we obtain the bound 
\[
    p_n(x,y) \le \frac{C}{c_n^d}\pr(\tau_x>n_1)
    \pr(\widetilde \tau_{y+1_d}>n_3). 
\]     
Applying Lemma~\ref{thm:tail_bound_1} we obtain 
\begin{align*}
    p_n(x,y) &\le \frac{CH(y_1)}{c_n^d}
    \pr(\tau_x>n_1)
    \pr(\widetilde \tau_0>n_3)\\
    &\le \frac{CH(y_1)V(x_1)}{c_n^d}
    \pr(\tau_0>n_1)
    \pr(\tau^+_0>n_3).  
\end{align*}
Here recall that one can deduce by Rogozin's result that (\ref{Spit}) holds if and only if
    there exists a function $l(n)$, slowly varying at infinity, such that, as $%
    n\rightarrow \infty $,
    \begin{equation}\label{integ1}
    \mathbf{P}\left( \tau>n\right) \sim \frac{l(n)}{n^{1-\rho }},\ \
    \mathbf{P}\left( \tau ^{+}>n\right) \sim \frac{1}{\Gamma (\rho )\Gamma
    (1-\rho )n^{\rho }l(n)}.  
    \end{equation}%
Then we obtain that  $\pr(\tau_0>n) \pr(\tau^+_0>n)\sim \frac{C}{n}$
and  arrive at the conclusion. 
\end{proof}

    \begin{lemma}\label{lem:lem19}
        Assume that the random walk $S(n)$ is asymptotically stable. 
        Then, there exists $C>0$ such that for $x,y\in \mathbb H^+$ and all $n\ge 1$
        \begin{equation}\label{eq:44}
            p_n(x,y) \le \frac{C V(x_1) }{c_n^d} \frac{l(n)}{n^{1-\rho}}.
        \end{equation}
    \end{lemma}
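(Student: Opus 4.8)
The plan is to forget the dependence on $y$ altogether. Split the trajectory at the middle time $n_1:=[n/2]$ and write, by the Markov property,
\[
p_n(x,y)=\int_{\mathbb{H}^+}\pr\bigl(\tau_x>n_1,\ x+S(n_1)\in dz\bigr)\,
\pr\bigl(\tau_z>n-n_1,\ z+S(n-n_1)\in y+\Delta\bigr).
\]
In the inner factor I would drop both the survival event $\{\tau_z>n-n_1\}$ and the requirement that the endpoint land in $y+\Delta$, bounding it from above by the concentration function $\sup_{w\in\R^d}\pr\bigl(S(n-n_1)\in w+\Delta\bigr)$. Since $X\in\mathcal{D}(d,\alpha,\sigma)$ the walk is genuinely $d$-dimensional, and the Kolmogorov--Rogozin type concentration inequality already used in the proof of Lemma~\ref{lem:lem20} gives $\sup_{w}\pr(S(k)\in w+\Delta)\le C/c_k^d$ for all $k\ge1$; as $\{c_n\}$ is regularly varying of index $1/\alpha$ and $n-n_1\asymp n$, this yields $c_{n-n_1}\asymp c_n$, so the inner factor is at most $C/c_n^d$.

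The outer integral is then simply $\int_{\mathbb{H}^+}\pr(\tau_x>n_1,\ x+S(n_1)\in dz)=\pr(\tau_x>n_1)$, because on $\{\tau_x>n_1\}$ one automatically has $x+S(n_1)\in\mathbb{H}^+$. Hence $p_n(x,y)\le C\,c_n^{-d}\,\pr(\tau_x>n_1)$. Now Lemma~\ref{thm:tail_bound_1} gives $\pr(\tau_x>n_1)\le C_0 V(x_1)\pr(\tau_0>n_1)$, and the tail asymptotics \eqref{integ1}, together with the slow variation of $l$ and $n_1\asymp n$, give $\pr(\tau_0>n_1)\le C\,l(n)/n^{1-\rho}$. (For $n$ in any bounded range the asserted bound is trivial after enlarging $C$, since $p_n(x,y)\le1$ while the right-hand side is bounded below by a positive constant, using $V(x_1)\ge1$; thus one may assume $n_1\ge1$ in the main argument.) Combining these estimates yields \eqref{eq:44}.

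The argument is routine given what precedes it; the only point needing a little care is the concentration bound $\sup_{w}\pr(S(k)\in w+\Delta)\le C/c_k^d$ with the correct normalisation $c_k^d$, and this is exactly where the full $d$-dimensionality built into the hypothesis $X\in\mathcal{D}(d,\alpha,\sigma)$ enters. This is standard: in the lattice case it follows from the local limit theorem on the whole space (only $O(1)$ lattice points meet a unit cube, each carrying probability $O(c_k^{-d})$), and in the non-lattice case it follows from a standard smoothing argument and a characteristic-function estimate of the type $\int|\varphi_X(t/c_k)|^{k}\,dt=O(c_k^{d})$; it is the same estimate already exploited in the proof of Lemma~\ref{lem:lem20}.
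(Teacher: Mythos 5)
Your proposal is correct and follows essentially the same route as the paper: split at $[n/2]$, keep only the survival event up to the midpoint, bound the second half by the concentration function estimate $\sup_w\pr(S(k)\in w+\Delta)\le C/c_k^d$, and finish with Lemma~\ref{thm:tail_bound_1} and the tail asymptotics \eqref{integ1}. The extra remarks on small $n$ and on the source of the concentration bound are fine but not needed beyond what the paper already uses.
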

    \begin{proof}
        For $n\ge 2$,
        \begin{align*}
            p_n(x,y)&\le \pr(\tau_x>n/2)\sup_{z\in \R^d}\pr(S_{[n/2]} \in z+\Delta).
        \end{align*}
        Applying now a concentration function inequality we obtain 
        \[
            p_n(x,y) \le \pr(\tau_x>n/2) \frac{C_1}{c_n^d} \le  
            \frac{CV(x_1)}{c_n^d}\frac{l(n)}{n^{1-\rho}},
            \]
         since 
         \[
            \pr(\tau_x>n) \le C V(x_1)  \pr(\tau>n)\le  
            C V(x_1)\frac{l(n)}{n^{1-\rho}}. 
        \]
    \end{proof}
     
    Before proving the next lemma recall the following result, see~\cite[Lemma 13]{VW2009}. 
\begin{lemma}
    \label{Renew2} Suppose $X_{1}\in \mathcal{D}(\alpha ,\beta )$. 
    Then, as $%
    u\rightarrow \infty $,%
    \begin{equation}
    H(u)\sim \frac{u^{\alpha \rho }l_{2}(u)}{\Gamma (1-\alpha \rho )\Gamma
    (1+\alpha \rho )}  \label{RenStand}
    \end{equation}%
    if $\alpha \rho <1$, and
    \begin{equation}
    H(u)\sim ul_{3}(u)  \label{RenewRelat}
    \end{equation}%
    if $\alpha \rho =1,$ where%
    \begin{equation*}
    l_{3}(u):=\left( \int_{0}^{u}\pr\left( \chi ^{+}>y\right) dy\right)
    ^{-1},\text{ \ }u>0.
    \end{equation*}%
    In addition, there exists a constant $C>0$ such that, in both cases,
    \begin{equation}
    H(c_{n})\sim Cn\mathbf{P}(\tau>n)\quad \text{as }n\rightarrow \infty .
    \label{AsH}
    \end{equation}
    \end{lemma}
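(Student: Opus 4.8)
The plan is to reduce everything to classical renewal theory for the strict ascending ladder height $\chi^{+}$. The crucial input is the tail of $\chi^{+}$: under the Spitzer--Doney condition \eqref{eq:Doney-Spitzer} with parameter $\rho$, the first strict ascending ladder epoch $\tau^{+}$ lies in the domain of attraction of a positive stable law of index $\rho$, and, via the bivariate Wiener--Hopf factorization together with Rogozin's results~\cite{Rog71}, the ladder height $\chi^{+}$ lies in the domain of attraction of a stable law of index $\alpha\rho\le1$. Concretely, there is a slowly varying $l_{2}$ with
\[
\pr(\chi^{+}>u)\sim\frac{1}{u^{\alpha\rho}\,l_{2}(u)},\qquad u\to\infty,
\]
when $\alpha\rho<1$, while in the boundary case $\alpha\rho=1$ either $\e[\chi^{+}]<\infty$ or $\chi^{+}$ is relatively stable, i.e.\ $m(u):=\int_{0}^{u}\pr(\chi^{+}>y)\,dy$ is slowly varying; in all cases $l_{3}(u)=1/m(u)$.

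First I would treat $\alpha\rho<1$. Writing $\hat\chi(s)=\e[e^{-s\chi^{+}}]$, Karamata's Tauberian theorem (see, e.g., \cite{FE}) applied to the tail of $\chi^{+}$ gives $1-\hat\chi(s)\sim\Gamma(1-\alpha\rho)\,s^{\alpha\rho}/l_{2}(1/s)$ as $s\downarrow0$. Since $\int_{[0,\infty)}e^{-su}\,dH(u)=(1-\hat\chi(s))^{-1}$, this yields $\int e^{-su}\,dH(u)\sim l_{2}(1/s)/(\Gamma(1-\alpha\rho)s^{\alpha\rho})$, and a second application of Karamata's Tauberian theorem (now from Laplace transform back to the function) gives $H(u)\sim u^{\alpha\rho}l_{2}(u)/(\Gamma(1-\alpha\rho)\Gamma(1+\alpha\rho))$, which is \eqref{RenStand}. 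In the case $\alpha\rho=1$ the same reduction places us in the setting of the elementary renewal theorem (if $\e[\chi^{+}]<\infty$) or of the renewal theorem for relatively stable step distributions (if $m$ is slowly varying), both of which give $H(u)\sim u/m(u)=u\,l_{3}(u)$, i.e.\ \eqref{RenewRelat}.

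Finally, for \eqref{AsH} I would substitute $c_{n}=n^{1/\alpha}l_{1}(n)$ into the asymptotics just obtained and invoke \eqref{integ1}; this reduces the claim to a consistency relation between the slowly varying functions $l,l_{1},l_{2}$ (respectively $l,l_{1},l_{3}$). Rather than verify such a relation directly, it is cleaner to argue through the renewal counting function of $\tau^{+}$: letting $N_{n}$ be the number of strict ascending ladder epochs up to time $n$, the renewal theorem for $\tau^{+}$ (which is in the domain of attraction of a stable law of index $\rho$, with tail governed by the second relation in \eqref{integ1}) gives $\e[N_{n}]\sim c\,n\pr(\tau>n)$ for an explicit $c>0$. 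On the other hand $\chi^{+}_{1}+\cdots+\chi^{+}_{N_{n}}=S_{1}(\tau^{+}_{N_{n}})$ is, by Theorem~\ref{Cru} and standard ladder-height estimates, of exact order $c_{n}$ with overwhelming probability, so $H(c_{n})=\e\,\#\{k\colon\chi^{+}_{1}+\cdots+\chi^{+}_{k}\le c_{n}\}$ is comparable to $\e[N_{n}]$; combining the two comparisons gives $H(c_{n})\sim C\,n\pr(\tau>n)$.

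I expect this last step to be the main obstacle. The passage from ``the ladder height at the $N_{n}$-th epoch is of order $c_{n}$'' to the genuine two-sided comparison $H(c_{n})\asymp\e[N_{n}]$ needs uniform integrability of $N_{n}/\e[N_{n}]$ and a matching of the joint scaling of the bivariate ladder process (epochs together with heights), and it is precisely here that the compatibility of the normalizing sequences $c_{n}$ and $n^{\rho}$ is used; by contrast, the two Tauberian computations leading to \eqref{RenStand} and \eqref{RenewRelat} are routine once the ladder-height tail is in hand.
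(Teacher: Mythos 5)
The paper does not prove this lemma at all: it is quoted verbatim from \cite[Lemma 13]{VW2009}, so there is no internal proof to compare against. Judged on its own merits, your argument for \eqref{RenStand} and \eqref{RenewRelat} is correct and is the standard one: given the key input that $\pr(\chi^{+}>u)$ is regularly varying of index $-\alpha\rho$ (Rogozin's theorem, which does require $X_{1}\in\mathcal{D}(\alpha,\beta)$ and not merely Spitzer's condition, as you correctly note), the two applications of Karamata's Tauberian theorem to $\int e^{-su}\,dH(u)=(1-\hat\chi(s))^{-1}$ produce exactly the constant $\Gamma(1-\alpha\rho)\Gamma(1+\alpha\rho)$, and the boundary case $\alpha\rho=1$ is Erickson's renewal theorem. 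Also, your first reduction for \eqref{AsH} is fine and can even be shortened: by duality $\pr(\tau>m)$ equals the probability that $m$ is a strict ascending ladder epoch, so $\e[N_{n}]=\sum_{m\le n}\pr(\tau>m)\sim\rho^{-1}n\pr(\tau>n)$ directly by Karamata, with no appeal to the renewal theorem for $\tau^{+}$.

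The genuine gap is the step $H(c_{n})\sim C\,\e[N_{n}]$, and you have correctly identified it as the obstacle without closing it. The observation that $\chi^{+}_{1}+\cdots+\chi^{+}_{N_{n}}=\max_{k\le n}S_{1}(k)$ is of order $c_{n}$ with high probability can, with uniform integrability of $N_{n}/\e[N_{n}]$ and the regular variation of $H$, be pushed to a two-sided bound $H(c_{n})\asymp\e[N_{n}]$, but it cannot produce the asymptotic equivalence with a constant that \eqref{AsH} asserts: the sandwich loses multiplicative constants on both sides. After \eqref{RenStand} and \eqref{integ1}, the claim \eqref{AsH} is \emph{equivalent} to the exact tail comparison $\pr(\chi^{+}>c_{n})\sim c\,\pr(\tau^{+}>n)$, i.e.\ to the compatibility of the norming sequences of the bivariate ladder process $(\tau^{+},\chi^{+})$ with an identified constant. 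That statement is precisely the nontrivial content of \cite[Lemma 13]{VW2009} (going back to Rogozin and Doney); it has to be either imported or proved, for instance from $\pr(\chi^{+}>u)=\sum_{m}\pr(\tau^{+}=m,\,S_{1}(m)>u)$ combined with the conditional limit theorem for $S_{1}(m)/c_{m}$ on $\{\tau^{+}>m\}$, or from a Tauberian analysis of the bivariate Wiener--Hopf factor $\e[s^{\tau^{+}}e^{-\lambda\chi^{+}}]$. As written, your proof establishes \eqref{RenStand} and \eqref{RenewRelat} but only the order-of-magnitude version of \eqref{AsH}.
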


    \begin{lemma}
        \label{LDensity} There exists a constant $C\in \left( 0,\infty \right) $
        such that, for all $z\in \lbrack 0,\infty )\times \R^{d-1},$%
        \begin{equation*}
        \lim \sup_{\varepsilon \downarrow 0}\varepsilon ^{-d}\mathbf{P}\left(
        M_{\alpha ,\sigma }\in z+\varepsilon\Delta \right) \leq C\min
        \{1,(z_1)^{\alpha \rho }\}.
        \end{equation*}%
        In particular,
        \begin{equation*}
        \lim_{z_1\downarrow 0}\lim \sup_{\varepsilon \downarrow 0}\varepsilon ^{-d}%
        \mathbf{P}\left( M_{\alpha ,\sigma }\in \lbrack z,z+\varepsilon )\right) =0.
        \end{equation*}
        \end{lemma}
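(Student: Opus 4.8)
The plan is to transfer the estimate to the prelimit conditional law of $S(n)/c_n$ given $\{\tau>n\}$ via Theorem~\ref{Cru}, and then to bound the conditional probability of a small cube by covering it with unit cubes and applying the uniform upper bounds of Lemmas~\ref{lem:lem20} and~\ref{lem:lem19}.

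Fix $z\in[0,\infty)\times\R^{d-1}$ and $\varepsilon>0$. Because $M_{\alpha,\sigma}$ has a density, the box $z+\varepsilon\Delta$ is a continuity set for its law, so the weak convergence contained in Theorem~\ref{Cru} (with $x=0$) yields
\[
\mathbf{P}(M_{\alpha,\sigma}\in z+\varepsilon\Delta)
=\lim_{n\to\infty}\frac{\mathbf{P}\bigl(S(n)\in c_nz+c_n\varepsilon\Delta,\ \tau>n\bigr)}{\mathbf{P}(\tau>n)}.
\]
To bound the numerator I would cover $c_nz+c_n\varepsilon\Delta$ by the unit cubes $y+\Delta$, $y\in\Z^d$, that it meets: there are at most $(c_n\varepsilon+1)^d$ of them, and each relevant $y$ satisfies $0\le y_1\le c_n(z_1+\varepsilon)$. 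Applying \eqref{eq:49} (in the slightly shifted form with $H(y_1+1)$, so as to cover also $y_1=0$; recall $V(0)$ is a finite constant) together with the monotonicity of $H$ gives
\[
\mathbf{P}\bigl(S(n)\in c_nz+c_n\varepsilon\Delta,\ \tau>n\bigr)
\le (c_n\varepsilon+1)^d\,\frac{C\,H\bigl(c_n(z_1+\varepsilon)+1\bigr)}{n\,c_n^d}.
\]
Dividing by $\mathbf{P}(\tau>n)$, substituting $n\mathbf{P}(\tau>n)\sim CH(c_n)$ from \eqref{AsH}, letting $n\to\infty$, and using that $(c_n\varepsilon+1)^d/c_n^d\to\varepsilon^d$ together with the regular variation of $H$ of index $\alpha\rho$ (Lemma~\ref{Renew2}), so $H(c_n(z_1+\varepsilon)+1)/H(c_n)\to(z_1+\varepsilon)^{\alpha\rho}$, I arrive at $\mathbf{P}(M_{\alpha,\sigma}\in z+\varepsilon\Delta)\le C\varepsilon^d(z_1+\varepsilon)^{\alpha\rho}$. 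Rerunning the same covering but using \eqref{eq:44} instead (which, after division by $\mathbf{P}(\tau>n)$, bounds each unit cube by $C/c_n^d$) gives in addition $\mathbf{P}(M_{\alpha,\sigma}\in z+\varepsilon\Delta)\le C\varepsilon^d$. Combining the two bounds, dividing by $\varepsilon^d$, and letting $\varepsilon\downarrow0$ yields $\limsup_{\varepsilon\downarrow0}\varepsilon^{-d}\mathbf{P}(M_{\alpha,\sigma}\in z+\varepsilon\Delta)\le C\min\{1,z_1^{\alpha\rho}\}$, and the ``in particular'' assertion then follows on letting $z_1\downarrow0$.

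Given the lemmas above, the argument is largely routine; the step I expect to need the most care is the passage to the limit $n\to\infty$ in the covering bound --- keeping the cube count $(c_n\varepsilon+1)^d$ and the renewal ratio $H(c_n(z_1+\varepsilon)+1)/H(c_n)$ under control, and disposing of the harmless discrepancy between $H(y_1)$ and $H(y_1+1)$ near $y_1=0$ so that the final estimate is valid for every $z$, including $z_1=0$. One could instead try to estimate the density $p_{M_{\alpha,\sigma}}$ pointwise through the local limit theorem (Theorem~\ref{NormalDev'}), but concluding that way would require continuity of $p_{M_{\alpha,\sigma}}$, which is why the continuity-set route above seems preferable.
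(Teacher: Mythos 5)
Your argument is essentially the paper's own proof: the paper likewise passes to the prelimit conditional law via Theorem~\ref{Cru}, bounds $\pr(S(n)\in c_nz+\varepsilon c_n\Delta\mid\tau>n)$ by the uniform local estimate (Corollary~\ref{Cmin}, which is exactly your two bounds from Lemmas~\ref{lem:lem20} and~\ref{lem:lem19} combined, with the unit-cube covering left implicit), and then invokes the regular variation of $H$ together with \eqref{AsH}. The only cosmetic difference is that the paper uses the one-sided Portmanteau inequality $\pr(M_{\alpha,\sigma}\in z+\varepsilon\Delta)\le\limsup_n\pr(\cdot\mid\tau>n)$ rather than your continuity-set equality, which is slightly cleaner since it does not presuppose the density of $M_{\alpha,\sigma}$; both versions are fine here.
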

        
        \begin{proof}
        For all $z\in \lbrack 0,\infty )\times \R^{d-1},$ 
        and all $\varepsilon >0$ we have
        \begin{equation*}
        \pr\left( M_{\alpha ,\sigma }\in z+\varepsilon \Delta \right) \leq
        \lim \sup_{n\rightarrow \infty }\mathbf{P}\left( S(n)\in 
        c_{n} z+\varepsilon  c_n\Delta \mid \tau>n\right) .
        \end{equation*}%
        Applying (\ref{P11}) gives
        \begin{equation*}
            \pr\left( S(n)\in 
            c_{n} z+\varepsilon  c_n\Delta \mid \tau>n\right)
        \leq C\frac{H(\min (c_{n},(z_1+\varepsilon )c_{n}))%
        }{nc_{n}^d\mathbf{P}(\tau>n)}(\varepsilon c_{n})^d.
        \end{equation*}%
        Recalling that $H(x)$ is regularly varying with index $\alpha \rho $ by
        Lemma \ref{Renew2} and taking into account (\ref{AsH}), we get
        \begin{align*}
            \pr\left( S(n)\in 
            c_{n} z+\varepsilon  c_n\Delta \mid \tau>n\right) 
            & \leq C\varepsilon^d \min \{1,(z_1+\varepsilon )^{\alpha \rho }\}%
        \frac{H(c_{n})}{n\mathbf{P}(\tau>n)} \\
        & \leq C\varepsilon^d \min \{1,(z_1+\varepsilon )^{\alpha \rho }\}.
        \end{align*}%
        Consequently,
        \begin{equation}
        \pr\left( M_{\alpha ,\sigma }\in z+\varepsilon \Delta \right)\leq C\varepsilon^d
        \min \{1,(z_1+\varepsilon )^{\alpha \rho }\}.  \label{Majorante}
        \end{equation}%
        This inequality shows that there exists a constant $C\in (0,\infty )$ such
        that
        \begin{equation*} 
        \lim \sup_{\varepsilon \downarrow 0}\varepsilon ^{-d}
        \pr\left( M_{\alpha ,\sigma }\in z+\varepsilon \Delta \right) \leq C\min
        \{1,(z_1)^{\alpha \rho }\}, \text{ for all \ }z\in \lbrack 0,\infty )\times \R^{d-1}, 
        \end{equation*}%
        as desired.
\end{proof}

    \begin{corollary}
        \label{Cmin}
        Let $X\in \mathcal{D}(d,\alpha ,\sigma).$ 
        There exists a
        constant $C\in \left( 0,\infty \right) $ such that, for all 
        $n\geq 1$ and $x,y\in\mathbb H^+$,%
        \begin{equation}
        p_{n}(x,y)\leq CV(x_1)\frac{H(\min (c_{n},y_1))}{nc_{n}^d}.  \label{P11}
        \end{equation}%
        \end{corollary}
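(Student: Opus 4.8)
The plan is to deduce \eqref{P11} by combining the two complementary upper bounds already established --- Lemma~\ref{lem:lem20} and Lemma~\ref{lem:lem19} --- splitting according to whether $y_1\le c_n$ or $y_1>c_n$.

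If $y_1\le c_n$, then $\min(c_n,y_1)=y_1$ and \eqref{P11} is nothing but the estimate of Lemma~\ref{lem:lem20}; there is nothing to prove in this regime.

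If $y_1>c_n$, then $\min(c_n,y_1)=c_n$, and here I would start from Lemma~\ref{lem:lem19}, which gives
\[
p_n(x,y)\le \frac{CV(x_1)}{c_n^d}\,\frac{l(n)}{n^{1-\rho}}
\]
uniformly in $y$, and then rewrite the factor $l(n)/n^{1-\rho}$ in terms of $H(c_n)$. This rewriting is immediate from Lemma~\ref{Renew2}: by \eqref{integ1} one has $l(n)/n^{1-\rho}\asymp\pr(\tau>n)$, while \eqref{AsH} gives $\pr(\tau>n)\asymp n^{-1}H(c_n)$. Chaining these two relations produces a constant $C$ with $l(n)/n^{1-\rho}\le Cn^{-1}H(c_n)$ for all sufficiently large $n$, and hence $p_n(x,y)\le CV(x_1)H(c_n)/(nc_n^d)=CV(x_1)H(\min(c_n,y_1))/(nc_n^d)$.

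It remains to absorb the finitely many small values of $n$ for which the asymptotic equivalences above are not yet in force: for each fixed such $n$ one has the crude bound $p_n(x,y)\le\pr(\tau_x>n)\le CV(x_1)$ coming from Lemma~\ref{thm:tail_bound_1}, while the factors $n$, $c_n^d$ and $H(\min(c_n,y_1))\ge1$ are bounded away from $0$ and $\infty$, so \eqref{P11} follows after enlarging $C$. I do not expect any real difficulty here; the only point needing a little care is to keep the constants uniform over \emph{all} $n\ge1$ rather than just asymptotically, which is exactly what the small-$n$ step handles.
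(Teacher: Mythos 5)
Your proof is correct and follows essentially the same route as the paper, whose own proof simply states that the bound follows from Lemma~\ref{lem:lem20}, Lemma~\ref{lem:lem19} and \eqref{AsH}; your case split at $y_1\le c_n$ versus $y_1>c_n$ and the conversion $l(n)/n^{1-\rho}\asymp \pr(\tau>n)\asymp H(c_n)/n$ is exactly the intended argument. The small-$n$ step you add is a harmless (and correct) way of making the constants uniform over all $n\ge 1$.
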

        
        \begin{proof}
        The desired estimates follow from (\ref{AsH}) and 
        Lemmata~\ref{lem:lem20} and~\ref{lem:lem19}. 
        \end{proof}

\section{Baxter-Spitzer identity}
    We will need the following multidimensional extension of one-dimensional Baxter-Spitzer identity, see~\cite[Lemma 3.2]{TT2002}.
    \begin{lemma}\label{lem:tanaka}
        For $t\in \R^d$ and $|s|<1$  the following identity
        \[
            1+\sum_{n=1}^\infty s^n \e[e^{it\cdot S(n)};\tau_0>n]
            =\exp\left\{
                \sum_{n=1}^\infty    \frac{s^n}{n}\e\left[e^{it\cdot S(n)}; S_1(n)>0\right]
            \right\}. 
        \]
    \end{lemma}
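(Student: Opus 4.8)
This is the multidimensional Baxter--Spitzer identity, and the plan is to run the classical one-dimensional argument essentially verbatim: since $\tau_0=\min\{k\ge1:S_1(k)\le0\}$ involves only the first coordinate, while the weight $e^{it\cdot S(n)}$ factorises over the increments and depends on the path only through the terminal value $S(n)$ (so it is unaffected by time reversal), nothing in the reasoning is sensitive to $d>1$. All the series in $s$ below have coefficients bounded by $1$ in modulus, hence converge absolutely for $|s|<1$, so it suffices to prove the identity as an equality of analytic functions of $s$, i.e.\ coefficientwise. I would proceed in two steps: (i) a Wiener--Hopf-type factorisation obtained by cutting a path at the last minimum of its first coordinate; (ii) a separation of the two factors using injectivity of the Fourier transform.

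\emph{Step (i).} Fix $n$ and, for a path $S(0),\dots,S(n)$, let $m'$ be the last epoch at which $\min_{0\le j\le n}S_1(j)$ is attained. Splitting $S(n)=S(m')+(S(n)-S(m'))$: the block of increments $X(1),\dots,X(m')$ read in reverse order has the same law and forms a walk of length $m'$ whose first coordinate never exceeds $0$, with terminal value $S(m')$; the block $X(m'+1),\dots,X(n)$ is an independent walk of length $n-m'$ whose first coordinate stays strictly positive; and $e^{it\cdot S(n)}$ splits as the product of the two terminal weights. Taking expectations, summing over the value of $m'$, and forming generating functions in $s$ gives
\begin{equation*}
\frac{1}{1-s\,\e[e^{it\cdot X}]}=A(s,t)\Bigl(1+\sum_{n\ge1}s^{n}\e\bigl[e^{it\cdot S(n)};\tau_0>n\bigr]\Bigr),
\end{equation*}
where $A(s,t):=1+\sum_{m\ge1}s^{m}\e\bigl[e^{it\cdot S(m)};\,S_1(j)\le0\text{ for }1\le j\le m\bigr]$.

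\emph{Step (ii).} Taking logarithms and using $\e[e^{it\cdot S(n)}]=\e[e^{it\cdot S(n)};S_1(n)>0]+\e[e^{it\cdot S(n)};S_1(n)\le0]$, the relation of Step (i) becomes
\begin{align*}
&\log A(s,t)+\log\Bigl(1+\sum_{k\ge1}s^{k}\e[e^{it\cdot S(k)};\tau_0>k]\Bigr)\\
&\qquad=\sum_{n\ge1}\frac{s^{n}}{n}\e[e^{it\cdot S(n)};S_1(n)>0]+\sum_{n\ge1}\frac{s^{n}}{n}\e[e^{it\cdot S(n)};S_1(n)\le0].
\end{align*}
Expanding the logarithm on the left and merging expectations over disjoint increment blocks by independence, the coefficient of $s^{n}$ in $\log\bigl(1+\sum_k s^{k}\e[e^{it\cdot S(k)};\tau_0>k]\bigr)$ has the form $\e[e^{it\cdot S(n)}h_n]$ with $h_n=h_n(S(1),\dots,S(n))$ a $t$-free finite combination, with rational coefficients, of indicators of events contained in $\{\tau_0>n\}\subseteq\{S_1(n)>0\}$; symmetrically, the coefficient of $s^{n}$ in $\log A$ is $\e[e^{it\cdot S(n)}\widetilde h_n]$ with $\widetilde h_n$ supported on $\{S_1(n)\le0\}$. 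Equating the $s^{n}$-coefficients thus reads $\widehat{\mu^{+}_n}=-\widehat{\mu^{-}_n}$, where $\mu^{+}_n,\mu^{-}_n$ are finite signed measures on $\R^d$ carried, respectively, by the open half-space $\{x_1>0\}$ and by its complement $\{x_1\le0\}$. By injectivity of the Fourier transform on finite signed measures, $\mu^{+}_n=-\mu^{-}_n$; since their supports are disjoint, both vanish, so the $s^{n}$-coefficient of $\log\bigl(1+\sum_k s^{k}\e[e^{it\cdot S(k)};\tau_0>k]\bigr)$ equals $\tfrac1n\e[e^{it\cdot S(n)};S_1(n)>0]$. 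Summing over $n$ yields the claim (and at the same time the complementary identity $\log A(s,t)=\sum_{n\ge1}\tfrac{s^n}{n}\e[e^{it\cdot S(n)};S_1(n)\le0]$).

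Step (i) is routine and entirely dimension-free. The crux is the bookkeeping in Step (ii): one must verify that the $s^{n}$-coefficient of the logarithm really is a $t$-independent expectation of $e^{it\cdot S(n)}$ against indicators of events lying inside $\{S_1(n)>0\}$ — this is exactly where the one-dimensional, half-space nature of $\tau_0$ enters — after which the separation of the two factors is forced by Fourier uniqueness on $\R^d$. An equivalent route replaces Step (ii) by Sparre Andersen's cycle lemma applied to the sequence $S_1(1),\dots,S_1(n)$; see also \cite[Lemma 3.2]{TT2002}.
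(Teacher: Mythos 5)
Your argument is correct. Note, however, that the paper does not prove this lemma at all: it simply cites Lemma~3.2 of Tamura and Tanaka \cite{TT2002}, where the analogous identity is established for multidimensional L\'evy processes. What you have written is a self-contained proof along the classical Baxter--Spitzer/Wiener--Hopf lines, correctly adapted to the half-space setting: the factorisation at the last minimum of the first coordinate in Step (i) is exactly the usual duality argument (reversing the first block turns ``$S_1(m')$ is a minimum over $[0,m']$'' into ``the reversed walk stays in $\{x_1\le 0\}$'' while preserving the terminal value, and the i.i.d.\ structure makes the reversed block equidistributed), and your bookkeeping in Step (ii) is sound --- for a composition $k_1+\dots+k_j=n$ the event that every block is relatively positive is indeed contained in $\{\tau_0>n\}\subseteq\{S_1(n)>0\}$, so the $s^n$-coefficient of the logarithm is the Fourier transform of a finite signed measure carried by the open half-space, and Fourier uniqueness together with disjointness of supports forces the two sides to separate. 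Working coefficientwise also sidesteps any branch-of-logarithm issues, and absolute convergence for $|s|<1$ upgrades the formal identity to an identity of functions. So your proof is a valid (and arguably more informative) substitute for the citation; the only thing it does not give beyond the cited reference is the continuous-time version proved there.
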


    We will now  follow closely~\cite{VW2009}. 
    Put 
    \[
        B_n(y):=\pr(S(n)< y; \tau_x>n).  
    \]
    and $b_n(y):= p_n(0,y)$. 
    Lemma~15 of~\cite{VW2009} extends as follows 
    \begin{lemma}\label{lem:lema15}
        The sequence of functions $\{\, B_n(y), n\ge 1\,\}$  satisfies the recurrence equations 
        \begin{equation}\label{eq:40}
            nB_n(y) = \pr(S(n)<y, S_1(n)>0)
            +\sum_{k=1}^{n-1}
            \int_{\R^{d}}
            \pr(S(k)<y-z, S_1(k)>0) dB_{n-k}(z)
        \end{equation}
        and 
        \begin{equation}\label{eq:41}
            nB_n(y) = \pr(S(n)<y, S_1(n)>0)
            +\sum_{k=1}^{n-1}
            \int_{\R^{d}}
            B_{n-k}(y-z)
            \pr(S(k)\in dz, S_1(k)>0). 
        \end{equation}
    \end{lemma}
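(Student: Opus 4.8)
The plan is to derive both identities simultaneously from the Baxter--Spitzer identity of Lemma~\ref{lem:tanaka} by comparing coefficients of $s^n$ in a suitable generating-function identity, exactly mirroring the one-dimensional argument in~\cite[Lemma~15]{VW2009}. First I would introduce the Fourier transforms
\[
    \widehat B_n(t):=\e[e^{it\cdot S(n)};\tau_0>n],\qquad
    \widehat q_k(t):=\e[e^{it\cdot S(k)};S_1(k)>0],
\]
so that the generating function $F(s,t):=1+\sum_{n\ge1}s^n\widehat B_n(t)$ satisfies, by Lemma~\ref{lem:tanaka}, $F(s,t)=\exp\{Q(s,t)\}$ with $Q(s,t):=\sum_{n\ge1}\frac{s^n}{n}\widehat q_n(t)$. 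Differentiating in $s$ gives $\partial_s F=F\cdot\partial_s Q$, i.e.
\[
    \sum_{n\ge1} n s^{n-1}\widehat B_n(t)
    =\Bigl(1+\sum_{m\ge1}s^m\widehat B_m(t)\Bigr)\Bigl(\sum_{k\ge1}s^{k-1}\widehat q_k(t)\Bigr).
\]
Extracting the coefficient of $s^{n-1}$ yields
\[
    n\widehat B_n(t)=\widehat q_n(t)+\sum_{k=1}^{n-1}\widehat q_k(t)\,\widehat B_{n-k}(t),
\]
which is the Fourier transform of both \eqref{eq:40} and \eqref{eq:41}: the product $\widehat q_k(t)\widehat B_{n-k}(t)$ corresponds on one hand to the convolution $\int \pr(S(k)\in dz,S_1(k)>0)\,B_{n-k}(y-z)$, giving \eqref{eq:41}, and on the other hand (writing the same convolution with the roles of the measure and the distribution function interchanged, using that $B_{n-k}$ is the distribution function of the sub-probability measure $b_{n-k}$) to $\int \pr(S(k)<y-z,S_1(k)>0)\,dB_{n-k}(z)$, giving \eqref{eq:40}.

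The remaining work is to justify passing from the identity of Fourier transforms back to an identity of (sub-)distribution functions, and to check the $k=n$ boundary term. For the inversion one notes that all the measures involved are finite (indeed sub-probability) measures on $\R^d$, so their Fourier transforms determine them uniquely; since both sides of the claimed identities \eqref{eq:40}–\eqref{eq:41} are right-continuous functions of $y$ of bounded variation whose associated signed measures have matching Fourier transforms, the identities hold for every $y\in\R^d$. The ``inhomogeneous'' term $\pr(S(n)<y,S_1(n)>0)$ is precisely the $k=n$ contribution (with $B_0\equiv 1$, i.e. the convolution against the unit mass at $0$), which is why the sum in \eqref{eq:40}–\eqref{eq:41} runs only to $n-1$.

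The main obstacle is bookkeeping rather than anything deep: one must be careful that differentiating the generating function term by term in $s$ is legitimate (this is immediate for $|s|<1$ since $|\widehat B_n(t)|\le\pr(\tau_0>n)\le 1$ and $|\widehat q_k(t)|\le 1$, so all series converge absolutely and uniformly on compact subsets of the unit disc), and that the two ways of writing the convolution—Stieltjes integration against $dB_{n-k}$ versus against the measure $\pr(S(k)\in dz,S_1(k)>0)$—genuinely coincide, which is a Fubini/integration-by-parts argument valid because both factors are monotone in each coordinate and bounded. No new estimates beyond those already recorded are needed; the lemma is a formal consequence of Lemma~\ref{lem:tanaka} together with uniqueness of Fourier transforms of finite measures.
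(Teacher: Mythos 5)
Your proposal is correct and takes essentially the same route as the paper, which proves the lemma only by reference to Lemma~15 of \cite{VW2009}: that argument is exactly the one you give, namely differentiating the Baxter--Spitzer identity (here the multidimensional version of Lemma~\ref{lem:tanaka}) in $s$, comparing coefficients of $s^{n-1}$, and inverting the Fourier transforms of the resulting finite measures, with \eqref{eq:40} and \eqref{eq:41} being the two equivalent ways of writing the distribution function of the convolution.
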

    The proof is analogous to the proof of Lemma~15 of~\cite{VW2009}.
    
    To deal with random walks started at  an arbitrary point 
    we will prove 
    Lemma~\ref{lem.arbitrary.starting point} below that 
    extends (17) in~\cite{Doney2012}.  
    Put 
    \[
        p_n^+(0,dy):=\pr(S(n)\in dy, \tau^+>n).
    \]
    We will slightly abuse the notation and write 
    \[
        p_n(x,dy)=\pr(x+S(n)\in dy, \tau_x>n).
    \]

    \begin{lemma}\label{lem.arbitrary.starting point}
    For $x\in\overline{\mathbb H^+} ,y \in \mathbb H^+$ we have 
    \begin{align}
        \label{eq:g.arbitrary.starting.point}
    p_n(x,dy)&= \sum_{k=0}^n  
    \int_{(0,x_1\wedge y_1 ] \times \R^{d-1}}
    p_k^+(0, dz-x)
        p_{n-k}(0, dy-z)
    \end{align}
    and for $x\in\overline{\mathbb H^+} \cap \Z ,y \in \mathbb H^+\cap \Z$
    \begin{align}
        \label{eq:g.arbitrary.starting.point.interval}
    p_n(x,y)&=\sum_{k=0}^n 
        \int_{(0,x_1\wedge (y_1+1) ] \times \R^{d-1}}
        p_k^+(0,dz-x)b_{n-k}(y-z).
    \end{align}
    \end{lemma}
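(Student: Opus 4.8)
The plan is to obtain both identities from one pathwise decomposition: cut a trajectory started at $x$ and killed on leaving $\mathbb H^+$ at the \emph{last} epoch $k\in\{0,1,\dots,n\}$ at which $j\mapsto x_1+S_1(j)$ attains its minimum $\mu$ over $\{0,1,\dots,n\}$, and put $z:=x+S(k)$, so that $z_1=\mu$. On the event $\{\tau_x>n,\ x+S(n)\in dy\}$ one has $0<\mu\le x_1\wedge y_1$: indeed $\mu\le x_1$ since $x_1+S_1(0)=x_1$; $\mu>0$ since $x_1+S_1(j)>0$ for $1\le j\le n$ and $x_1>0$; and $\mu\le y_1$ since $k$ being the \emph{last} minimal epoch forces $x_1+S_1(i)>\mu$ for every $i>k$, hence in particular $y_1\ge\mu$. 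This is the origin of the range $z_1\in(0,x_1\wedge y_1]$ in \eqref{eq:g.arbitrary.starting.point}; when $x_1=0$ necessarily $\mu=0$, $k=0$, $z=x$, and the identity reduces to $p_n(x,dy)=p_n(0,dy-x)$, which one records separately.

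The first ingredient is a time‑reversal identity for $p^+$. Since $\tau^+>k$ means $S_1(j)\le 0$ for $1\le j\le k$, and since the reversal $\widehat{S}(i):=S(k)-S(k-i)$ satisfies $(\widehat{S}(i))_{0\le i\le k}\stackrel{d}{=}(S(i))_{0\le i\le k}$, $\widehat{S}(k)=S(k)$, and $\{S_1(j)\le 0\ \forall j\le k\}=\{\widehat{S}_1(i)\ge \widehat{S}_1(k)\ \forall i\le k\}$, we obtain
\[
  p_k^+(0,dw)=\pr\Bigl(S(k)\in dw,\ \min_{0\le i\le k}S_1(i)=w_1\Bigr).
\]
Shifting by $x$, the factor $p_k^+(0,dz-x)$ is exactly the measure $dz\mapsto\pr\bigl(x+S(k)\in dz,\ \min_{0\le j\le k}(x_1+S_1(j))=z_1\bigr)$, i.e. the endpoint law of the initial segment conditioned to end at its running minimum in the first coordinate.

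Now the decomposition itself. On $\{\tau_x>n,\ x+S(n)\in dy\}$ with last minimal epoch $k$, the increments $X(1),\dots,X(k)$ realise the event just described (measure $p_k^+(0,dz-x)$), while $X(k+1),\dots,X(n)$ realise that $S(k+\cdot)-S(k)$ ends at $y-z$ with first coordinate strictly positive on $\{1,\dots,n-k\}$ — because $x_1+S_1(i)>\mu=z_1$ for $i>k$ — which is $p_{n-k}(0,dy-z)$. The two blocks of increments are independent; conversely any pair $(k,z)$ together with a trajectory of the first block and one of the second block glues to a unique element of $\{\tau_x>n,\ x+S(n)\in dy\}$ with last minimal epoch $k$. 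Summing over $k$ gives \eqref{eq:g.arbitrary.starting.point}. For the lattice version \eqref{eq:g.arbitrary.starting.point.interval} one repeats the argument with terminal condition $x+S(n)\in y+\Delta$, so the second block contributes $b_{n-k}(y-z)$; now the second block's first coordinate lies in $[y_1-z_1,\,y_1-z_1+1)$, and is positive when $n>k$ while the block is trivial when $n=k$, which forces $z_1\le y_1+1$ and makes every term with $y_1<z_1\le y_1+1$ vanish, consistent with the stated range. (Equivalently, integrate \eqref{eq:g.arbitrary.starting.point} over $y+\Delta$.)

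The step I expect to require the most care is the bookkeeping of strict versus weak inequalities across the cut: choosing the \emph{last} minimal epoch is exactly what makes the initial block carry the \emph{weak} minimum condition — precisely the event behind $p_k^+$ after reversal — and the terminal block the \emph{strict} positivity defining $\tau_0$; the first minimal epoch would mismatch these. The rest is routine measure theory: the disintegration in $z$, the half‑open interval $(0,x_1\wedge y_1]$, the contribution of $z_1=y_1$ versus $y_1+1$ in the lattice case, and the $x_1=0$ boundary.
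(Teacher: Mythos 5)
Your proof is correct and follows essentially the same route as the paper's: decompose the path at the (last) epoch where the first coordinate attains its minimum over $\{0,\dots,n\}$, identify the law of the initial segment with $p_k^+(0,\cdot-x)$ via time reversal (duality), and recognise the terminal segment as $p_{n-k}(0,\cdot)$. You merely spell out the duality step and the strict-versus-weak minimum bookkeeping that the paper leaves implicit.
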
    
    \begin{proof}
    Decomposing the trajectory of the walk at the minimum of the first coordinate and using the duality lemma for random walks, we get
    \begin{align*}
        p_n(x,dy)&=\sum_{k=0}^n 
        \int_{(0,x_1\wedge y_1 ] \times \R^{d-1}}
        \pr(x+S(k)\in dz, S_1(k)\le \min_{j\le k} S_{1}(j))\\ 
        &\hspace{1cm}\times 
        \pr(z+S(n-k)\in dy, \tau_0>n-k)\\
        &=\sum_{k=0}^n 
        \int_{(0,x_1\wedge y_1 ] \times \R^{d-1}}
        \pr(x+S(k)\in dz, \tau^+>k)\\ 
        &\hspace{1cm}\times 
        \pr(z+S(n-k)\in dy, \tau_0>n-k)\\
        &=\sum_{k=0}^n 
        \int_{(0,x_1\wedge y_1 ] \times \R^{d-1}}
        p_k^+(0, dz-x)
        p_{n-k}(0, dy-z).
    \end{align*}     
    Integrating~\eqref{eq:g.arbitrary.starting.point},
    we obtain the second equality~\eqref{eq:g.arbitrary.starting.point.interval}.  
    \end{proof}

\section{Probabilities of normal deviations: proof of Theorem~\ref{NormalDev'} for $x=0$.}\label{sec:normal}
Let $H_{y_1}^+ = \{(z_1,z^{(2,d)}): 0<z_1<y_1 )\}$. 
It follows from~\eqref{eq:40} that 
\begin{align}
    \nonumber 
nb_n(y)&=\pr(S(n)\in y+\Delta)\\ 
&\hspace{1cm}+\sum_{k=1}^{n-1} 
\int_{\R^d}\pr(S(k)\in y-z+\Delta, S_1(k)>0) dB_{n-k}(z)
\nonumber\\
&:=R_{\varepsilon }^{(0)}(y)+ R_{\varepsilon }^{(1)}(y)+R_{\varepsilon }^{(2)}(y)+R_{\varepsilon
}^{(3)}(y),
\label{c1}
\end{align}
where, for any fix $\varepsilon \in (0,1/2)$ and with a slight abuse of
notation,
\begin{align*}
R_{\varepsilon }^{(1)}(y)&:=\sum_{k=1}^{\varepsilon n}
\int_{H_{y_1}^+}\pr(S(k)\in  y-z+\Delta, S_1(k)>0)dB_{n-k}(z),\\
R_{\varepsilon }^{(2)}(y)&:=\sum_{k=\varepsilon n}^{(1-\varepsilon
)n}\int_{H_{y_1}^+}\pr(S(k)\in  y-z+\Delta, S_1(k)>0)dB_{n-k}(z),\\ 
{R}_{\varepsilon }^{(3)}(y)&:=\pr({S}_{n}\in y+\Delta)+\sum_{k=[(1-\varepsilon )n]+1}^{n-1}
\int_{\R^d}\pr(S(k)\in  y-z+\Delta, S_1(k)>0)dB_{n-k}(z)
\end{align*}%
and 
\[
R_\varepsilon^{(0)}(y):= 
\sum_{k=1}^{(1-\varepsilon)n}
\int_{[y_1,y_1+1)\times \R^{d-1}}\pr(S(k)\in  y-z+\Delta, S_1(k)>0)dB_{n-k}(z).
\] 

Fix some $t\in\Z^d$. If $z\in(t+\Delta)$ then 
\begin{align*}
\left\{S(k)\in y-z+\Delta\right\}
&=\left\{S_j(k)\in[y_j-z_j,y_j-z_j+1)\text{ for all }j\right\}\\
&\subseteq\left\{S_j(k)\in[y_j-t_j-1,y_j-t_j+1)\text{ for all }j\right\}\\
&=\left\{S(k)\in y-t-1+2\Delta\right\}.
\end{align*}
Consequently,
$$
\pr(S(k)\in  y-z+\Delta, S_1(k)>0)
\le \pr(S(k)\in  y-t-1+2\Delta, S_1(k)>0)
$$
for all $z\in t+\Delta$.
Applying this estimate, we conclude that, for every $A\subset\R^d$,
\begin{align*}
&\int_A\pr(S(k)\in  y-z+\Delta, S_1(k)>0)dB_{n-k}(z)\\
&\le
\sum_{t\in\Z^d:(t+\Delta)\cap A\neq\emptyset}
\pr(S(k)\in  y-t-1+2\Delta, S_1(k)>0)b_{n-k}(t).
\end{align*}
Combining this estimate with Corollary~\ref{Cmin} with $x=0$, we obtain
\begin{align}
\label{eq:int_A}
\nonumber
&\int_A\pr(S(k)\in  y-z+\Delta, S_1(k)>0)dB_{n-k}(z)\\
&\le
\frac{C}{(n-k)c_{n-k}^d}
\sum_{t\in\Z^d:(t+\Delta)\cap A\neq\emptyset}
H(t_1\wedge c_{n-k})\pr(S(k)\in  y-t-1+2\Delta, S_1(k)>0).
\end{align}
In order to bound ${R}^{(0)}_\varepsilon(y)$ we apply \eqref{eq:int_A} with $A=[y_1,y_1+1)\times \R^{d-1}$:
\begin{align*}
    &{R}^{(0)}_\varepsilon(y)\\
    & \le    
    \sum_{k=1}^{(1-\varepsilon)n}\frac{C}{(n-k)c_{n-k}^d}
    \sum_{t\in\Z^d:t_1\in[y_1-1,y_1+1)}
     H(t_1\wedge c_{n-k})\pr(S(k)\in  y-t-1+2\Delta, S_1(k)>0)\\
    &\le 
    C_\varepsilon \frac{H(y_1\wedge c_{n})}{nc_n^d}
    \sum_{k=1}^{(1-\varepsilon)n}\pr(S_1(k)\in(0,2)).
\end{align*}
Noting that $\pr(S_1(k)\in(0,2))\to0$ as $k\to\infty$, we get 
$$
\frac{c_n^d}{H(c_n)}R^{(0)}_\varepsilon(y)\to0.
$$
Taking into account~\eqref{AsH}, we conclude that  
\begin{equation}
\limsup_{n\rightarrow \infty }\frac{c_{n}^d}{n\pr(\tau>n)}%
R^{(0)}_\varepsilon(y)=0.  \label{c3}
\end{equation}
Using the Stone theorem we obtain 
\begin{align*}
    {R}_{\varepsilon }^{(3)}(y)\le \frac{C}{c_n^d}
    \left(
        1+ \sum_{k=1}^{\varepsilon n} \pr(\tau>k)
    \right).
\end{align*}
Further, by (\ref{integ1}),
\begin{equation*}
\sum_{k=0}^{\varepsilon n}\pr(\tau>k)\sim \rho
^{-1}\varepsilon ^{\rho }n\pr(\tau>n)\quad \text{as }%
n\rightarrow \infty .
\end{equation*}%
As a result,  we obtain
\begin{equation}
\limsup_{n\rightarrow \infty }\frac{c_{n}^d}{n\pr(\tau>n)}%
\sup_{y\in \R^d}R_{\varepsilon }^{(3)}(y)\leq C\varepsilon ^{\rho }.  \label{n1}
\end{equation}
Applying \eqref{eq:int_A} with $A=[0,y_1)\times \R^{d-1}$, we get 
\begin{align*}
    {R}_{\varepsilon }^{(1)}(y)
    \le \frac{CH(c_n)}{nc_n^d}
    \sum_{k=1}^{\varepsilon n}
     \pr(S_1(k)\in (0,y_1+2))    
     \le \varepsilon \frac{CH(c_n)}{c_n^d}
\end{align*}
From this estimate and (\ref{AsH}) we deduce
\begin{equation}
\limsup_{n\rightarrow \infty }\frac{c_{n}^d}{n\mathbf{P}(\tau>n)}%
\sup_{y\in \mathbb H^+}R_{\varepsilon }^{(1)}(y)\leq C\varepsilon .  \label{n2}
\end{equation}

Thus, in the non-lattice case we combine the Stone local limit theorem with
the first equality in (\ref{meander1}) and obtain, uniformly in $y\in \mathbb H^+$, 
\begin{align*}
R_{\varepsilon }^{(2)}(y)& =\sum_{k=[\varepsilon n]+1}^{[(1-\varepsilon )n]}%
\frac{1}{c_{n-k}^d}\int_{0}^{y_1}\int_{\R^{d-1}}g_{\alpha,\sigma}\left( \frac{y-z}{c_{n-k}}%
\right) dB_{k}(z)+o\left( \frac{1}{c_{n\varepsilon }^{d}}\sum_{k=1}^{n}\pr(S_{1}(k)<y_1,\tau>k)%
\right)  \\
& =\sum_{k=[\varepsilon n]+1}^{[(1-\varepsilon )n]}\frac{\mathbf{P}(\tau
^{-}>k)}{c_{n-k}^d}\int_{0}^{y_1/c_k}\int_{\R^{d-1}}g_{\alpha,\sigma}\left( \frac{y-c_{k}u}{%
c_{n-k}}\right) \mathbf{P}(M_{\alpha,\sigma}\in du) \\
& \hspace{2cm}+o\left( \frac{1}{c_{n\varepsilon }^d}\sum_{k=1}^{n}\pr(S_{1}(k)<y_1,\tau>k)+%
\sum_{k=1}^{n-1}\frac{\mathbf{P}(\tau>k)}{c_{n\varepsilon }^d}\right) .
\end{align*}%
According to (\ref{integ1}),
\begin{equation*}
    \sum_{k=1}^{n}\pr(S^{(1)}<y_1,\tau>k)\leq \sum_{k=1}^{n}\pr(\tau>k)\leq Cn%
\mathbf{P}(\tau>n).
\end{equation*}%
Hence, 
\begin{align*}
    R_{\varepsilon }^{(2)}(y)
    & =\sum_{k=[\varepsilon n]+1}^{[(1-\varepsilon )n]}\frac{\mathbf{P}(\tau
    ^{-}>k)}{c_{n-k}^d}\int_{0}^{x_1/c_k}\int_{\R^{d-1}}g_{\alpha,\sigma}\left( \frac{x-c_{k}u}{%
    c_{n-k}}\right) \mathbf{P}(M_{\alpha,\sigma}\in du) \\
    & \hspace{2cm}+o\left( 
    \frac{n\mathbf{P}(\tau>n)}{c_{n\varepsilon }^d}\right) .
\end{align*}
Since $c_{k}$ and $\mathbf{P}(\tau>k)$ are regularly varying and $%
g_{\alpha,\sigma}(x)$ is uniformly continuous in $\R^d$, we
let, for brevity, $v=x/c_{n}$ and continue the previous estimates for $%
R_{\varepsilon }^{(2)}(y)$ with
\begin{align*}
    & =\frac{\mathbf{P}(\tau>n)}{c_{n}^d}\sum_{k=[\varepsilon
    n]+1}^{[(1-\varepsilon )n]}\frac{(k/n)^{\rho -1}}{(1-k/n)^{1/\alpha }}%
    \int_{0}^{v^{(1)}/(k/n)^{1/\alpha }}\int_{\R^{d-1}}
    g_{\alpha,\sigma}\left( \frac{%
    v-(k/n)^{1/\alpha }u}{(1-k/n)^{1/\alpha }}\right) \mathbf{P}(M_{\alpha,\sigma}\in du) \\
    & \hspace{5cm}+o\left( \frac{n\mathbf{P}(\tau>n)}{c^d_{n\varepsilon }}%
    \right)  \\
    & =\frac{n\mathbf{P}(\tau>n)}{c_{n}^d}f(\varepsilon ,1-\varepsilon
    ;v)+o\left( \frac{n\mathbf{P}(\tau>n)}{c^d_{n\varepsilon }}\right),
    \end{align*}%
where, for $0\leq w_{1}\leq w_{2}\leq 1$,
    \begin{equation}
    f(w_{1},w_{2};v):=\int_{w_{1}}^{w_{2}}\frac{t^{\rho -1}dt}{(1-t)^{1/\alpha }}%
    \int_{0}^{v^{(1)}/t^{1/\alpha }}\int_{\R^d}g_{\alpha,\sigma}\left( \frac{%
    v-t^{1/\alpha }u}{(1-t)^{1/\alpha }}\right) \mathbf{P}(M_{\alpha,\sigma}\in
    du).  \label{DenF}
    \end{equation}
Observe that, by boundedness of $g_{\alpha,\sigma}\left( y\right)$,
    \begin{equation*}
    f(0,\varepsilon ;v)\leq C\int_{0}^{\varepsilon }t^{\rho -1}dt\leq
    C\varepsilon ^{\rho }.
    \end{equation*}%
    Further, it follows from (\ref{Majorante}) that $\int \phi (u)\pr
    (M_{\alpha,\sigma}\in du)\leq C\int \phi (u)du$ for every non-negative
    integrable function $\phi $. Therefore,
    \begin{align*}
    & f(1-\varepsilon ,1;v) \\
    & \leq C\int_{1-\varepsilon }^{1}\frac{t^{\rho -1}dt}{(1-t)^{1/\alpha }}%
    \int_{0}^{v^{(1)}/t^{1/\alpha }}\int_{\R^{d-1}}g_{\alpha ,\sigma }\left( \frac{%
    v-t^{1/\alpha }u}{(1-t)^{1/\alpha }}\right) du=\left( z=\frac{v-t^{1/\alpha
    }u}{(1-t)^{1/\alpha }}\right)  \\
    & =C\int_{1-\varepsilon }^{1}t^{\rho -1-1/\alpha
    }dt\int_{0}^{v^{(1)}/(1-t)^{1/\alpha }}\int_{\R^{d-1}}g_{\alpha ,\sigma }\left(
    z\right) dz\leq C\varepsilon .
    \end{align*}%
    As a result we have
\begin{equation}
\limsup_{n\rightarrow \infty }\sup_{y\in \mathbb H^+}\left\vert \frac{c_{n}^d}{n\mathbf{P}%
(\tau>n)}R_{\varepsilon }^{(2)}(y)-f(0,1;y/c_{n})\right\vert \leq
C\varepsilon ^{\rho }.  \label{n3}
\end{equation}%
Combining (\ref{c3}) -- (\ref{n3}) with representation (\ref{c1}) leads to
\begin{equation}
\limsup_{n\rightarrow \infty }\sup_{y\in\mathbb H^+}\left\vert \frac{c^d_{n}}{\mathbf{P}%
(\tau>n)}b_{n}(y)-f(0,1;y/c_{n})\right\vert \leq C\varepsilon ^{\rho }.
\label{n4}
\end{equation}%
Since $\varepsilon >0$ is arbitrary, it follows that, as $n\rightarrow
\infty $,%
\begin{equation}
\frac{c_{n}^d}{\mathbf{P}(\tau>n)}b_{n}(y)-f(0,1;y/c_{n})\rightarrow 0
\label{Fconv}
\end{equation}%
uniformly in $y\in \mathbb H^+$. 
Recalling (\ref{meander1}), we deduce by integration of (%
\ref{Fconv}) and evident transformations that
\begin{equation}
\int_{u+r\Delta}f(0,1;z)dz=\mathbf{P}(M_{\alpha,\sigma}\in u+r\Delta)  \label{deniq}
\end{equation}%
for all $u\in\mathbb H^+, r>0$. This means, in particular, that the
distribution of $M_{\alpha,\sigma}$ is absolutely continuous.
Furthermore,
it is not difficult to see that $z\mapsto f(0,1;z)$ is a continuous mapping.
Hence, in view of (\ref{deniq}), we may consider $f(0,1;z)$ as a continuous
version of the density of the distribution of $M_{\alpha,\sigma}$ and let $%
p_{M_{\alpha,\sigma}}(z):=$ $f(0,1;z).$ 
This and (\ref{Fconv}) \ imply the
statement of Theorem \ref{NormalDev'} for $\Delta =[0,1)^d$. 
To establish the
desired result for arbitrary $r\Delta, r>0$ it suffices to consider the random
walk $S(n)/r $ and to observe that
\begin{equation*}
c_{n}^{r }:=\inf \left\{ u\geq 0:\frac{1}{u^{2}}\int_{-u}^{u}x^{2}\pr%
\left( \frac{|X|}{r }\in dx\right) \right\} =c_{n}/r^d.
\end{equation*}


\section{Probabilities of normal deviations when random walks start at an arbitrary starting point}
\begin{proof}[Proof of Theorem~\ref{Cru}]
Due to the shift invariance in any direction orthogonal to
$(1,0,\ldots,0)$ we may consider, without loss of generality, only the case when the random walk starts at $x=(x_1,0,\ldots,0)$ with some $x_1>0$.

As we have already mentioned before, repeating the arguments from \cite{Doney1985} one can easily show that $\pr(\frac{S(n)}{c_n}\in\cdot|\tau>n)$ and $\pr(\frac{S(n)}{c_n}\in\cdot|\tau^+>n)$ converge weakly. Recall also that the limit of $\pr(\frac{S(n)}{c_n})\in\cdot|\tau>n)$ is denoted by $M_{\alpha,\sigma}$.

Fix an arbitrary Borel set $A\subset\mathbb{H}^+$.
According to Lemma~\ref{lem.arbitrary.starting point},
\begin{align}
\nonumber
\label{eq:thm1.1}
&\pr\left(\frac{x+S(n)}{c_n}\in A;\tau_x>n\right)\\
&\hspace{1cm}=\sum_{k=0}^n\int_{(0,x_1]\times\R^{d-1}}p^+_k(dz-x)
\pr\left(\frac{z+S(n-k)}{c_{n-k}}\in A;\tau>n-k\right).
\end{align}
Choose now a sequence $\{N_n\}$ of integers satisfying 
\begin{equation}
\label{eq:thm1.2}
N_n=o(n)
\quad\text{and}\quad 
\frac{\delta_nc_n}{c_{N_n}}\to0.
\end{equation}
We start our analysis of the sum in \eqref{eq:thm1.1} by noting that 
\begin{align*}
&\sum_{k=N_n+1}^{n/2}\int_{(0,x_1]\times\R^{d-1}}p^+_k(dz-x)
\pr\left(\frac{z+S(n-k)}{c_{n-k}}\in A;\tau>n-k\right) \\
&\hspace{1cm}\le
\sum_{k=N_n+1}^{n/2}\pr(S_1(k)\ge -x_1;\tau^+>k)\pr(\tau>n-k)\\
&\hspace{1cm}\le\pr(\tau>n/2)
\sum_{k=N_n+1}^{n/2}\pr(S_1(k)\ge -x_1;\tau^+>k).
\end{align*}
Applying the second statement of Lemma~\ref{lem:lem20} to the walk 
$S_1(k)$ and recalling that the sequence
$\{c_k\}$ is regularly varying, we obtain 
\begin{align}
\label{eq:thm1.3}
\nonumber
\sum_{k=N_n+1}^{n/2}\pr(S_1(k)\ge -x_1;\tau^+>k)
&\le \sum_{k=N_n+1}^{\infty}\pr(S_1(k)\ge -x_1;\tau^+>k)\\
\nonumber
&\le \sum_{k=N_n+1}^{\infty} \frac{C_1x_1V(x_1)}{kc_k}
\le \frac{C_2x_1V(x_1)}{c_{N_n}}\\
&\le \frac{C_2\delta_n c_nV(x_1)}{c_{N_n}}.
\end{align}
taking into account \eqref{eq:thm1.2}, we conclude that 
\begin{equation}
\label{eq:thm1.4}
\sum_{k=N_n+1}^{\infty}\pr(S_1(k)\ge -x_1;\tau^+>k)=o(V(x_1))
\end{equation}
uniformly in $x_1\le \delta_nc_n$.
Consequently,
\begin{align}
\label{eq:thm1.5} 
\nonumber
&\sum_{k=N_n+1}^{n/2}\int_{(0,x_1]\times\R^{d-1}}p^+_k(dz-x)
\pr\left(\frac{z+S(n-k)}{c_{n-k}}\in A;\tau>n-k\right)\\
&\hspace{2cm}=o(V(x_1)\pr(\tau>n))
\end{align}
uniformly in $x_1\le \delta_nc_n$.

Using once again Lemma~\ref{lem:lem20}, we obtain
\begin{align*}
&\sum_{k=n/2}^{n}\int_{(0,x_1]\times\R^{d-1}}p^+_k(dz-x)
\pr\left(\frac{z+S(n-k)}{c_{n-k}}\in A;\tau>n-k\right)\\
&\hspace{1cm}\le
\sum_{k=n/2}^{n}\pr(S_1(k)\ge -x_1;\tau^+>k)\pr(\tau>n-k)\\
&\hspace{1cm}\le
\sum_{k=n/2}^{n}\frac{C_1x_1V(x_1)}{kc_k}\pr(\tau>n-k)
\le\frac{C_2x_1V(x_1)}{nc_n}\sum_{j=0}^n\pr(\tau>j).
\end{align*}
Since $\pr(\tau>j)$ is also regularly varying, we conclude that
\begin{align}
\label{eq:thm1.6}
\nonumber
&\sum_{k=n/2}^{n}\int_{(0,x_1]\times\R^{d-1}}p^+_k(dz-x)
\pr\left(\frac{z+S(n-k)}{c_{n-k}}\in A;\tau>n-k\right)\\
&\hspace{1cm}\le C\frac{x_1V(x_1)}{c_n}\pr(\tau>n)
=o(V(x_1)\pr(\tau>n))
\end{align}
uniformly in$x_1\le\delta_n c_n$.

Choose now a sequence $\varepsilon_n\to0$ so that 
$c_{N_n}=o(\varepsilon_n c_n)$.
By the convergence in the case of start at zero,
$$
\pr\left(\frac{z+S(n-k)}{c_{n-k}}\in A;\tau>n-k\right)
\sim \pr(M_{\alpha,\sigma}\in A)\pr(\tau>n)
$$
uniformly in $k\le N_n$ and $z\in B_{\varepsilon_n c_n}(0)$,
where $B_r(y)$ denotes the ball of radius $r$ with center at $y$.
Therefore,
\begin{align*}
 &\sum_{k=0}^{N_n}
 \int_{(0,x_1]\times\R^{d-1}\cap B_{\varepsilon_n c_n}(0)}p^+_k(dz-x)
\pr\left(\frac{z+S(n-k)}{c_{n-k}}\in A;\tau>n-k\right)\\
&\hspace{0.5cm}
=\left[\pr(M_{\alpha,\sigma}\in A)+o(1)\right]\pr(\tau>n)
\sum_{k=0}^{N_n}
\pr(S_1(k)\ge-x_1,|S(k)|<\varepsilon_nc_n;\tau^+>k)\\
&\hspace{0.5cm}
=\left[\pr(M_{\alpha,\sigma}\in A)+o(1)\right]\pr(\tau>n)
\sum_{k=0}^{N_n}
\pr(S_1(k)\ge-x_1;\tau^+>k)\\
&\hspace{1.5cm}
+O\left(\pr(\tau>n)\sum_{k=0}^{N_n}
\pr(S_1(k)\ge-x_1,|S(k)|\ge\varepsilon_nc_n;\tau^+>k)\right).
\end{align*}
By the definition of $V$,
$$
\sum_{k=0}^{N_n}\pr(S_1(k)\ge-x_1;\tau^+>k)=
V(x_1)-\sum_{k=N_n+1}^\infty
\pr(S_1(k)\ge-x_1;\tau^+>k).
$$
Using here \eqref{eq:thm1.4}, we obtain 
\begin{align}
\label{eq:thm1.7}
\nonumber
&\sum_{k=0}^{N_n}
 \int_{(0,x_1]\times\R^{d-1}\cap B_{\varepsilon_n c_n}(0)}p^+_k(dz-x)
\pr\left(\frac{z+S(n-k)}{c_{n-k}}\in A;\tau>n-k\right)\\
\nonumber
&\hspace{0.5cm}
=\left[\pr(M_{\alpha,\sigma}\in A)+o(1)\right]V(x_1)\pr(\tau>n)\\
&\hspace{1.5cm}
+O\left(\pr(\tau>n)\sum_{k=0}^{N_n}
\pr(S_1(k)\ge-x_1,|S(k)|\ge\varepsilon_nc_n;\tau^+>k)\right)
\end{align}
uniformly in $x_1\le\delta_nc_n$.

Furthermore,
\begin{align}
\label{eq:thm1.8} 
\nonumber
&\sum_{k=0}^{N_n}
 \int_{(0,x_1]\times\R^{d-1}\cap B^c_{\varepsilon_n c_n}(0)}p^+_k(dz-x)
\pr\left(\frac{z+S(n-k)}{c_{n-k}}\in A;\tau>n-k\right)\\
\nonumber
&\hspace{1cm}
\le \sum_{k=0}^{N_n} 
\pr(S_1(k)\ge-x_1,|S(k)|\ge\varepsilon_nc_n;\tau^+>k)\pr(\tau>n-k)\\
&\hspace{1cm}
\le C\pr(\tau>n)\sum_{k=0}^{N_n}
\pr(S_1(k)\ge-x_1,|S(k)|\ge\varepsilon_nc_n;\tau^+>k).
\end{align}
Having all these estimates one can easily see that it suffices to show that, uniformly in $x_1\le\delta_nc_n$,
\begin{equation}
\label{eq:thm1.9}
\sum_{k=0}^{N_n}
\pr(S_1(k)\ge-x_1,|S(k)|\ge\varepsilon_nc_n;\tau^+>k)=o(V(x_1)). 
\end{equation}
Indeed, applying \eqref{eq:thm1.9} to \eqref{eq:thm1.7} and \eqref{eq:thm1.8} leads us to the equality 
\begin{align*}
 &\sum_{k=0}^{N_n}\int_{(0,x_1]\times\R^{d-1}}p^+_k(dz-x)
\pr\left(\frac{z+S(n-k)}{c_{n-k}}\in A;\tau>n-k\right)\\
&\hspace{0.5cm}
=\left[\pr(M_{\alpha,\sigma}\in A)+o(1)\right]V(x_1)\pr(\tau>n).
\end{align*}
Plugging this and estimates \eqref{eq:thm1.5}, \eqref{eq:thm1.6}
into \eqref{eq:thm1.1}, we get 
$$
\pr\left(\frac{x+S(n)}{c_{n}}\in A;\tau_x>n\right)
=\left[\pr(M_{\alpha,\sigma}\in A)+o(1)\right]V(x_1)\pr(\tau>n)
$$
uniformly in $x_1\le \delta_nc_n$.  Recalling that 
$\pr(\tau_x>n)\sim V(x_1)\pr(\tau>n)$, we have uniform in 
$x_1\le\delta_nc_n$ convergence
$$
\pr\left(\frac{x+S(n)}{c_{n}}\in A;\tau_x>n\right)
\to \pr(M_{\alpha,\sigma}\in A).
$$

To prove \eqref{eq:thm1.9} we fix some $R\ge 1$ and notice that 
\begin{align*}
&\sum_{k=0}^{N_n}
\pr(S_1(k)\ge-x_1,|S(k)|\ge\varepsilon_nc_n;\tau^+>k)\\
&\hspace{1cm}\le \sum_{k=0}^{R}
\pr(|S(k)|\ge\varepsilon_nc_n;\tau^+>k)
+\sum_{k=R+1}^{N_n}
\pr(S_1(k)\ge-x_1;\tau^+>k).
\end{align*}
Similar to \eqref{eq:thm1.3},
$$
\sum_{k=R+1}^{N_n}\pr(S_1(k)\ge-x_1;\tau^+>k)
\le C\frac{x_1V(x_1)}{c_R}.
$$
Furthermore, using the convergence of measures $\pr(\frac{S(n)}{c_n}\in\cdot|\tau^+>n)$, we have 
$$
\sum_{k=0}^{R}\pr(|S(k)|\ge\varepsilon_nc_n;\tau^+>k)
=o\left(\sum_{k=0}^{R}\pr(\tau^+>k)\right)
=o(R\pr(\tau^+>R))
$$
uniformly in $R\le N_n$. Fix some $\gamma>0$ and take $R$ such that $c_{R-1}<x_1/\gamma$ and $c_R\ge x_1/\gamma$. Then 
$$
\sum_{k=R+1}^{N_n}\pr(S_1(k)\ge-x_1;\tau^+>k)
\le C\gamma V(x)
$$
and, by Lemma~\ref{Renew2},
$$
R\pr(\tau^+>R)\le CV(c_R)\le C\gamma^{-\alpha(1-\rho)}V(x_1).
$$
Combining these estimates, we conclude that 
$$
\lim_{n\to\infty}\sup_{x_1\le\delta_nc_n}\frac{1}{V(x_1)}
\sum_{k=0}^{N_n}
\pr(S_1(k)\ge-x_1,|S(k)|\ge\varepsilon_nc_n;\tau^+>k)
\le C\gamma.
$$
Since $\gamma$ can be chosen arbitrary small we get \eqref{eq:thm1.9}. Thus, the proof of the theorem is complete.
\end{proof}
\begin{proof}[First proof of Theorem~\ref{NormalDev'}]
\textcolor{blue}{
The give a proof in the non-lattice case only. The lattice case is even simpler.}

As in the proof of Theorem~\ref{Cru}, it suffices to consider the case $x=(x_1,0,\ldots,0)$ with $x_1\in(0,\delta_nc_n]$. The case $x_1=0$ has been considered in Section~\ref{sec:normal}. There we have proven that, uniformly in $y\in\mathbb{H}^+$,
\begin{equation}
\label{eq:thm2.0}
b_n(y)\sim \frac{\pr(\tau>n)}{c_n^d}p_{M_{\alpha,\sigma}}(y/c_n).
\end{equation}

To generalize this relation to the case of positive $x_1$, we first notice that, by Lemma~\ref{lem.arbitrary.starting point},
\begin{align}
\label{eq:thm2.1}
p_n(x,y)=\sum_{k=0}^n\int_{(0,x_1]\times\R^{d-1}}p^+_k(dz-x)
b_{n-k}(y-z).
\end{align}
Fix some $\gamma\in(0,1/2)$. The analysis of 
$$
\sum_{k=0}^{(1-\gamma)n}\int_{(0,x_1]\times\R^{d-1}}p^+_k(dz-x)
b_{n-k}(y-z)
$$
is very similar to our proof of Theorem~\ref{Cru}. If $k\le(1-\gamma)n$ then we have the bound
$$
b_{n-k}(y-z)\le C(\gamma)\frac{\pr(\tau>n)}{c_n^d},
$$
which is an immediate consequence of \eqref{eq:thm2.0}. Using this uniform bound and the local limit theorem \eqref{eq:thm2.0}
directly, and repeating our arguments from the proof of Theorem~\ref{Cru}, one can easily obtain 
\begin{equation}
\label{eq:thm2.2}
\sup_y\left|\frac{c_n^d}{\pr(\tau>n)}\sum_{k=0}^{(1-\gamma)n}\int_{(0,x_1]\times\R^{d-1}}p^+_k(dz-x)
b_{n-k}(y-z)-p_{M_{\alpha,\sigma}}(y/c_n)\right|\to0
\end{equation}
uniformly in $x_1\le\delta_n c_n$.

For $k>(1-\gamma)n$ the mentioned above bound for $b_{n-k}$
is useless, and one needs an additional argument.  We first notice that 
\begin{align*}
&\int_{(0,x_1]\times\R^{d-1}}p^+_k(dz-x)b_{n-k}(y-z)\\
&\hspace{1cm}
\le\sum_{u\in [0,x_1]\times\R^{d-1}\cap\Z^d}p_k^+(u-x+\Delta)
\max_{z\in u+\Delta}b_{n-k}(y-z)\\
&\hspace{1cm}
\le\sum_{u\in [0,x_1]\times\R^{d-1}\cap\Z^d}p_k^+(u-x+\Delta)
\pr(S(n-k)\in y-u-\mathbf{1}+2\Delta;\tau>n-k).
\end{align*}
Applying now the second statement of Lemma~\ref{lem:lem20}, we get 
\begin{align*}
&\int_{(0,x_1]\times\R^{d-1}}p^+_k(dz-x)b_{n-k}(y-z)\\
&\hspace{1cm}\le C\frac{V(x_1)}{kc_k^d}
\sum_{u\in [0,x_1]\times\R^{d-1}\cap\Z^d}
\pr(S(n-k)\in y-u-\mathbf{1}+2\Delta;\tau>n-k)\\
&\hspace{1cm}\le C\frac{V(x_1)}{kc_k^d}\pr(\tau>n-k).
\end{align*}
Consequently,
\begin{align*}
\sum_{k=(1-\gamma)n}^n
\int_{(0,x_1]\times\R^{d-1}}p^+_k(dz-x)b_{n-k}(y-z)
\le C\frac{V(x_1)}{nc_n^d}
\sum_{j=0}^{\gamma n}\pr(\tau>j).
\end{align*}
Noting now that the regular variation of $\pr(\tau>j)$
implies 
$$
\sum_{j=0}^{\gamma n}\pr(\tau>j)
\le C\gamma^{1-\rho}n\pr(\tau>n),
$$
we conclude that
$$
\lim_{n\to\infty}\frac{c_n^d}{\pr(\tau>n)}
\sum_{k=(1-\gamma)n}^n
\int_{(0,x_1]\times\R^{d-1}}p^+_k(dz-x)b_{n-k}(y-z)
\le C\gamma^{1-\rho}V(x_1)
$$
for all $x_1\le \delta_n c_n.$ Plugging this and \eqref{eq:thm2.2} into \eqref{eq:thm2.1} and letting $\gamma\to0$, we get the desired result.
\end{proof}

\begin{proof}[Second proof of Theorem~\ref{NormalDev'}]
    If the local assumption \eqref{eq:local.heavy} holds then we can use an approach similar to that 
    of in~\cite{DW15}, see Theorems 5 and 6 there.  This approach allows one to avoid considering first the special case $x=0$, as it was done in Section~\ref{sec:normal}. Without loss of generality we may assume that $x=(x_1,0,\ldots,0)$.  
    
    We first notice that if $y$ is such that $y_1\le 2\varepsilon c_n$ then, combining Lemma~\ref{lem:lem20} and Lemma~\ref{Renew2},
\begin{equation*}
p_n(x,y)\le C\frac{V(x_1)U(2\varepsilon c_n)}{nc_n^d}
\le CV(x_1)\pr(\tau_x>n)\varepsilon^{\alpha\rho}\frac{1}{c_n^d}. 
\end{equation*}
Thus, uniformly in $y\in\mathbb{H}^+$ with $y\le 2\varepsilon c_n$,
$$
c_n^d\pr(x+S(n)\in y+\Delta|\tau_x>n)\le C\varepsilon^{\alpha\rho}.
$$
Combining this bound with the fact that $p_{M_{\alpha,\sigma}}(z)$
to $0$ as $z\to\partial\mathbb{H}^+$, we conclude that 
\begin{align}
\label{eq:llt_small}
\lim_{\varepsilon\to0} 
\sup_{y\in \mathbb{H}^+:\,y_1\le2\varepsilon c_n}\left|c_{n}^d\pr\left(x+S(n)\in y+\Delta \mid \tau_x>n\right)- {p_{M_{\alpha ,\sigma}}\left(\frac{y-x}{c_{n}}\right)}\right|=0
\end{align}
uniformly in $x$ with $x_1\le\delta_nc_n.$

We next consider large values of $y$. More precisely, we assume that $|y|>3A c_n$ with some $A>1$. In this case we have, by the Markov property at time $m=[n/2]$, 
\begin{align*}
p_n(x,y)&=\int_{I(x)}\pr(x+S(m)\in dz,\tau_x>m)p_{n-m}(z,y)\\
&\hspace{2cm}+\int_{\mathbb{H}^+\setminus I(x)}\pr(x+S(m)\in dz,\tau_x>m)p_{n-m}(z,y),
\end{align*}
where $I(x)=\{z: |z-x|\le Ac_n\}$. 
If $z\in I(x)$ then $|y-z|>Ac_n$ for all sufficiently large values of $n$. Using \eqref{eq:local.ld}, we have 
$$
p_{n-m}(z,y)\le C\frac{n\phi(Ac_n)}{c_n^d}\le CA^{-\alpha}\frac{1}{c_n^d}.
$$
Therefore,
\begin{align*}
c_n^d\int_{I(x)}\pr(x+S(m)\in dz,\tau_x>m)p_{n-m}(z,y) 
\le CA^{-\alpha}\pr(\tau_x>m).
\end{align*}
Furthermore, using the standard concentration function estimate, we have 
\begin{align*}
&c_n^d \int_{\mathbb{H}^+\setminus I(x)}\pr(x+S(n)\in dz,\tau_x>m)p_{n-m}(z,y)\\
&\le C\pr(x+S(m)\notin I(x);\tau_x>m)
\le C\pr(|S(m)|>Ac_n;\tau_x>m).
\end{align*}
Combining these bounds, one gets easily 
\begin{align*}
c_n^d\pr(x+S(n)\in y+\delta|\tau_x>n)
\le C\left(A^{-\alpha}+\pr(|S(m)|>Ac_n|\tau_x>m)\right).
\end{align*}
Applying now the integral limit theorem, we conclude that 
\begin{align}
\label{eq:llt_large}
\lim_{A\to\infty}\limsup_{n\to\infty}
\sup_{y\in \mathbb{H}^+:\,|y|> 3Ac_n}\left|c_{n}^d\pr\left(x+S(n)\in y+\Delta \mid \tau_x>n\right)- {p_{M_{\alpha ,\sigma}}\left(\frac{y-x}{c_{n}}\right)}\right|=0
\end{align}
uniformly in $x$ with $x_1\le \delta_nc_n.$

Thus, it remains to consider $y$ such that $y_1>2\varepsilon c_n$ and $|y|\le 3Ac_n$. To analyse this range of values of $y$ we set 
$m=[(1-\gamma)n]$ with some $\gamma<1/2$. Let  $B_{\varepsilon c_n}(y)$ denote the ball of radius $\varepsilon c_n$ around $y$.
Then, by the Markov property at time $m$, we have 
\begin{align}
\label{eq:llt_normal.1}
\nonumber
p_n(x,y)&=\int_{B_{\varepsilon c_n}(y)}\pr(x+S(m)\in dz,\tau_x>m)p_{n-m}(z,y)\\
&\hspace{2cm}+
\int_{\mathbb{H}^+\setminus B_{\varepsilon c_n}(y)}\pr(x+S(m)\in dz,\tau_x>m)p_{n-m}(z,y).
\end{align}
Using the large deviations bound \eqref{eq:local.ld.2}, one gets easily
$$
\sup_{z\in \mathbb{H}^+\setminus B_{\varepsilon c_n}(y)}
p_{n-m}(z,y)
\le C\frac{(n-m)\phi(\varepsilon c_n)}{(\varepsilon c_n)^d}
\le C \gamma\varepsilon^{-d-\alpha}c_n^{-d}.
$$
Consequently,
\begin{equation}
\label{eq:llt_normal.2}
c_n^d \int_{\mathbb{H}^+\setminus B_{\varepsilon c_n}(y)}\pr(x+S(m)\in dz,\tau_x>m)p_{n-m}(z,y)
\le C\pr(\tau_x>n)\gamma\varepsilon^{-d-\alpha}.
\end{equation}
For the integral over $B_{\varepsilon c_n}(y)$ we have 
\begin{align*}
&\int_{B_{\varepsilon c_n}(y)}
\pr(x+S(m)\in dz,\tau_x>m)p_{n-m}(z,y) \\
&\hspace{0.2cm}=\int_{B_{\varepsilon c_n}(y)}
\pr(x+S(m)\in dz,\tau_x>m)\pr(z+S(n-m)\in y+\Delta)\\
&\hspace{0.5cm}+\int_{B_{\varepsilon c_n}(y)}
\pr(x+S(m)\in dz,\tau_x>m)\pr(z+S(n-m)\in y+\Delta;\tau_x\le n-m).
\end{align*}
By the strong Markov property, 
\begin{align*}
&\pr(z+S(n-m)\in y+\Delta;\tau_x\le n-m)\\
&=\sum_{k=1}^{n-m-1}\int_{\mathbb{H}^-}\pr(z+S(n)\in du,\tau_z=k)
\pr(u+S(n-m-k)\in y+\Delta).
\end{align*}
Noting that $|y-u|>\varepsilon c_n$ for all $u\in\mathbb{H}^-$ and using once again \eqref{eq:local.ld.2}, we obtain
\begin{equation*}
 c_n^d\pr(z+S(n-m)\in y+\Delta;\tau_x\le n-m)
 \le C\gamma\varepsilon^{-d-\alpha}.
\end{equation*}
Consequently,
\begin{align}
\label{eq:llt_normal.3}
\nonumber
&c_n^d\int_{B_{\varepsilon c_n}(y)}\pr(x+S(m)\in dz,\tau_x>m)
\pr(z+S(n-m)\in y+\Delta;\tau_x\le n-m)\\
&\hspace{2cm}\le C\gamma\varepsilon^{-d-\alpha}\pr(\tau_x>m).
\end{align}
By the Stone local limit theorem,
\begin{align*}
&\int_{B_{\varepsilon c_n}(y)}
\pr(x+S(m)\in dz,\tau_x>m)\pr(z+S(n-m)\in y+\Delta)\\
&\hspace{1cm}
=\frac{1+o(1)}{c_{n-m}^d}\int_{B_{\varepsilon c_n}(y)}
\pr(x+S(m)\in dz,\tau_x>m)
g_{\alpha,\sigma}\left(\frac{y-z}{c_{n-m}}\right).
\end{align*}
Recalling that $g_{\alpha,\sigma}$ is bounded, we then get
\begin{align}
\label{eq:llt_normal.4}
&\frac{c_n^d}{\pr(\tau_x>m)}\int_{B_{\varepsilon c_n}(y)}
\pr(x+S(m)\in dz,\tau_x>m)\pr(z+S(n-m)\in y+\Delta)\\
\nonumber
&\hspace{0.5cm}
=\gamma^{-d/\alpha}\e\left[
g_{\alpha,\sigma}\left(\frac{y-x-S(m)}{c_{n-m}}\right)
{\rm 1}\left\{x+S(m)\in B_{\varepsilon c_n}(y)\right\}\Big|\tau_x>m\right]+o(1).
\end{align}
Applying now the integral limit theorem, we infer that 
\begin{align*}
&\e\left[
g_{\alpha,\sigma}\left(\frac{y-x-S(m)}{c_{n-m}}\right)
{\rm 1}\left\{x+S(m)\in B_{\varepsilon c_n}(y)\right\}\Big|\tau_x>m\right]\\
&=\e\left[
g_{\alpha,\sigma}\left(\frac{y-x}{\gamma^{1/\alpha}c_{n}}
-\frac{(1-\gamma)^{1/\alpha}}{\gamma^{1/\alpha}}
M_{\alpha,\sigma}\right)
{\rm 1}\left\{M_{\alpha,\sigma}\in 
B_{\varepsilon(1-\gamma)^{-1/\alpha}}\left(\frac{y-x}{c_n}\right)\right\}\right]+o(1)\\
&=\e\left[
g_{\alpha,\sigma}\left(\frac{y-x}{\gamma^{1/\alpha}c_{n}}
-\frac{(1-\gamma)^{1/\alpha}}{\gamma^{1/\alpha}}
M_{\alpha,\sigma}\right)\right]\\
&-\e\left[
g_{\alpha,\sigma}\left(\frac{y-x}{\gamma^{1/\alpha}c_{n}}
-\frac{(1-\gamma)^{1/\alpha}}{\gamma^{1/\alpha}}
M_{\alpha,\sigma}\right)
{\rm 1}\left\{M_{\alpha,\sigma}\notin 
B_{\varepsilon(1-\gamma)^{-1/\alpha}}\left(\frac{y-x}{c_n}\right)\right\}\right]+o(1).
\end{align*}
Since $g_{\alpha,\sigma}(y)\le C|y|^{-d-\alpha}$,
\begin{align}
\label{eq:llt_normal.5}
\nonumber
&\e\left[
g_{\alpha,\sigma}\left(\frac{y-x}{\gamma^{1/\alpha}c_{n}}
-\frac{(1-\gamma)^{1/\alpha}}{\gamma^{1/\alpha}}
M_{\alpha,\sigma}\right)
{\rm 1}\left\{M_{\alpha,\sigma}\notin 
B_{\varepsilon(1-\gamma)^{-1/\alpha}}\left(\frac{y-x}{c_n}\right)\right\}\right]\\ 
&\le C\gamma^{(d+\alpha)/\alpha}\varepsilon^{-d-\alpha}.
\end{align}
Finally, we notice that, uniformly in $w\in\mathbb{H}^+$,
\begin{align}
\label{eq:llt_normal.6}
\nonumber
&\e\left[
g_{\alpha,\sigma}\left(\frac{w}{\gamma^{1/\alpha}}
-\frac{(1-\gamma)^{1/\alpha}}{\gamma^{1/\alpha}}
M_{\alpha,\sigma}\right)\right]\\
\nonumber
&\hspace{1cm}=\int_{\R^d}g_{\alpha,\sigma}\left(\frac{w}{\gamma^{1/\alpha}}
-\frac{(1-\gamma)^{1/\alpha}}{\gamma^{1/\alpha}}u\right)
p_{M_{\alpha,\sigma}}(u)du\\
\nonumber
&\hspace{1cm}=\gamma^{d/\alpha}\int_{\R^d}g_{\alpha,\sigma}\left(v\right)
p_{M_{\alpha,\sigma}}\left(\frac{w-\gamma^{1/\alpha}v}{(1-\gamma)^{1/\alpha}}\right)dv\\
&\hspace{1cm}
=\gamma^{d/\alpha}(p_{M_{\alpha,\sigma}}(w)+o(1))
\quad\text{as }\gamma\to0.
\end{align}
Combining \eqref{eq:llt_normal.3}--\eqref{eq:llt_normal.6},
we conclude that
\begin{align*}
\lim_{\gamma\to0}\limsup_{n\to\infty}\Bigg|&\frac{c_n^d}{\pr(\tau_x>m)} \int_{B_{\varepsilon c_n}(y)}
\pr(x+S(m)\in dz,\tau_x>m)p_{n-m}(z,y)\\
&\hspace{2cm}-p_{M_{\alpha,\sigma}}\left(\frac{y-x}{c_n}\right)\Bigg|=0.
\end{align*}
From this relation and from \eqref{eq:llt_normal.2} we get 
\begin{align*}
\sup_{y\in \mathbb{H}^+:\,y_1>2\varepsilon c_n|y|\le 3Ac_n}\left|c_{n}^d\pr\left(x+S(n)\in y+\Delta \mid \tau_x>n\right)- {p_{M_{\alpha ,\sigma}}\left(\frac{y-x}{c_{n}}\right)}\right|\to0 
\end{align*}
uniformly in $x$ with $x_1\le\delta_n c_n$.
This completes the proof of the theorem. 
\end{proof}


\section{Probabilities of small deviations when random walk starts at the origin}\label{sec:small}

\begin{proposition}
    \label{SmallDev'} Suppose $X\in \mathcal{D}(d,\alpha ,\sigma)$ and the
    distribution of $X$ is non-lattice. Then
    \begin{equation}
    c_{n}^d\pr(S(n)\in y+\Delta \mid \tau>n)\sim 
    g_{\alpha,\sigma}\left(
        0, \frac{y_{(2,d)}}{c_n}
    \right)
    \frac{\int_{y_{1}}^{y_{1}+\Delta }H(u)du}{n\pr\left( \tau
    ^{-}>n\right) },\quad \text{as } n\rightarrow \infty  \label{SD'}
    \end{equation}%
    uniformly in $y_{1}\in (0,\delta _{n}c_{n}]$, where $\delta _{n}\rightarrow 0$
    as $n\rightarrow \infty $.
\end{proposition}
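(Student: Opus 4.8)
The plan is to adapt the argument used in~\cite{VW2009} for the one-dimensional small-deviation asymptotics (the one-dimensional walk here being $S_{1}$); that part is essentially available, and the new difficulty is to carry the remaining coordinates $S_{(2,d)}$ along, which is what produces the factor $g_{\alpha,\sigma}(0,y_{(2,d)}/c_{n})$. The first step is to turn the recursion~\eqref{eq:40}--\eqref{eq:41}, equivalently the Baxter--Spitzer identity of Lemma~\ref{lem:tanaka}, into a renewal representation: expanding $\big(1-\e[s^{\tau^{+}}e^{it\cdot S(\tau^{+})}]\big)^{-1}$ in a geometric series and inverting the Fourier transform gives, for $n\ge1$,
\[
b_{n}(y)=\pr(S(n)\in y+\Delta,\tau>n)=\sum_{k\ge1}\pr\big(T_{k}=n,\ S(T_{k})\in y+\Delta\big),
\]
where $T_{1}<T_{2}<\cdots$ are the strict ascending ladder epochs of $S_{1}$ and the increments $\big(T_{j}-T_{j-1},\,S(T_{j})-S(T_{j-1})\big)$ are i.i.d.\ copies of $\big(\tau^{+},\chi^{+},S_{(2,d)}(\tau^{+})\big)$. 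Thus $b_{n}(y)$ is the mass, at time $n$ and in the cell $y+\Delta$, of the renewal measure of the $(1+d)$-dimensional step law $\mathcal{L}\big(\tau^{+},\chi^{+},S_{(2,d)}(\tau^{+})\big)$ on $\Z_{\ge1}\times(0,\infty)\times\R^{d-1}$.

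The leading term of this renewal mass, when $y_{1}=o(c_{n})$ and $|y_{(2,d)}|=O(c_{n})$, is produced by configurations with a \emph{single long excursion}: among the summands of $T_{k}=n$, exactly one index $j_{0}$ carries an inter-ladder time of order $n$, the remaining $k-1$ excursions are short, and the number $k$ of ladder epochs stays $O(1+y_{1})=o(c_{n})$ --- this is forced because $\chi^{+}>0$ and $\sum_{j}\chi^{+}_{j}=S_{1}(T_{k})\in[y_{1},y_{1}+1)$. On these configurations the long excursion, through the single-big-jump heuristic for the heavy-tailed (index $-\rho$, see~\eqref{integ1}) sequence $\{\tau^{+}\}$ and the local renewal theorem, supplies the factor $\big(n\pr(\tau^{-}>n)\big)^{-1}$; its $(2,d)$-displacement, which is of order $c_{n}$ with direction governed by $\sigma$, supplies $g_{\alpha,\sigma}(0,y_{(2,d)}/c_{n})$ after a conditional local limit theorem; and its first coordinate $\chi^{+}_{j_{0}}$ together with the short excursions supplies, through the renewal structure of the ascending ladder-height walk, the factor $\int_{y_{1}}^{y_{1}+1}H(u)\,du$, see~\eqref{ren}. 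To make this rigorous I would split the renewal sum according to the largest inter-ladder time and the number $k$ of ladder epochs, use Lemma~\ref{lem:lem20} and Corollary~\ref{Cmin} --- exploiting the sharp bound $p_{m}(x,z)\le CV(x_{1})\frac{H(\min(c_{m},z_{1}))}{mc_{m}^{d}}$ with $H(\min(c_{m},z_{1}))=H(z_{1})$ near the boundary --- together with the concentration-function inequalities to discard the configurations with two or more long excursions or with an atypically large $S_{(2,d)}$, and then identify the main term: the one-dimensional estimate of~\cite{VW2009} handles the $S_{1}$-part and a Stone-type local limit theorem handles the $S_{(2,d)}$-part. The endpoints of the resulting Riemann-sum limit are controlled by boundedness and continuity of $g_{\alpha,\sigma}$ and by Lemma~\ref{LDensity}, and uniformity in $y_{1}\le\delta_{n}c_{n}$ is preserved because all the estimates used are uniform.

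The hard part will be this second step, and in particular the treatment of $S_{(2,d)}$: one has to show that, conditionally on the long excursion having time $\asymp n$ and an $S_{1}$-overshoot that is $o(c_{n})$, the scaled $(2,d)$-displacement has the limiting density proportional to $g_{\alpha,\sigma}(0,\cdot)$, uniformly in the small parameter $y_{1}$ and in $y_{(2,d)}$. As emphasised in the introduction, the first coordinate and the others are not asymptotically independent --- the walk may make a large jump (the one that makes the long excursion cross the level $0$) while $S_{1}$ stays close to the hyperplane $\{x_{1}=0\}$ --- and since no local-tail hypothesis such as~\eqref{eq:local.heavy} is assumed in this proposition, this coupling has to be controlled entirely by concentration-function inequalities and by the preliminary upper bounds, in the spirit of Lemmata~\ref{lem:lem20} and~\ref{lem:lem19}.
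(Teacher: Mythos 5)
Your renewal representation $b_n(y)=\sum_{k\ge1}\pr\bigl(T_k=n,\ S(T_k)\in y+\Delta\bigr)$ is correct (it follows from time reversal, equivalently from the Wiener--Hopf form of Lemma~\ref{lem:tanaka}), and the single-big-jump architecture is a legitimate alternative in principle. But the proposal has a genuine gap at exactly the point you flag as ``the hard part'': the contribution of the one long excursion requires the joint local asymptotics of
$\pr\bigl(\tau^+=m,\ \chi^+\in du,\ S_{(2,d)}(\tau^+)\in dv\bigr)$
in the regime $m\asymp n$, $u=o(c_m)$, $|v|=O(c_m)$, and you give no argument for it. By time reversal this quantity equals $\pr\bigl(S(m)\in du\times dv,\ S_1(m)\le\min_{1\le i<m}S_1(i),\ S_1(m)>0\bigr)$, i.e.\ a small-deviation local limit theorem for a walk constrained to one side of the origin --- a statement of exactly the same nature and difficulty as the proposition you are trying to prove, only for the dual walk. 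The tools you invoke (concentration-function inequalities, Lemmata~\ref{lem:lem20} and~\ref{lem:lem19}, Corollary~\ref{Cmin}) do suffice to discard configurations with two or more long excursions and to control the tail of $S_{(2,d)}$, but they are one-sided bounds and cannot produce the exact constant $g_{\alpha,\sigma}(0,\cdot)$ in the main term. So the plan reduces the proposition to an equally hard dual statement rather than proving it.

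The paper sidesteps this circularity by using the recursion \eqref{eq:40} obtained by differentiating, rather than expanding, the identity of Lemma~\ref{lem:tanaka}. In that decomposition the long factor is the \emph{free} walk, $\pr(S(k)\in y-z+\Delta,\ S_1(k)>0)$ with $k\sim n$ --- only a single-time constraint, so Stone's local limit theorem applies directly and yields $g_{\alpha,\sigma}\bigl(0,y_{(2,d)}/c_n\bigr)/c_n^d$ uniformly --- while the accompanying short factor $1+\sum_{j\le\varepsilon_n n}\int_{0<z_1<y_1}dB_j(z)$ converges to $H(y_1)$ by the elementary duality identity behind \eqref{ren}; the boundary strip $z_1\in[y_1,y_1+1)$ (the term $R^{(5)}$) then produces, after integration by parts, the factor $\int_{y_1}^{y_1+1}H(u)\,du$. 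If you wish to keep the ladder-epoch route, you would first have to establish the dual small-deviation LLT for $(\tau^+,\chi^+,S_{(2,d)}(\tau^+))$ independently, together with a local limit theorem for $\tau^+$ itself refining \eqref{integ1}; that is not easier than adopting the paper's decomposition from the outset.
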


\begin{proof} 
First, using once again \eqref{eq:int_A}, we get 
\begin{align*}
    {R}_{\varepsilon }^{(0)}(y)+{R}_{\varepsilon }^{(1)}(y)+{R}_{\varepsilon }^{(2)}(y)
     \le    
    \frac{CH(y_1)}{nc_n^d}
    \sum_{k=1}^{(1-\varepsilon) n}
     \pr(S_{1}(k)\in(0,y_1+2)).   
\end{align*}
When  $\alpha\in(1,2]$, using the Stone theorem, we proceed as follows 
\begin{align*}
    \sum_{k=1}^{(1-\varepsilon) n}
     \pr(S_{1}(k)\in (0,y_1+2))    
     &\le \sum_{k=1}^{(1-\varepsilon) n} \frac{C(y_1+3)}{c_k}\\      
     &\le C (y_1+3) \frac{n}{c_n}\le C\delta_n n.
\end{align*}    
Now we will consider the case $\alpha\le 1$.
Fix $\beta>0$ and notice that $1/\alpha-1+\beta>0$ for any $\alpha\le1$. 
Since  $c_n$ is regularly varying of index $1/\alpha$, 
by Potter's bounds,   there exists $C>0$, such that for $k\le n$, 
\[
\frac{c_{n}}{c_k} \le C \left(\frac{n}{k}\right)^{1/\alpha+\beta}. 
\]
Then, for the sequence $\gamma_n=\delta_n^{\frac{1}{2(1/\alpha-1+\beta)}}
\to 0$, 
\begin{align*}
    &\sum_{k=1}^{(1-\varepsilon) n}
     \pr(S_{1}(k)\in (0,y_1+2))   \\ 
     &\hspace{1cm}\le \gamma_n n + 
     \sum_{k=\gamma_n n}^{(1-\varepsilon) n}
     \pr(S_{1}(k)\in (0,y_1+2))\\
    &\hspace{1cm}\le 
    \gamma_n n +
    C\sum_{k=\gamma_n n}^{(1-\varepsilon) n} \frac{y_1+2}{c_k}
    \le 
    \gamma_n n + C \frac{y_1+2} {c_n} 
    \sum_{k=\gamma_n n}^{(1-\varepsilon) n} \left(\frac{n}{k}\right)^{1/\alpha+\beta}\\
    &\hspace{1cm}\le \gamma_n n + C \frac{y_1+2} {c_n} n \left(\frac{n}{\gamma_n n}\right)^{1/\alpha+\beta-1}
    \le \gamma_n n +Cn\delta_n\left(\frac{1}{\gamma_n }\right)^{1/\alpha+\beta-1}\\
    &\hspace{1cm}\le Cn(\gamma_n+\delta_n^{1/2}).
\end{align*}
Therefore,
\[
    {R}_{\varepsilon }^{(0)}(y)+{R}_{\varepsilon }^{(1)}(y)+{R}_{\varepsilon }^{(2)}(y)
    \le C
    (\gamma_n+\delta_n^{1/2}) \frac{H(y_1)}{c_n^d}, 
\]
and, as a result, for any fixed $\varepsilon>0$, 
\begin{equation}
    \lim_{n\rightarrow \infty }\sup_{y\in\R^d: 0<y_1\leq \delta _{n}c_{n}}\frac{c_{n}^d}{H(y_1)}%
    \left({R}_{\varepsilon }^{(0)}(y)+ R_{\varepsilon }^{(1)}(y)+R_{\varepsilon }^{(2)}(y)\right) =0.
    \label{Y1}
\end{equation}%
Since~\eqref{Y1} holds for any fixed $\varepsilon>0$ there exists a sequence $\varepsilon_n\downarrow 0$, such that~\eqref{Y1} is still true. Moreover, it will be true for any  $\varepsilon'_n\downarrow 0$ such that $\varepsilon'_n\ge \varepsilon_n$.
We will assume now that $\varepsilon = \varepsilon_n$.
It is clear and will be used in the subsequent proof that we can increase $\varepsilon_n$ and~\eqref{Y1}  will hold as long as $\varepsilon_n<1/2$. 

Now we represent 
\[
    R_\varepsilon^{(3)}(y)= 
R_\varepsilon^{(4)}(y)+R_\varepsilon^{(5)}(y)+R_\varepsilon^{(6)}(y), 
\]
where 
\begin{align*}
    R_\varepsilon^{(4)}(y)&=\pr({S}(n)\in y+\Delta)+\sum_{k=1}^{\varepsilon n}
    \int_{z_1\in (0,y_1), |z^{(2,d)}|\le  \varepsilon^{1/3} c_n}
    \pr(S(n-k)\in  y-z+\Delta)dB_{k}(z)\\
R_\varepsilon^{(5)}(y)&=\sum_{k=1}^{\varepsilon n}
    \int_{z_1\in (y_1,y_1+1), |z^{(2,d)}|\le  \varepsilon^{1/3} c_n}
    \pr(S(n-k)\in  y-z+\Delta)dB_{k}(z)\\
    R_\varepsilon^{(6)}(y)&=\sum_{k=1}^{\varepsilon n}
\int_{z\in \R^d: |z^{(2,d)}|> \varepsilon^{1/3} c_n}
\pr(S(n-k)\in  y-z+\Delta, S(n-k)_{1}>0)dB_{k}(z).
\end{align*}
Let $\varepsilon_n$ an arbitrary converging to zero sequence of positive numbers.
First, by the Stone local limit theorem, 
\begin{multline*}
\int_{0<z_1<y_1, |z|^{(2,d)}\le \varepsilon_n^{1/3} c_n}\pr(S(n-k)\in  y-z+\Delta)dB_{k}(z)\\
=\frac{g_{\alpha,\sigma }
\left(0,\frac{y_{(2,d)}}{c_{n-k}}\right)+\Delta _{1}(n-k,y)}{c_{n-k}^d}
\int_{0<z_1<y_1, |z^{(2,d)}|\le \varepsilon_n^{1/3} c_n}dB_{k}(z),
\end{multline*}%
where $\Delta _{1}(n-k,y)\rightarrow 0$ uniformly in $z$ such that
$z_1\in (0,\delta_{n}c_{n})$ and $k\in \lbrack 1, \varepsilon_n n]$. 
Therefore,
\begin{align}\nonumber
    R_{\varepsilon_n}^{(4)}(y)
    &= 
    \left(g_{\alpha,\sigma}
\left(0,\frac{y_{(2,d)}}{c_{n}}\right)+\Delta _{1}(n,y)
    \right)
    \left(
        \frac{1}{c_n^d}
        +
        \sum_{k=1}^{[\varepsilon_n n]}
        \frac{1}{c_{n-k}^d}
        \int_{0<z_1<y_1, |z^{(2,d)}|\le \varepsilon_n^{1/3} c_n}dB_{k}(z)
    \right)\\
    \label{eq:r4.1}
    &= 
    \frac{g_{\alpha,\sigma}
\left(0,\frac{y_{(2,d)}}{c_{n}}\right)+\Delta _{1}(n,y)
    }
    {c_n^d}
    \left(
        1
        +
        \sum_{k=1}^{[\varepsilon_n n]}
        \int_{0<z_1<y_1, |z^{(2,d)}|\le \varepsilon_n^{1/3} c_n}dB_{k}(z),
    \right) 
\end{align}
where $\Delta _{1}(n,y)\rightarrow 0$ uniformly in $y$ with
$y_1\in (0,\delta_{n}c_{n}) $ and $k\in \lbrack 1, \varepsilon_n n]$.
Now we represent
\begin{multline}\label{eq:r4.2}
1+\sum_{k=1}^{\varepsilon_n n}
        \int_{0<z_1<y_1, |z^{(2,d)}|\le \varepsilon_n^{1/3} c_n}dB_{k}(z)
  =H(x_1)-\sum_{k=\varepsilon_n n}^\infty \pr(S_{1}(k),\tau>k)\\ -
\sum_{k=1}^{\varepsilon_n n}
        \int_{0<z_1<y_1, |z^{(2,d)}|>\varepsilon_n^{1/3} c_n}dB_{k}(z).
\end{multline}
For any fixed $\varepsilon>0$, by Corollary~22 of~\cite{VW2009},
\begin{align*}
    \sum_{k=\varepsilon n}^\infty \pr(S_{1}(k)<x_1,\tau>k) 
    &\le Cy_1H(y_1)
    \sum_{k>\varepsilon n}\frac{1}{kc_k}\\ 
    &\le C \frac{y_1H(y_1)}{c_{\varepsilon  n}}
    =o(H(y_1))
\end{align*}
uniformly in in $y$ with $y_1\le \delta_nc_n$. 
Hence, this bound  holds for some sequence $\varepsilon_n\downarrow 0$.
Increasing the original sequence $\varepsilon_n$ if needed 
we obtain 
\begin{equation}\label{eq:r4.3}
    \sum_{k=\varepsilon_n n}^\infty \pr(S_{1}(k)<y_1,\tau>k) 
    =o(H(y_1))
\end{equation}
uniformly in in $y$ with $y_1\le \delta_nc_n$. 

Fix a large positive number $A$ and let 
\[
    c^\leftarrow (y):=\inf\{\, k\ge 1: c_k>y \,\}.
\]
Note that $c_{c^\leftarrow (y)}>y$ and $c_{c^\leftarrow (y)-1}\le y$. 
Also note that since $\mu(y)$ is regularly varying, 
$c^\leftarrow (y)\sim 1/\mu(y)$ as  $y\to\infty$. 
Then,
\[
\sum_{k=1}^{\varepsilon_n n}
        \int_{0<z_1<y_1, |z^{(2,d)}|>\varepsilon_n^{1/3} c_n}dB_{k}(z)
\le R_{\varepsilon_n}^{(4,1)}(y)+
    R_{\varepsilon_n}^{(4,2)}(y),
    \]
where
\[
    R_{\varepsilon}^{(4,1)}(y)
    =\sum_{k=1}^{c^\leftarrow(Ay_1)-1}
    \pr(S_{1}(k)<y_1, |S_{2,d}(k)| >\varepsilon_n^{1/3} c_n, 
    \tau>k)
\]
and 
\[
    R_{\varepsilon}^{(4,2)}(y)
    =\sum_{k=c^\leftarrow(Ay_1)}^{\varepsilon_n n}
     \pr(S_{1}(k)<y_1, |S_{2,d}(k)| >\varepsilon_n^{1/3} c_n, 
    \tau>k).
\]
We can estimate  $R_{\varepsilon}^{(4,2)}(x)$ similarly to the above 
\begin{align*}
    R_{\varepsilon}^{(4,2)}(y)
    &\le 
    \sum_{k=c^\leftarrow(Ay_1)}^\infty
    \pr(S_1(k)<y_1,\tau>k) \\ 
    &\le C(y_1+1)H(y_1)
    \sum_{k=c^\leftarrow(Ay_1)}^\infty\frac{1}{kc_k}
    \le C \frac{(y_1+1)H(y_1)}{A(y_1+1)}=\frac{C}{A}H(y_1).
\end{align*}
Clearly, taking $A$ sufficiently large we can make this term much smaller than
$H(y_1)$.
By Theorem~\ref{Cru}, 
\[
    \pr(S_1(k)<y_1, |S_{2,d}(k)| >\varepsilon_n^{1/3} c_n\mid 
    \tau>k)
    \le    
    \pr(|S_{2,d}(k)| >\varepsilon_n^{1/3} c_n\mid 
    \tau>k)\to 0,
\] 
uniformly in $k\le \varepsilon_n n$. 
Combining this with \eqref{AsH}, we conclude that
\begin{align*}
    R_{\varepsilon_n}^{(4,1)}(y)
    &=o\left(
    \sum_{k=1}^{c^\leftarrow(Ay_1)-1} \pr(\tau>k)\right)\\
    &= o \left(\sum_{k=1}^{c^\leftarrow(Ay_1)-1} \frac{H(c_k)}{k}\right)=o\left(H(y_1)\right)
\end{align*}
uniformly in $y$ with $y_1\le \delta_n c_n$. 
Hence, 
\begin{equation}
    \label{eq:r4.4}
        \int_{0<z_1<y_1, |z^{(2,d)}|>\varepsilon_n^{1/3} c_n}dB_{k}(z)
        =o(H(y_1))
\end{equation}
uniformly in $y$ with $y_1\le \delta_nc_n$.
Combining~\eqref{eq:r4.1},\eqref{eq:r4.2},\eqref{eq:r4.3} and 
\eqref{eq:r4.4} we obtain that 
\begin{equation}\label{eq:r4}
    R_{\varepsilon_n}^{(4)}(y) =
    \frac{g_{\alpha ,\sigma }
\left(0,\frac{y_{(2,d)}}{c_{n}}\right)+\Delta _{2}(n,y)
    }{c_n^d} H(y_1), 
\end{equation}   
where $\Delta _{2}(n,y)\rightarrow 0$ uniformly in $y$ such that $y_1\in (0,\delta_{n}c_{n})$. 
Using~\eqref{eq:r4.4} and the Stone local limit theorem, one can easily conclude that 
 \begin{equation}\label{eq:r6}
    R_{\varepsilon_n }^{(6)}(y)
    \le \frac{C}{c_n^d}
    \sum_{k=1}^{\varepsilon_n n}
        \int_{0<z_1<y_1, |z^{(2,d)}|>\varepsilon_n^{1/3} c_n}dB_{k}(z)
        =o\left(\frac{H(y_1)}{c_n^d}\right).
 \end{equation}
 uniformly in $y$ such that $y_1\in (0,\delta _{n}c_{n})$. 

 Analysis of $R^{(5)}_\varepsilon(y)$ is very similar to that of $R^{(4)}_\varepsilon(y)$.
 First we make use of the Stone theorem,
 \begin{equation*}
    R_{\varepsilon }^{(5)}(y)
    = 
    \frac{g_{\alpha }
\left(0,\frac{y_{(2,d)}}{c_{n}}\right)+\Delta _{3}(n,y)}
    {c_n^d}
        \sum_{k=1}^{\varepsilon_n n}
        \int_{z_1\in (y_1,y_1+1) , |z^{(2,d)}|\le \varepsilon_n^{1/3} c_n}
        (z_1-y_1)
        dB_{k}(z),  
\end{equation*}
where $\Delta _{3}(n,y)\rightarrow 0$ uniformly in $y$ with 
$y_1\in (0,\delta_{n}c_{n})$. 
Then, using the same arguments as above, we obtain 
\[
    R_{\varepsilon }^{(5)}(y)
    = 
    \frac{g_{\alpha,\sigma }
\left(0,\frac{y^{(2,d)}}{c_{n}}\right)+\Delta _{4}(n,y)}
    {c_n^d}
        \sum_{k=1}^{\varepsilon_n n}
        \int_{z_1\in (y_1,y_1+1)}
        (1+z_1-y_1)
        dB_{k}(z),    
\]
where $\Delta _{4}(n,y)\rightarrow 0$ uniformly in 
$y$ with $y_1\in (0,\delta_{n}c_{n})$. 
Integrating by parts we can complete the proof now. 

\end{proof}


\section{Probabilities of small  deviations when random walks start at 
an arbitrary starting point}

\begin{proof}[Proof of Theorem~\ref{thm:small.deviations.arbitrary.lattice} in the lattice case]
    For the lattice distribution we have 
    from~\eqref{eq:g.arbitrary.starting.point} 
    \[
      p_n(x,y) = \pr(x+S(n)=y,\tau_x>n)  
      =\sum_{k=0}^n 
      \sum_{z_1=1}^{x_1\wedge y_1} 
      \sum_{z_2,\ldots,z_d}
      p_k^+(0,z-x)b_{n-k}(y-z). 
    \]
Let $N_n$ be the sequence of integers, which was constructed in the proof of Theorem~\ref{Cru}. 
We shall use the representation 
\[
    p_n(x,y) = P_1(x,y)+P_2(x,y)+P_3(x,y),
\]   
where 
\begin{align*}
    P_1(x,y)&:= \sum_{k=0}^{N_n} 
    \sum_{z_1=1}^{x_1\wedge y_1} 
    \sum_{z_2,\ldots,z_d}
    p_k^+(0,z-x)b_{n-k}(y-z),\\
    P_2(x,y)&:= \sum_{k=N_n+1}^{n-N_n-1} 
    \sum_{z_1=1}^{x_1\wedge y_1} 
    \sum_{z_2,\ldots,z_d}
    p_k^+(0,z-x)b_{n-k}(y-z),\\
    P_3(x,y)&:= \sum_{k=n-N_n}^n  
    \sum_{z_1=1}^{x_1\wedge y_1} 
    \sum_{z_2,\ldots,z_d}
    p_k^+(0,z-x)b_{n-k}(y-z).
\end{align*}    

To estimate $P_2(x,y)$ we shall proceed as in the analysis of normal deviations.
Note that  by Lemma~\ref{lem:lem20}, 
\[
b_{n-k}(y-z)
\le C \frac{H(y_1-z_1)}{(n-k)c_{n-k}^d}.
\]
Then
\begin{align*}
P_2(x,y)
&\le C\frac{H(y_1)}{N_nc_{N_n}^d}
\sum_{k=N_n+1}^\infty\sum_{z_1=1}^{x_1\wedge y_1} 
    \sum_{z_2,\ldots,z_d}p_k^+(0,z-x)\\
    &\le C\frac{H(y_1)}{N_nc_{N_n}^d}
\sum_{k=N_n+1}^\infty\pr(S_1(k)\ge -x_1;\tau^+>k).
\end{align*}   
Using now \eqref{eq:thm1.4} and increasing, if needed $N_n$, we conclude that 
\begin{equation}\label{eq:p2}
    P_2(x,y)
    = o\left(\frac{H(y_1) V(x_1)}{N_nc_{N_n}^d}\right)
    = o\left(\frac{H(y_1) V(x_1)}{nc_n^d}\right). 
\end{equation}     

Now we will consider the first term $P_1(x,y)$. 
Let $\varepsilon_n\downarrow 0 $ be the sequence, which we have defined in the proof of Theorem~\ref{Cru}.
We will need the following sets 
\begin{align*} 
    A_1(x,y)&=\left\{z: |x-z|\le \varepsilon_n c_n, z_1 \in  (0,x_1\wedge y_1 ]  \right\}
    \\
    C_1(x,y)&=\left\{z: |x-z|> \varepsilon_n c_n,  z_1 \in  (0,x_1\wedge y_1]  \right\}
    \end{align*}
 
 Applying now the asymptotics for small deviations of walks starting at zero, we get 
 \begin{align*} 
 &\sum_{k=0}^{N_n}
 \sum_{z\in A_1(x,y)}
 p_k^+(0,z-x)b_{n-k}(y-z)\\
 &\hspace{1cm}\sim  
\frac{g_{\alpha,\sigma} 
\left(0,\frac{y_{2,d}-x_{2,d}}{c_n}\right)}{nc_n^d}
\sum_{k=0}^{N_n}
 \sum_{z\in A_1(x,y)}
 p_k^+(0,z-x)H(y_1-z_1). 
 \end{align*} 
Next we note that 
\begin{align*}
&\sum_{k=0}^{N_n} \sum_{z\in C_1(x,y)}
    p_k^+(0,z-x)b_{n-k}(y-z)\\
    &\le C\frac{H(y_1)}{nc_n^d} 
    \sum_{k=0}^{N_n} 
    \pr(|S(k)|\ge \varepsilon_n c_n,S_1(k)>-x_1,\tau^+>k)
\end{align*}
and
\begin{align*}
&\sum_{k=0}^{N_n} \sum_{z\in C_1(x,y)}
    p_k^+(0,z-x)H(y_1-z_1)\\
    &\le H(y_1) 
    \sum_{k=0}^{N_n} 
    \pr(|S(k)|\ge \varepsilon_n c_n,S_1(k)>-x_1,\tau^+>k). 
\end{align*}    
Taking into account \eqref{eq:thm1.9}, we obtain 
\begin{equation}\label{eq:p1.1} 
P_1(x,y) \sim 
\frac{g_{\alpha,\sigma} 
\left(0,\frac{y_{2,d}-x_{2,d}}{c_n}\right)}{nc_n^d}
 \sum_{z_1=0}^{x_1\wedge y_1}
 (V(x_1-z_1)-V(x_1-z_1-1))H(y_1-z_1), 
\end{equation}      
where we also replaced $\sum_{0}^{N_n}$ 
by  $\sum_{0}^{\infty}$ using the 
arguments in the proof of Proposition~11 in~\cite{Doney2012}. 
Analogous arguments 
give us 
\[ 
P_3(x,y) \sim 
\frac{g_{\alpha,\sigma} 
\left(0,\frac{y_{2,d}-x_{2,d}}{c_n}\right)}{nc_n^d}
 \sum_{z_1=0}^{x_1\wedge y_1}
 V(x_1-z_1)(H(y_1-z_1)-H(y_1-z_1-1).
\] 
Then, the arguments at the end of the proof 
of Proposition~11 in~\cite{Doney2012} 
give 
\[
    P_1(x,y) +P_3(x,y) \sim  
    V(x_1)H(y_1)\frac{g\left(0,\frac{y_{2,d}-x_{2,d}}{c_n}\right)}{nc_n^d}
    .
\]

\end{proof}

\begin{proof}[Proof of Theorem~\ref{thm:small.deviations.arbitrary.lattice} in the non-lattice case]
    The proof is very similar to the proof in the  
    lattice case. 
    
    For the non-lattice distribution we will make use of~\eqref{eq:g.arbitrary.starting.point.interval} 
We split the sum as follows, 
\[
    p_n(x,y) = P_1(x,y)+P_2(x,y)+P_3(x,y),
\]   
where 
\begin{align*}
    P_1(x,y)&:= 
    \sum_{k=0}^{N_n} 
    \int_{(0,x_1\wedge (y_1+1) ] \times \R^{d-1}}
    p_k^+(0,dz-x)b_{n-k}(y-z),\\ 
    P_2(x,y)&:= \sum_{k=N_n+1}^{n-N_n-1} 
    \int_{(0,x_1\wedge (y_1+1) ] \times \R^{d-1}}
    p_k^+(0,dz-x)b_{n-k}(y-z),\\
    P_3(x,y)&:= \sum_{k=n-N_n}^n  
    \int_{(0,x_1\wedge (y_1+1) ] \times \R^{d-1}}
    p_k^+(0,dz-x)b_{n-k}(y-z).
\end{align*}    
There is virtually no difference in estimates 
for $P_2(x,y)$. So repeating the same arguments 
we obtain   
\begin{equation}\label{eq:p2.non-lattice}
    P_2(x,y) = o\left(\frac{H(y_1) V(x_1)}{nc_n^d}\right). 
\end{equation}     

Now we will consider the first term $P_1(x,y)$. 
We will need the following sets 
\begin{align*} 
    A_1(x,y)&=\left\{z: |x-z|\le \varepsilon_n c_n, z_1 \in  (0,x_1\wedge y_1 ]  \right\}
    \\
    C_1(x,y)&=\left\{z: |x-z|> \varepsilon_n c_n,  z_1 \in  (0,x_1\wedge y_1]  \right\}
    \end{align*}
 
 Now we have,
 \begin{align*} 
 &\sum_{k=0}^{N_n}
 \int_{A_1(x,y)}
 p_k^+(0,dz-x)b_{n-k}(y-z)\\
 &\hspace{1cm}\sim  
\frac{g_{\alpha,\sigma} 
\left(0,\frac{y_{2,d}-x_{2,d}}{nc_n^d}\right)}{nc_n^d}
\sum_{k=0}^{N_n}
 \int_{A_1(x,y)}
 p_k^+(0,dz-x)
 \int_{y_1-z_1}^{y_z-z_1+1}
 H(u)du.  
 \end{align*} 
Now note that by~\eqref{eq:thm1.9}
\begin{align*}
    \sum_{k=0}^{\varepsilon_n n} \int_{C_1(x,y)}
    p_k^+(0,dz-x)H(y_1-z_1)
    &\le H(y_1) 
    \sum_{k=0}^{\varepsilon_n n} 
    \pr(|S(k)|\ge \delta c_n,S_1(k)>-x_1,\tau^+>k)\\ 
    &=o(H(y_1)V(x_1)), 
\end{align*}    
provided that $\gamma_n$ and $\varepsilon_n$ 
converges to $0$ sufficiently slowly. 
As a result, 
\begin{equation}\label{eq:p1.1.non-lattice} 
P_1(x,y) \sim 
\frac{g_{\alpha,\sigma} 
\left(0,\frac{y_{2,d}-x_{2,d}}{c_n}\right)}{nc_n^d}
\int_{(0,x_1\wedge (y_1+1])}
 V(x_1-dz_1)\int_{y_1-z_1}^{y_z-z_1+1}
 H(u)du , 
\end{equation}      
where we also replaced $\sum_{0}^{\varepsilon_n n}$ 
by  $\sum_{0}^{\infty}$ using similar arguments. 
Analogous arguments 
give us 
\begin{equation}\label{eq:p3.1.non-lattice} 
    P_3(x,y) \sim 
    \frac{g_{\alpha,\sigma} 
    \left(0,\frac{y_{2,d}-x_{2,d}}{c_n}\right)}{nc_n^d}
    \int_{(0,x_1\wedge (y_1+1])}
     V(x_1-z_1)H(y_1-z_1+\Delta)dz_1. 
\end{equation}      
 Now note that integration by parts 
 of the first integral 
 gives 
 \begin{multline*} 
    \int_{(0,x_1\wedge (y_1+1])}
    V(x_1-dz_1)\int_{y_1-z_1}^{y_z-z_1+1}
    H(u)du
    +   
    \int_{(0,x_1\wedge (y_1+1])}
     V(x_1-z_1)H(y_1-z_1+\Delta)dz_1\\ 
     =V(x_1)\int_{y_1}^{y_1+1}H(u) du. 
 \end{multline*}   
As a result, 
\[
    P_1(x,y) +P_3(x,y) \sim  
    V(x_1)\int_{y_1}^{y_1+1}H(u) du
    \frac{g\left(0,\frac{y_{2,d}-x_{2,d}}{c_n}\right)}{nc_n^d}.
\]
\end{proof} 


\section{Probabilities of large deviations when random walk starts at the origin}\label{sec:large}

We will need the following large deviations estimates. 
\begin{proposition}\label{prop:local.ld}
    Let  $X\sim \mathcal D(d,\alpha,\sigma)$ with some $\alpha<2$.  
    Suppose that there exists a regularly varying $\varphi$ 
    such thatthe upper bound in~\eqref{eq:local.heavy.1} holds. 
    Then, there exists constant $C_H$ such that for $|x|\ge c_n$ we have
    \begin{equation}\label{eq:local.ld}
        \pr(S(n)\in x+\Delta) 
        \le C_H
        \frac{1}{c_n^d}
        n\phi(|x|).
    \end{equation}    
    
    If, in addition,~\eqref{eq:local.heavy}  holds, then 
    \begin{equation}\label{eq:local.ld.2}
        \pr(S(n)\in x+\Delta) 
        \le C_H n\frac{\phi(|x|)}{|x|^{d}}
    \end{equation}    
\end{proposition}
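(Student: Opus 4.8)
The plan is to prove the two displays by a standard big-jump / one-big-jump decomposition for heavy-tailed walks, carried out at the level of local probabilities. Fix $x$ with $|x|\ge c_n$ and abbreviate $r=|x|$. I would split each increment according to whether $|X(j)|\le r/2$ or $|X(j)|>r/2$, and correspondingly decompose the event $\{S(n)\in x+\Delta\}$ by the number $N$ of ``big'' increments (those exceeding $r/2$ in modulus). The term $N=0$ is the truncated sum: here I would use a concentration-function estimate for the truncated walk $\sum_{j\le n}X(j)\mathbf{1}\{|X(j)|\le r/2\}$ together with a deviation bound. Since all increments are then bounded by $r/2$ and the whole sum must land near $x$ with $|x|=r$, at least one truncated partial sum of size $\asymp r$ is forced; a Fuk--Nagaev-type inequality, applicable because $\mu(u)$ is regularly varying of index $-\alpha<0$ (so truncated second moments are controlled), shows this probability is superpolynomially small in $r/c_n$, hence negligible against $n\phi(r)/c_n^d$. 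For $N\ge 2$ one gains a factor $(\,n\,\pr(|X|>r/2)\,)^N\le (C n\phi(r))^N$, which for $N\ge2$ is $o(n\phi(r))$ in the relevant range (using $n\phi(r)\to0$ when $r\ge c_n$, since $c_n^\alpha\phi(c_n)\asymp 1$ up to slowly varying factors and $\phi$ is regularly varying of index $-\alpha$).

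The main term is $N=1$: exactly one increment, say $X(j)$, is big, and $S(n)-X(j)$ is a sum of $n-1$ truncated increments which, to put $S(n)$ in $x+\Delta$, must lie in $x-X(j)+\Delta$. Conditioning on the position $w$ of $S(n)-X(j)$ via a concentration bound $\pr(\text{truncated walk}\in w+\Delta)\le C/c_n^d$ (valid by the concentration-function inequality for asymptotically stable walks, as used repeatedly in Section~2), and then integrating $\pr(X\in x-w+\Delta)$ over $w$, I would bound the $N=1$ contribution by
\[
 C\,\frac{n}{c_n^d}\int_{\R^d}\pr\bigl(X\in x-w+\Delta\bigr)\,\nu(dw),
\]
where $\nu$ is a probability measure (the law of the truncated partial sum), so the integral is at most $\sup$-type. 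For \eqref{eq:local.ld} one simply uses $\pr(X\in x-w+\Delta)\le \pr(|X|>|x-w|-\sqrt d\,)$ and, since $w$ is typically of order $c_n\le r$, the argument stays of order $r$, giving $\le C\phi(r)$ after using \eqref{eq:local.heavy.1}; this yields $\pr(S(n)\in x+\Delta)\le C_H n\phi(r)/c_n^d$. For the sharper bound \eqref{eq:local.ld.2} I would instead invoke the local assumption \eqref{eq:local.heavy}, i.e. $\pr(X\in x-w+\Delta)\le g(|x-w|)=\phi(|x-w|)/|x-w|^d$, and split the $w$-integral into $|w|\le r/2$ (where $|x-w|\asymp r$, contributing $\asymp n\phi(r)/r^d$ after noting $c_n^{-d}\le r^{-d}\cdot(r/c_n)^d$ is \emph{not} directly what we want — so here one keeps the $c_n^{-d}$ and uses that on $|w|\le r/2$ the measure $\nu$ has total mass $\le1$, but then rescales: actually on this range $g(|x-w|)\le C g(r)$ and $\int_{|w|\le r/2}\nu(dw)\le 1$, so this piece is $\le C n g(r)/c_n^{\,0}$... ) — more carefully, one writes $\frac{1}{c_n^d}\int \pr(X\in x-w+\Delta)\nu(dw)$ and on $|w|\le r/2$ bounds $\pr(X\in x-w+\Delta)\le Cg(r)$, while on $|w|>r/2$ one uses the concentration bound on $X$ instead, $\pr(X\in x-w+\Delta)\le C/c_1^d\le C$, against the small mass $\nu(|w|>r/2)$, which by Fuk--Nagaev is negligible. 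Combining gives $\pr(S(n)\in x+\Delta)\le C_H n g(r)=C_H n\phi(r)/r^d$ in the regime $r\ge c_n$ where $1/c_n^d\le C/r^d\cdot(r/c_n)^d$ is absorbed because, crucially, on $|w|\le r/2$ one actually has the stronger estimate: the truncated walk itself satisfies a \emph{local} large-deviation bound $\nu(w+\Delta)\le Cn\phi(|w|)/|w|^d$ for $|w|\ge c_n$ by induction/iteration, so the $c_n^{-d}$ only appears for $|w|\le c_n$, a region of $\nu$-mass $O(1)$ but over which $|x-w|\asymp r$ and the volume is $c_n^d$, giving exactly $n\phi(r)/r^d$ after multiplying $c_n^{-d}\cdot c_n^d$.

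The hardest step is making the $N=1$ integral produce $\phi(|x|)/|x|^d$ rather than merely $\phi(|x|)/c_n^d$: this requires handling the contribution of intermediate-size deviations of the truncated remainder (the region $c_n\lesssim|w|\lesssim r$), where one must bootstrap a local large-deviation bound for the truncated walk itself. I would set this up as an induction on $n$ (or a direct iteration of the decomposition), exactly in the spirit of the classical proofs of local large deviations for heavy-tailed walks; the regular variation of $\phi$ of index $-\alpha<2$ guarantees that the resulting geometric-type series in the number of big jumps and the dyadic decomposition of $|w|$ both converge. The remaining estimates — Fuk--Nagaev for the truncated sum, the concentration-function bound $C/c_n^d$, and the elementary inequality $n\phi(c_n)\le C$ — are all either quoted from the literature (\cite{FE}, and the concentration inequalities already used in Section~2) or immediate from regular variation, so I would not belabour them.
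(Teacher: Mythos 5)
First, a point of comparison: the paper does not actually prove this proposition --- it quotes \cite[Theorem~2.6]{Berger2019} for the lattice case and asserts that the non-lattice case is analogous. Your one-big-jump decomposition, with Fuk--Nagaev for the fully truncated part and a concentration-function bound supplying the local factor, is precisely the architecture of that proof (and of the Fuk--Nagaev lemma the authors sketch for a related bound), so the route is the right one. Two steps, however, do not work as you have written them.

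(a) \emph{Truncation level.} With increments truncated at $y=r/2$, $r=|x|$, the Fuk--Nagaev exponent $x/(4dy)$ in the very inequality you invoke equals $1/(2d)$, so the no-big-jump term is of order $(n\mu(r))^{1/(2d)}\asymp (r/c_n)^{-\alpha/(2d)}$ times the concentration factor $c_n^{-d}$. Since $n\phi(r)\asymp (r/c_n)^{-\alpha}$ up to slowly varying corrections and $\alpha/(2d)<\alpha$, this is \emph{not} dominated by $n\phi(r)/c_n^{d}$, let alone by $n\phi(r)/r^{d}$; it is certainly not ``superpolynomially small in $r/c_n$''. The standard repair is to truncate at $\varepsilon r$ with $\varepsilon$ so small that the exponent $1/(4d\varepsilon)$ exceeds $1$ for \eqref{eq:local.ld} and exceeds $(\alpha+d)/\alpha$ for \eqref{eq:local.ld.2} (one also needs the trivial bound $C/c_n^d$ to cover the range $r\asymp c_n$, where the power gain is vacuous).

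(b) \emph{The one-big-jump term for \eqref{eq:local.ld.2}.} Your discussion of intermediate deviations of the remainder, the proposed bootstrap of a local large-deviation bound for the truncated walk, and the induction on $n$ are never actually set up, and the paragraph does not resolve into an estimate. They are also unnecessary. On the event that exactly one increment is big, that increment $X_j$ satisfies $|X_j|>\varepsilon r$, so for \emph{every} location $w$ of the sum of the remaining increments one has $\pr(X_j\in x-w+\Delta,\ |X_j|>\varepsilon r)\le \sup_{|v|\ge \varepsilon r-\sqrt d}\,\pr(X\in v+\Delta)\le C_\varepsilon\,\phi(r)/r^{d}$, by \eqref{eq:local.heavy} and regular variation of $g$, uniformly in $w$. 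Integrating against the law of the remainder (total mass at most one) and summing over $j$ gives $C_\varepsilon n\phi(r)/r^{d}$ at once. Likewise, for two or more big jumps the gain is only $O(n\phi(r))$, not $o(n\phi(r))$, at $r\asymp c_n$ (since $n\phi(c_n)\asymp1$), and to produce the factor $r^{-d}$ in \eqref{eq:local.ld.2} you must retain one local factor $\sup_{|v|\ge r/2-\sqrt d}\pr(X\in v+\Delta)\le Cg(r)$ from one of the big increments rather than bounding both by their tails. With these corrections the argument closes and reproduces the cited proof.
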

This result is proved in~\cite[Theorem~2.6]{Berger2019} in the lattice case. 
We omit the proof of non-lattice case, as it  can be done  very similarly to~\cite[Theorem~2.6]{Berger2019}. 

Using the definition~\eqref{Defa} of $c_n$ we obtain from Corollary~\ref{Cmin} 
the following upper bound. (Recall that 
$g(r)=\frac{\phi(|r|)}{r^d}$.) 
\begin{lemma}\label{lem:heavy.prelim}
    For any $A>1$ there exists $c_A$ such that 
    \begin{equation}\label{eq:heavy.prelim}
        b_n(x) \le c_A H(x_1) g(|x|),
    \end{equation}
    for $x$ with $c_n\le |x|\le Ac_n$. 
\end{lemma}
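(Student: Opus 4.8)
The plan is to read Lemma~\ref{lem:heavy.prelim} off Corollary~\ref{Cmin} together with the definition~\eqref{Defa} of the norming sequence; no new decomposition is needed, only a comparison of two elementary bounds. First I would specialise Corollary~\ref{Cmin} to the starting point $0$: since $\chi^{-}>0$ a.s.\ we have $V(0)=1$, and since the renewal function $H$ is non-decreasing the minimum may be dropped, so that
\[
b_n(x)=p_n(0,x)\le C\,\frac{H(\min(c_n,x_1))}{nc_n^{d}}\le C\,\frac{H(x_1)}{nc_n^{d}}.
\]
It then remains to show that $\dfrac{1}{nc_n^{d}}\le c_A'\,\dfrac{\phi(|x|)}{|x|^{d}}=c_A'\,g(|x|)$ for $c_n\le|x|\le Ac_n$, with $c_A'$ depending on $A$ only. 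Using $|x|\le Ac_n$ to bound $c_n^{-d}\le A^{d}|x|^{-d}$, this reduces further to the inequality $n\phi(|x|)\ge c(A)>0$, uniformly over $c_n\le|x|\le Ac_n$.

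To obtain the latter I would invoke the structure of $c_n$. By~\eqref{Defa} and the regular variation of $\mu$ with index $-\alpha$ one has $\mu(c_n)\sim n^{-1}$, while for $\alpha<2$ Karamata's theorem on truncated second moments gives $\mu(u)=u^{-2}\int_{-u}^{u}x^{2}\pr(|X|\in dx)\sim\frac{\alpha}{2-\alpha}\pr(|X|>u)$ as $u\to\infty$; together with~\eqref{eq:local.heavy.1} this yields
\[
\phi(c_n)\asymp\pr(|X|>c_n)\asymp\mu(c_n)\asymp\frac1n ,
\]
so in particular $n\phi(c_n)$ stays bounded away from $0$ and $\infty$. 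Finally, the uniform convergence theorem for regularly varying functions (equivalently, Potter's bounds) gives $\phi(|x|)/\phi(c_n)\to(|x|/c_n)^{-\alpha}$ uniformly over $|x|/c_n\in[1,A]$, with the limit lying in $[A^{-\alpha},1]$; hence $n\phi(|x|)\ge c(A)>0$ for all large $n$, and enlarging the constant covers the finitely many remaining $n$. Chaining the inequalities,
\[
b_n(x)\le C\,\frac{H(x_1)}{nc_n^{d}}\le CA^{d}\,\frac{H(x_1)}{n|x|^{d}}\le \frac{CA^{d}}{c(A)}\,\frac{H(x_1)\phi(|x|)}{|x|^{d}}=c_A\,H(x_1)g(|x|),
\]
which is~\eqref{eq:heavy.prelim}.

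I do not expect a genuine obstacle here: the whole argument rests on the single equivalence $\phi(c_n)\asymp n^{-1}$, which is classical — precisely Karamata's theorem combined with the definition of $c_n$ — after which everything is bookkeeping with regular variation. The only mild care needed is the uniformity of the Potter estimate over the compact ratio range $[1,A]$ and the handling of small $n$, both entirely routine.
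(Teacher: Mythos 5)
Your argument is correct and follows exactly the route the paper indicates: the lemma is stated there as an immediate consequence of Corollary~\ref{Cmin} and the definition~\eqref{Defa} of $c_n$, which is precisely the chain $b_n(x)\le CH(x_1)/(nc_n^d)$ together with $n\phi(|x|)\gtrsim_A 1$ and $c_n^{-d}\le A^d|x|^{-d}$ that you carry out via Karamata and the uniform convergence theorem. The only cosmetic point is that $\chi^-=-S_1(\tau)$ is a weak descending ladder height and may vanish with positive probability, so $V(0)$ need not equal $1$; but it is a finite constant in the oscillating case, which is all your argument actually uses.
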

The main goal of this section is to obtain an upper bound for $b_n(x)$ in the case $|x|>Ac_n$. 
We now obtain a bound, which will be valid also for $|x|>Ac_n$.
 \begin{lemma}\label{lem:heavy.main.stable}
Suppose that $X$ is asymptotically stable with \(\alpha\in(0,2)\).     
If \eqref{eq:local.heavy} holds then there exists $\gamma>0$ 
    such that for all $y$ with $|y|>c_n$ we have 
    \begin{equation}\label{eq:heavy.main}
        b_n(y) \le \gamma H(y_1+1) g(|y|).
    \end{equation}    
\end{lemma}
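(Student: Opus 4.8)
The plan is a one‑big‑jump argument. Recall $b_n(y)=p_n(0,y)=\pr(S(n)\in y+\Delta,\tau>n)$. By Lemma~\ref{lem:heavy.prelim} the estimate already holds for $c_n\le|y|\le Ac_n$ with any fixed $A>1$, so I would treat $|y|>Ac_n$, with $A$ large and $\delta\in(0,1/8)$ small, both chosen at the end. Let $\sigma=\min\{j\le n:|X(j)|\ge\delta|y|\}$ (with $\min\emptyset=\infty$). On $\{\sigma=\infty\}$ all increments are below $\delta|y|$, and since $\{S(n)\in y+\Delta,\tau>n\}\subset\{|S(n)|\ge|y|-\sqrt d\}$, a Fuk--Nagaev/truncation estimate — obtained by separating the first coordinate (which must stay positive and end in $[y_1,y_1+1)$, controlled by the one‑dimensional version of Corollary~\ref{Cmin}, with the case distinction $y_1\lesssim c_n$ versus $y_1\gg c_n$) from the remaining coordinates (which must cover a distance of order $|y|$ using only jumps below $\delta|y|$) — bounds this contribution by $CH(y_1+1)g(|y|)$; the point is that the probability of so large a deviation with no big jump decays in $|y|/c_n$ faster than the power $d+\alpha$ once $\delta$ is small. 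The event that a further jump exceeds $\delta|y|$ after time $\sigma$ I would treat by iterating the decomposition below, which produces an extra factor of order $\phi(\delta|y|)$ and is of strictly lower order.

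It remains to estimate, for each $k$, the probability that $\sigma=k$, there is no further big jump, $S(n)\in y+\Delta$ and $\tau>n$. Split the trajectory at time $k$: set $u=S(k-1)$, $v=X(k)$, and let $\widehat S$ be the walk of the last $n-k$ increments, so $u+v+\widehat S(n-k)\in y+\Delta$. Split further according to whether $|u|\le|y|/4$ and $|\widehat S(n-k)|\le|y|/4$. On the complement the pre‑ or post‑jump walk, whose increments are all below $\delta|y|$, must reach distance $|y|/4$; a Fuk--Nagaev bound gives probability at most $(n\phi(\delta|y|))^{1/(4\delta)}$, far below $g(|y|)$ for small $\delta$, so this part is negligible. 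On the main set put $w=\widehat S(n-k)$. The big jump is determined up to a unit cube by $u,w,y$, hence $\pr(X\in v+\Delta)\le g(|v|)=g(|y-u-w|)\le Cg(|y|)$ there; integrating $u$ over $|u|\le|y|/4$ (after dropping the harmless restriction on its increments) leaves at most $\pr(\tau>k-1)$; and the requirement that the path stay positive after the jump becomes, writing the post‑jump first coordinate as $y_1-w_1$ and reversing the increments of $\widehat S_1$, exactly $\{\max_{i\le n-k}\widetilde{S}_1(i)<y_1\}$ for the reversed walk $\widetilde{S}_1$ — an event which puts no constraint on the other coordinates and which, by standard one‑dimensional fluctuation theory, has probability at most $CH(y_1)\pr(\tau^+>n-k)$ (the ascending renewal function $H$ governs first passage above $y_1$; cf.\ \eqref{ren}, \eqref{integ1}). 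Thus the main part of $\{\sigma=k\}$ is at most $Cg(|y|)H(y_1)\pr(\tau>k-1)\pr(\tau^+>n-k)$, and summing over $k$,
\[
\sum_{k=1}^{n}\pr(\tau>k-1)\,\pr(\tau^+>n-k)=\sum_{j=0}^{n-1}\pr(\tau>j)\,\pr(\tau^+>n-1-j)=1,
\]
since $\bigl(\sum_{k\ge0}\pr(\tau>k)s^k\bigr)\bigl(\sum_{k\ge0}\pr(\tau^+>k)s^k\bigr)=(1-s)^{-1}$ by the Sparre Andersen identities. Hence this contribution is at most $Cg(|y|)H(y_1)\le Cg(|y|)H(y_1+1)$.

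Collecting the three contributions gives $b_n(y)\le C_1 H(y_1+1)g(|y|)+C_2(\delta,A)H(y_1+1)g(|y|)+C_3(\delta,A)H(y_1+1)g(|y|)$, where $C_1$ is an absolute constant coming from the main term while $C_2(\delta,A),C_3(\delta,A)\to0$ as $A\to\infty$ once $\delta$ is fixed small enough; fixing such $\delta$ and then a large $A$ and taking $\gamma=2C_1$ proves the lemma with $\gamma$ independent of $n$ and $y$. The step I expect to be the real work is the Fuk--Nagaev/truncation analysis of the $\{\sigma=\infty\}$ term and of the ``intermediate excursion'' remainder: one must show these are small \emph{relative to} $H(y_1+1)g(|y|)$, uniformly in $n$ and in $|y|>Ac_n$, which forces the separation of the first coordinate from the others and the case distinction $y_1\lesssim c_n$ versus $y_1\gg c_n$. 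The time‑reversal identity producing $\pr(\max_{i\le m}\widetilde{S}_1(i)<y_1)\le CH(y_1)\pr(\tau^+>m)$ together with the exact summation above are what make the prefactor $H(y_1+1)g(|y|)$ come out with the correct shape.
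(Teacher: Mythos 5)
Your route is genuinely different from the paper's: the paper proves \eqref{eq:heavy.main} by induction on $n$ through the Baxter--Spitzer recurrence \eqref{eq:40}, splitting the convolution integral into the regions $|z|\le|y|/2$, $\{|z|>|y|/2,\ |y-z|\le\widetilde Ac_k\}$ and $\{|z|>|y|/2,\ |y-z|>\widetilde Ac_k\}$ and closing the induction with a contraction estimate on the constants $\gamma_n$, whereas you use a one-big-jump decomposition. Your treatment of the main (exactly one big jump) term is correct and rather clean: bounding the jump density by $Cg(|y|)$, reversing the post-jump first coordinate to get $\pr(\max_{i\le m}\widetilde S_1(i)<y_1+1)\le CH(y_1+1)\pr(\tau^+>m)$, and summing via $\sum_{j=0}^{n-1}\pr(\tau>j)\pr(\tau^+>n-1-j)=1$ produces the prefactor $H(y_1+1)g(|y|)$ with no induction at all.

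The genuine gap is in the remainder terms ($\sigma=\infty$, more than one big jump, and the ``intermediate excursion'' events). You justify their smallness by the Fuk--Nagaev decay in $|y|/c_n$ alone (``decays faster than the power $d+\alpha$'', ``far below $g(|y|)$''), but this is not enough: in the regime $|y|\asymp Ac_n$ with $A$ fixed and $n\to\infty$, a bound of the form $\bigl(Cn\mu_2(\delta|y|)/|y|^2\bigr)^{c/\delta}\asymp (c_n/|y|)^{s}$ is a \emph{constant}, while the target $H(y_1+1)g(|y|)\asymp H(y_1+1)n^{-1}(c_n/|y|)^{\alpha}|y|^{-d}$ tends to $0$; even after inserting a concentration factor $c_n^{-d}$ for half the path one is still off by the factor $H(y_1+1)/n\asymp H(y_1+1)\pr(\tau>n)\pr(\tau^+>n)$. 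In other words, every remainder term must simultaneously yield (i) the Fuk--Nagaev factor $(c_n/|y|)^{s}$, (ii) the local factor $c_n^{-d}$, and (iii) the positivity factor $H(y_1+1)\pr(\tau>\cdot)\pr(\tau^+>\cdot)$ extracted exactly as in Lemma~\ref{lem:lem20}; this forces a splitting of $[0,n]$ into at least three or four blocks with the long excursion isolated in one of them, and the same issue recurs (and is not addressed at all) for the two-or-more-big-jumps term, where $n^2\phi(\delta|y|)^2$ times a concentration bound is again not $o(H(y_1+1)g(|y|))$ without the positivity gain. You flag part of this as ``the real work'' but the argument as written does not close; until that multi-block estimate is carried out the lemma is not proved by your method.
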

\begin{proof}
We will first introduce some constants and sequences that will be used throughout the proof. 
Set
\[ 
\rho_n = \frac{1}{n}\sum_{k=1}^n \pr(S_1(k)>0).
\]
Fix $\delta\in(0,1)$ such that 
    \begin{equation}\label{eq:delta.stable}
        \mathrm{e}^{\delta}\sup_{n\ge1}\rho_n+\delta \mathrm{e}^{\delta}<1
    \end{equation}    
 and let \(\widetilde A\) be such that 
    \begin{multline}\label{eq:A1.tilde.stable}
    \int_{|z|> \widetilde A c_k, |y-z|>|y|/2}
    g(|y-z|) g(|z|) dz\\
    +\int_{|y-z|> \widetilde A c_k, |z|>|y|/2}
    g(|y-z|) g(|z|) dz
    \le\frac{\delta}{C_H k}g(|y|)
    \end{multline}
    for $y$ with $|y|>1$ and \(k\ge 1\). 
    Let $A$ be such that 
    \begin{equation}\label{eq:A1.stable}
\sup_{n\ge 1}\sup_{y,z: |y|>Ac_n, |z|\le \widetilde A c_n+1}\frac{g(|y-z|)}{g(|y|)}
\le \mathrm{e}^\delta. 
    \end{equation}
    By Lemma~\ref{lem:heavy.prelim} there exist $c_A>1$ 
    such that~\eqref{eq:heavy.main}  
    holds for $y$ with $c_n\le |y|\le Ac_n$. 
    
    The proof will be done by  induction. 
    We will inductively construct an increasing sequence \(\gamma_n\) such that 
    \begin{equation}\label{eq:ind.gamma}
        b_n(y) \le \gamma_n H(y_1) g(|y|)
    \end{equation}
    for $y$ with $|y|>c_n$ and \(n\ge 1\). 
    Then we will show that \(\sup_n \gamma_n <\infty\). 
    We put \(\gamma_1=c_A\) and then the base of induction \(n=1\) is immediate. 
    Since \(\gamma_n\) will be increasing,  
    it follows from the definition of \(A\) that~\eqref{eq:ind.gamma} 
    holds for 
    $y$ such that  $|c_n|<y\le A c_n$. 
    Hence, we will consider only $y$ with $|y|>c_n$.

    Assume now that we have already constructed the elements \(\gamma_k\) for \(k\le n-1\). We shall construct the next value $\gamma_n$. 
   It follows from~\eqref{eq:40} that 
    \begin{align}
        \nonumber 
    nb_n(y+\Delta)&=\pr(S(n)\in y+\Delta)\\ 
    &\hspace{1cm}+\sum_{k=1}^{n-1} 
    \int_{\R^d}\pr(S(k)\in y-z+\Delta, S_1(k)>0) dB_{n-k}(z)
    \nonumber\\
    &=: R^{(1)}(y)+R^{(2)}(y)+R^{(3)}(y),
    \label{c1.heavy.stale}
    \end{align}
where
\begin{align*}    
    R^{(1)}(y)&:=\pr(S(n)\in y+\Delta)
    +\sum_{k=1}^{n-1} \int_{|z|\le |y|/2}\pr(S(k)\in y-z+\Delta, S_1(k)>0) dB_{n-k}(z),\\
    R^{(2)}(y)&:=
    \sum_{k=1}^{n-1} \int_{|z|>|y|/2,|y-z|\le \widetilde A c_k}\pr(S(k)\in y-z+\Delta, S_1(k)>0) dB_{n-k}(z),\\
   R^{(3)}(y)&:=\sum_{k=1}^{n-1} \int_{|z|> |y|/2, |y-z|>\widetilde A c_k}\pr(S(k)\in y-z+\Delta, S_1(k)>0) dB_{n-k}(z).
\end{align*}
Using first the local large deviations bound~\eqref{eq:local.ld} and 
then the regular variation of $g$, we get
\begin{align}
    \nonumber
    R^{(1)}(y)&\le C_H ng(|y|)  +
    C_H\sum_{k=1}^{n-1} \int_{|z|\le |y|/2, 0\le z_1\le y_1+1}
     k g(|y-z|) dB_{n-k}(z)\\
    \nonumber&\le C_H ng(|y|)  +
    C_g C_H g(|y|)\sum_{k=1}^{n-1} k \int_{|z|\le |y|/2, 0\le z_1\le y_1+1} dB_{n-k}(z)\\
    \label{eq:R1.stable}&\le (C_g+1)C_H ng(|y|)H(y_1+1).
\end{align} 
Second, integrating by parts and using then the definition~\eqref{eq:A1.stable} of $A$
 and  the induction assumption we obtain 
\begin{align} 
    \nonumber
    R^{(2)}(y)
    &\le
    \sum_{k=1}^{n-1} \int_{|y|>|z|/2,|y-z|\le \widetilde A c_k,
    0\le z_1\le y_1+1}
    \pr(S(k)\in y-z+\Delta, S_1(k)>0) dB_{n-k}(z)
    \\    
    \nonumber 
    &\le
    \sum_{k=1}^{n-1} \int_{|y-z|>|z|/2-1,|z|\le \widetilde A c_k+1,
    0\le z_1\le y_1+1}
    b_{n-k}(y-z)\pr(S(k)\in dz, S_1(k)>0) 
    \\    
    \nonumber 
    &\le
    \sum_{k=1}^{n-1} \int_{|y-z|>|z|/2-1,|z|\le \widetilde A c_k+1,
    0\le z_1\le y_1+1}
    \gamma_{n-1} g(|y-z|) H(y_1-z_1+1)\pr(S(k)\in dz, S_1(k)>0) 
    \\    
    &\nonumber 
   \le \mathrm{e}^\delta \gamma_{n-1} g(|y|)H(y_1+1)
   \sum_{k=1}^{n-1} \pr(S_1(k)>0)\\
   &= \mathrm{e}^{\delta} (n-1)  \gamma_{n-1} \rho_{n-1}
   g(|y|)H(y_1+1).\label{eq:R2.stable}
\end{align} 
Third, using the induction assumption and~\eqref{eq:local.ld.2},  
\begin{align*}
    R^{(3)}(y)
    &\le
    C_H\gamma_{n-1}\sum_{k=1}^{n-1} k \int_{|z|> |y|/2, |x-y|>\widetilde A c_k,0\le z_1\le y_1+1}
    g(|y-z|) g(|z|) H(z_1)dz \\
    &\le
    C_H\gamma_{n-1}H(y_1+1) \sum_{k=1}^{n-1} k 
    \int_{|z|> |y|/2, |y-z|>\widetilde A c_k,
    0\le z_1\le y_1+1}
    g(|y-z|) g(|z|) dz.
\end{align*}
We can estimate the integral as follows, 
\[
\int_{|y-z|> \widetilde A c_k, |z|>|y|/2}
    g(|y-z|) g(|z|) dz
   \le\frac{\delta}{C_H k}g(|y|), 
\] 
using the definition~\eqref{eq:A1.tilde.stable} of 
 $\widetilde A$. 
Hence, 
\begin{equation}\label{eq:R3.stable}
    R^{(3)}(y)\le 
    C_H\gamma_{n-1}H(y_1+1) 
    \frac{\delta}{C_H }g(|y|)(n-1)
    \le \delta \gamma_{n-1}\mathrm{e}^{\delta}H(y_1+1)  
    g(|y|)(n-1). 
\end{equation}
Combining~\eqref{eq:R1.stable}, \eqref{eq:R2.stable} and~\eqref{eq:R3.stable}
we obtain that 
\[
b_n(y)\le 
((C_g+1)C_H+
\gamma_{n-1}(\mathrm{e}^{\delta}\rho_{n-1}
+\delta \mathrm{e}^{\delta})
)g(|y|) H(y_1+1).
\]
Then, for 
\[
\gamma_n:=\max(\gamma_{n-1},(C_g+1)C_H+
\gamma_{n-1}(\mathrm{e}^{\delta}\rho_{n-1}
+\delta \mathrm{e}^{\delta})
)    
\]
the inequality~\eqref{eq:ind.gamma} holds.
Then, using the definition~\eqref{eq:delta.stable} of \(\delta \) 
it is not difficult 
to show that 
\[
\limsup_{n\to \infty}    \gamma_n<\infty.
\]
Hence, the statement of the lemma holds with
\[
\gamma:=\sup_n    \gamma_n<\infty. 
\]
\end{proof}



\section{Probabilities of large deviations when random walks start at 
an arbitrary starting point}

\begin{proof}[Proof of Theorem~\ref{thm.large.deviations}]  
Let 
\begin{align*} 
A(x,y)&=\left\{z: |y-z|\ge \frac{1}{2}|y-x|, z_1 \in  (0,x_1\wedge (y_1+1) ]  \right\}
\\
C(x,y)&=\left\{z: |x-z|\ge \frac{1}{2}|y-x|, z_1 \in  (0,x_1\wedge (y_1+1) ]  \right\}.
\end{align*}
If $|y-z|\ge \frac{1}{2}|y-x| $ then, by Lemma~\ref{lem:heavy.main.stable}, 
\[
b_{n-k}(y-z)\le \gamma_0 H(y_1-z_1+1)
g(|y-z|)
\le \gamma_0 H(y_1-z_1+1) g\left(\frac{|x-y|}{2}\right).
\]
This implies that
\begin{align*}
    &\sum_{k=0}^n 
    \int_{A(x,y)}
    p_k^+(0,dz-x)b_{n-k}(y-z)\\
    &\hspace{1cm}\le 
    \gamma_0 H(y_1+1) g\left(\frac{|x-y|}{2}\right) 
    \sum_{k=0}^n 
    \pr(x+S(k) \in A(x,y),\tau^+>k)
    \\
    &\hspace{1cm}\le 
    \gamma_0 H(y_1+1) g\left(\frac{|x-y|}{2}\right) 
    \sum_{k=0}^n 
    \pr(x_1+S_1(k) \in (0,x_1\wedge (y_1+1) ],\tau^+>k)\\ 
    &\hspace{1cm}\le \gamma_0 H(y_1+1) g\left(\frac{|x-y|}{2}\right) V(x_1). 
\end{align*}   
By the same argument,
\begin{align*}
    &\sum_{k=0}^n 
    \int_{C(x,y)}
    p_k^+(0,dz-x)b_{n-k}(y-z)\\
    &\hspace{0.5cm}\le
    \sum_{k=0}^n 
    \sum_{v\in \mathbb Z^d: 
    (v+\Delta)\cap C(x,y)\neq \emptyset
    }
    \pr(S(k)\in v-x+\Delta,\tau^+>k) 
    \max_{z\in v+\Delta}b_{n-k}(y-z)\\
    &\hspace{0.5cm}\le
    \sum_{k=0}^n 
    \sum_{v\in \mathbb Z^d: 
    (v+\Delta)\cap C(x,y)\neq \emptyset
    }
    \pr(S(k)\in v-x+\Delta,\tau^+>k) 
    \sum_{e_j\in \{0,1\}^d} 
    b_{n-k}(y-v-e_j)\\
    &\hspace{0.5cm}\le c 2^d 
    V(x_1+1)g\left(\frac{|x-y|}{2}-1\right)
    H(y_1+1). 
\end{align*}
These estimates give the desired bound.
\end{proof}    


\section{Asymptotics for the Green function near the boundary}

\begin{proof}[Proof of Theorem~\ref{thm.green.near.boundary}]
We consider the lattice case only.
    Fix $A>0$.  
    Then 
    \[
    G(x,y) =
    G_1(x,y)+G_2,(x,y):= 
    \sum_{n:Ac_n< |x-y|} p_n(x,y) 
    +\sum_{n:Ac_n\ge |x-y|} p_n(x,y). 
    \]
    Using Theorem~\ref{thm.large.deviations} 
    we obtain 
    \begin{align*} 
        G_1(x,y)&\le C 
        H(y_1)V(x_1) 
        \sum_{n:Ac_n< |x-y|} g(|x-y|) \\
        &\le 
        C H(y_1)V(x_1) 
        \frac{g(|x-y|)}{\mu(|x-y|/A)} \\ 
        &\le \frac{C}{A^{\alpha}}
        \frac{H(y_1)V(x_1)}{|x-y|^d}.
    \end{align*}    

    For the second term make use of Theorem~\ref{thm:small.deviations.arbitrary.lattice} 
    \begin{align*}
        G_2(x,y) 
        \sim  H(y_1)V(x_1)  
        \sum_{n:Ac_n\ge  |x-y|}  
        \frac{g_{\alpha,\sigma}(0, \frac{y_{2,d}-x_{2,d}}{c_n}) }{nc_n^d}
    \end{align*}    
    We will now analyse the series. 
    Using the regular variation of 
    $c_n$ we can write it as 
    \begin{align*}
        \sum_{n:Ac_n\ge  |x-y|}  
        \frac{g_{\alpha,\sigma}(0, \frac{y_{2,d}-x_{2,d}}{c_n}) }{nc_n^d}    
        &\sim 
        \sum_{n:n \ge  A^{-\alpha}/\mu(|x-y|)}  
        \frac{g_{\alpha,\sigma}(0, \frac{y_{2,d}-x_{2,d}}{c_n}) }{nc_n^d}\\ 
        &\sim
        \int_{A^{-\alpha}/\mu(|x-y|)}^\infty 
        \frac{g_{\alpha,\sigma}(0, \frac{y_{2,d}-x_{2,d}}{c_{[t]}}) }{tc_{[t]}^d}dt \\ 
        &\sim \int_0^{\mu(|x-y|)A^{\alpha}} 
        \frac{g_{\alpha,\sigma}(0, \frac{y_{2,d}-x_{2,d}}{c_{[1/z]}}) }{zc_{[1/z]}^d}dz\\ 
        &\sim 
        \frac{1}{|x-y|^d} 
        \int_0^{A^{\alpha}} 
        g_{\alpha,\sigma}\left(0, \frac{y_{2,d}-x_{2,d}}{|x-y|}z^{1/\alpha}\right) z^{d/\alpha-1}dz, 
    \end{align*}
    as $|x-y|\to \infty$.
 Here we used the fact that 
 since $c_n$ is regularly varying  
 of index $1/\alpha$, 
 for a fixed $z>0$, 
 \[
   c([1/(\mu(|x-y|)z)]) 
   \sim z^{-1/\alpha} 
   c([1/(\mu(|x-y|))]) 
   \sim z^{-1/\alpha} |x-y|,
 \]   
 as $|x-y|\to \infty$. 
Thus, in the general case,  
\[ 
G_2(x,y) 
\sim  C  \frac{H(y_1)V(x_1)}{|x-y|^d}  
\int_0^{A^{\alpha}} 
        g_{\alpha,\sigma}\left(0, \frac{y_{2,d}-x_{2,d}}{|x-y|}z^{1/\alpha}\right) z^{d/\alpha-1}dz. 
\]
Letting $A\to\infty$ and substituting $z^{1/\alpha}=t$ we arrive at the conclusion. 
Noting that 
    in the isotropic case the ration  
    $\frac{y_{2,d}-x_{2,d}}{\left| x-y \right|}$
    belongs asymptotically to the unit sphere,  
    we obtain the result in this case as well.

To obtain the upper bound~\eqref{eq.g.upper.bound} in the analysis of the second term 
we make use of 
Lemma~\ref{lem:lem20} instead of of Theorem~\ref{thm:small.deviations.arbitrary.lattice}.

\end{proof}

    \end{document}